\def\Z{\mathbb{Z}}
\newcommand{\bR}{{\mathbb R}}
\newcommand{\bZ}{{\mathbb Z}}
\newcommand{\cA}{{\mathcal A}}
\newcommand{\cZ}{{\mathcal Z}}
\newcommand{\cL}{{\mathcal L}}
\newcommand{\cN}{{\mathcal N}}
\newcommand{\cO}{\mathcal O}
\numberwithin{equation}{section}
\newcommand{\lep}{\displaystyle\;\mathlarger{\mathlarger{\triangleleft}}\;}
\newcommand{\gep}{\displaystyle\;\mathlarger{\mathlarger{\triangleright}}\;}
\newcommand{\ha}{\widehat a}
\newcommand{\hm}{\widehat m}
\newcommand{\bm}{\overline{m}}
\renewcommand{\r}{\rho}
\definecolor{ashgrey}{rgb}{0.7, 0.75, 0.71}
\definecolor{faint}{rgb}{0.9, 0.9, 0.9}
\definecolor{lgrey}{rgb}{0.8, 0.8, 0.8}
\definecolor{dgrey}{rgb}{0.5, 0.5, 0.5}
\newcommand{\gc}{\gamma_c}
\newtheorem{theorem}{Theorem}[section]
\newtheorem{lemma}[theorem]{Lemma}
\newtheorem{corollary}[theorem]{Corollary}
\newtheorem{prop}[theorem]{Proposition}
\newcommand{\prob}[1]{\mathbb{P}\left(#1\right)}
\newcommand{\expe}[1]{\mathbb{E}\left(#1\right)}
 \newcommand{\note}[1]{{\bf \textcolor{blue}
{[#1\marginpar{\textcolor{red}{***}}]}}}
\begin{document}

\begin{center}\Large

{\bf Two-dimensional supercritical growth dynamics with\\ one-dimensional nucleation}
\end{center}

 \begin{center}
 {\sc Daniel Blanquicett}\\
 {\rm Department of Mathematics}\\
 {\rm University of California, Davis}\\
{\rm Davis, CA 95616}\\
{\rm \tt drbt{@}math.ucdavis.edu}
 \end{center}

 \begin{center}
 {\sc Janko Gravner}\\
 {\rm Department of Mathematics}\\
 {\rm University of California, Davis}\\
{\rm Davis, CA 95616}\\
{\rm \tt gravner{@}math.ucdavis.edu}
 \end{center}

\begin{center}
{\sc David Sivakoff}\\
{\rm Departments of Statistics and Mathematics}\\
{\rm The Ohio State University}\\
{\rm Columbus, OH 43210, USA}\\
{\rm \tt dsivakoff{@}stat.osu.edu}
\end{center} 

\begin{center}
{\sc Luke Wilson}\\
{\rm Departments of Physics and Mathematics}\\
{\rm The College of Wooster}\\
{\rm Wooster, OH 44691, USA}
\end{center} 




\begin{abstract} 
We introduce a class of cellular automata growth models on the two-dimensional integer 
lattice with finite cross neighborhoods. 
These dynamics are determined by a Young diagram $\cZ$ and the radius $\r$ 
of the neighborhood, which we assume to be sufficiently large. A point becomes occupied 
if the pair of counts of currently occupied
points on the horizontal and vertical parts of the neighborhood lies outside $\cZ$.
Starting with a small density $p$ of occupied points, we focus on the 
first time $T$ at which the origin is occupied. We show that $T$ scales as a power 
of $1/p$, and identify that power, when $\cZ$ is the triangular set that 
gives threshold-$r$ bootstrap percolation, when $\cZ$ is a rectangle, 
and when it is a union of a finite rectangle and an infinite strip. We give partial results 
when $\cZ$ is a union of two finite rectangles. The distinguishing feature of these dynamics 
is nucleation of lines that grow to significant length before most of the space is covered. 
\end{abstract}

\section{Introduction}\label{sec-intro}

To quote \cite{Mar}, ``Nucleation
means a change in a physical or chemical system that begins within a small region.'' 
Nucleation is a key factor in determining self-organization properties of physical systems \cite{JD}.
Of particular interest for probabilists is homogeneous nucleation, which happens 
due to random fluctuations in a statistically homogeneous environment without preferred 
nucleation sites, such as impurities. Starting in the 1970s, simple models  were devised to study 
such processes \cite{CLR, AL}. 
In these models, a ``small region'' is one with 
a diameter much smaller than the time scale on which a new equilibrium is reached; in this sense, 
the nucleation is strictly local. In this paper, we study models in which this is violated.
Instead, the ``small region'' is now of a lower dimension than the growth environment. 
As we restrict to 
two-dimensional environments, this translates to growth models that generate extended one-dimensional 
tentacles long before the growth covers most of the space. While evidence of such growth 
in the physical literature is scarce, there is a recent discovery of efficient nucleation 
of lines in the assembly of two-dimensional molecular arrays \cite{Che}. 
 
Our model is a local version of the one we introduced in \cite{GSS}, 
a class of rules that can accommodate fairly general interaction between two 
possible directions. To ensure monotonicity, the key parameter in such rules is a Young diagram. 
The other parameter is a finite range, which ensures locality, and which we assume to be large 
enough, but otherwise plays a limited role at our level of precision.

To proceed with precise definitions, 
we call a set $\cZ\subseteq\bZ_+^2$, where $\bZ_+=\{0,1,2,\ldots\}$,  a {\it zero-set\/} if 
$(u,v)\in\cZ$ implies $[0,u]\times [0,v]\subseteq\cZ$. If $\cZ$ is finite, then it 
is equivalent to a Young diagram in the French notation; however, 
infinite zero-sets are also of interest and will still be referred to as Young diagrams. 
Sometimes we specify $\cZ$ by the {\it minimal counts\/}, which are those pairs 
$(u_0,v_0)\in \bZ_+^2\setminus\cZ$ 
for which both $(u_0-1,v_0)$ and $(u_0,v_0-1)$ are in $\cZ\cup (\bZ_+^2)^c$. The reason for this
terminology will be clear in the next paragraph.

As announced, we consider cellular automata dynamics with cross neighborhoods with {\it range\/}
$\rho\ge 0$. That is, for  $x\in \bZ^2$ we let $\cN^h_x=[-\rho,\rho]\times \{0\}$, 
$\cN^v_x=\{0\}\times[-\rho,\rho]$ and then 
the neighborhood of $x$ is 
$$
\cN_x=\cN_x^h\cup\cN_x^v.
$$
For each $t\in \bZ_+$, we let $\xi_t\subseteq \bZ^2$ denote the collection of \textit{occupied} (or \textit{active}) vertices at time $t$. Given $\xi_0$, we define $\xi_{t}$ recursively by
\begin{equation}\label{xi-rule}
    \xi_{t+1} = \xi_t \cup \{x\in \bZ^2 : (|\cN_x^h\cap \xi_t|,|\cN_x^v\cap \xi_t|)\notin \cZ\}.
\end{equation}
That is, for each unnoccupied point $x\in \bZ^2$ we compute the pair of 
counts of currently occupied points in the horizontal and vertical parts of its neighborhood, 
and then we add $x$ to the occupied set if this pair lies outside of $\cZ$. Thus we have 
{\it solidification\/}: $\xi_t\subseteq \xi_{t+1}$. 
The definition of a zero-set also ensures {\it monotonicity\/}: enlarging $\xi_0$ can only enlarge any $\xi_t$, 
$t\ge 0$. 
We will also consider such dynamics on a finite set $S$, with {\it $0$-boundary\/}, whereby we assume 
$\xi_0\subseteq S$ and only consider $x\in S$ for occupation at all times. The most useful 
such set is $B_n=[0,n-1]^2$, on which we also often 
impose periodic boundary conditions (see Section~\ref{sec-prelim}). 
We set $\xi_\infty =\cup_{t\ge 0}\xi_t$, and call a set $A\subseteq \bZ^2$ {\it inert\/} 
if $\xi_0=A$ implies $\xi_1=A$. 

Cellular automata growth dynamics with cross neighborhoods 
were introduced in \cite{HLR}, and indeed the model considered in that paper fits our definition. 
It is, however, a critical dynamics 
\cite{BSU, BDMS}, by contrast with the supercritical ones we consider here. In another direction, the recent papers
\cite{Bla1, Bla2, Bla3} study critical dynamics with $d$-dimensional 
versions of cross neighborhoods, for $d\ge 3$.

Without loss of generality, we assume that the height of the zero-set is no larger than the 
width. 
We will also assume that the height of $\cZ$ is finite and that it does not exceed $\rho$:
$\rho\ge |\cZ\cap(\{0\}\times \bZ_+)|$. When the width of $\cZ$ is finite, we also 
assume that it does not exceed $\rho$: $\rho\ge |\cZ\cap( \bZ_+\times\{0\})|$. 
(In fact, this is not a restriction: if width exceeds $\rho$, 
we can obtain the same process by replacing $\cZ$ with the new zero-set $\cZ'$  
that agrees with $\cZ$ except that the rows of $\cZ$ that exceed $\rho$ 
are made infinite in $\cZ'$.)
These constraints make our dynamics supercritical \cite{BSU, BBMS1}. 
Such dynamics are {\it voracious\/} \cite{GG} if every starting set of minimal cardinality selected
from
$$\cA=\{A\text{ finite}:\xi_0=A\text{ generates $\xi_\infty$ with }|\xi_\infty|=\infty\}
$$
results in $\xi_\infty=\bZ^2$. Nucleation properties of 
voracious dynamics are relatively transparent as they are determined by the 
minimal sets of $\cA$ \cite{GG}. 

However, unless 
$\cZ$ consists of a single point, our dynamics are {\it not\/} voracious. 
To see this, assume that $\cZ$ has height $h$, and that its width is at least $\max(h,2)$. 
If $|\xi_\infty|=\infty$, then $|\xi_0|\ge h$ (as a set of  
$h-1$ horizontal or vertical parallel lines is inert), but 
a vertical interval of $h$ sites generates 
an occupied vertical line, which is inert. Therefore, the results of \cite{GG} do not 
apply. Instead, nucleation is governed by most efficient 
configurations of occupied sites that generate lines that grow in different directions
and interact to finally produce a configuration that expands in all directions; see 
Figure~\ref{fig-sim}.
 
To study the nucleation properties, we assume that each $x\in \bZ^2$ is included in $\xi_0$ independently with probability $p\in [0,1]$, and investigate the scaling of 
$$
T = \inf\{t\ge 0 : (0,0)\in \xi_t\},
$$ 
the first time that the origin is occupied, as $p\to 0$. Besides $T$, 
another natural quantity is the {\it critical length\/} $L_c$~\cite{Mor}. We say $B_n$ is \textit{spanned} if the dynamics on $B_n$ with $0$-boundary eventually occupies every point of $B_n$. Then
$$
L_c := \inf\{n\ge 0 : \prob{B_n \text{ is spanned}}\ge1/2\}.
$$
The advantage of $T$, especially in the asymmetric cases, is 
that it imposes no symmetry restriction on the geometry of 
the domain on which nucleation events that affect occupation of the origin happen. 
 
The most studied special case of growth dynamics is known as {\it bootstrap percolation\/} \cite{CLR}
or {\it threshold growth\/} \cite{GG}. 
In our context, it is given by an integer {\it threshold\/} $r\ge 1$,
and the triangular zero-set 
$\cZ=\{(u,v):u+v\le r-1\}$.
Therefore, a site $x$ joins the occupied set whenever the number of 
currently occupied sites in $\cN_x$ is at least $r$.  Bootstrap percolation was introduced on 
trees in \cite{CLR} and has been since extensively studied on various graphs, with 
many deep and surprising results, beginning with early papers \cite{vE, AL}. Particularly impressive 
are results on $\bZ^d$; for some  of the highlights, see \cite{Hol, HLR, BBDM, HMo, BDMS}, recent 
papers \cite{BBMS1, BBMS2}, and survey \cite{Mor}, which contains a wealth of further references. Analysis 
of bootstrap percolation on graphs with longer range connectivity,
related to the present setup, is more recent. It was introduced in \cite{GHPS} and 
further explored in \cite{Sli, GSS, GS1, GS2}. 

Another special case is {\it line growth\/}, with finite rectangular zero-set 
$\cZ=[0,r-1]\times [0,s-1]$, where $1\le s\le r$. These dynamics were introduced 
 as {\it line percolation\/} \cite{BBLN, GSS} on Hamming graphs (which have $\rho=\infty$, i.e., the neighborhood is an infinite cross in 
both directions). On Hamming graphs, these dynamics have the property that 
any point gets occupied together with an entire line through it, which is of great help 
in the analysis. It turns out that this property approximately holds in the local version 
of the present paper and yields our main results. On the other hand, we suspect, 
and are able to prove in one case, logarithmic corrections to the power laws when $r=s>1$, 
which have no counterpart on the Hamming plane and are somewhat surprising for 
symmetric supercritical rules. See Figure~\ref{fig-sim} for simulations 
of a bootstrap percolation and a line growth dynamics.

\begin{figure}
\begin{center}
\includegraphics[scale=0.3]{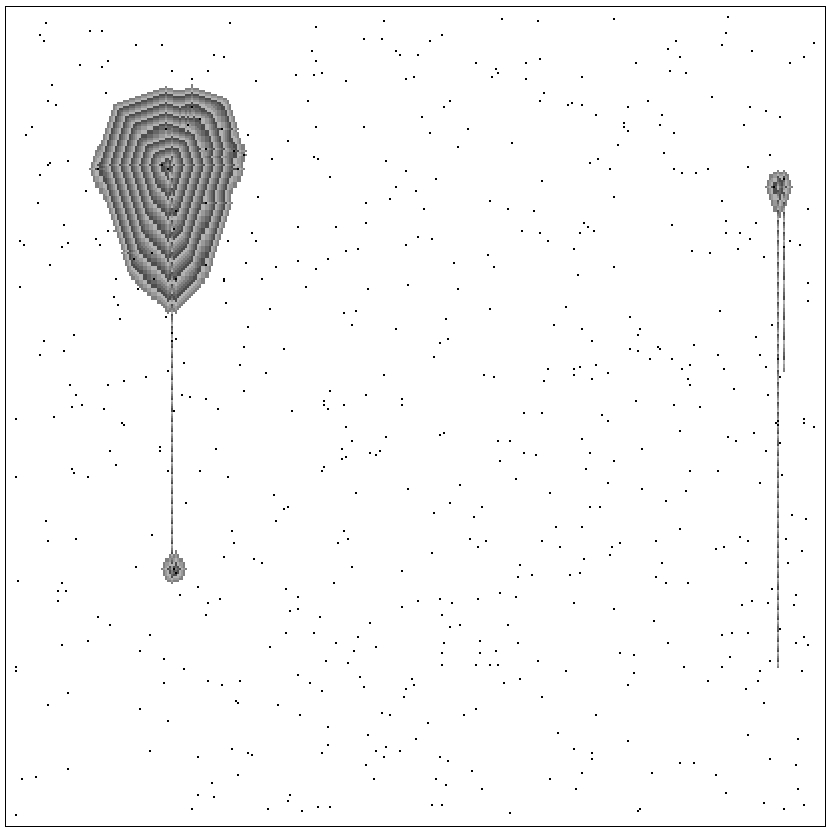}\hskip0.2cm\includegraphics[scale=0.3]{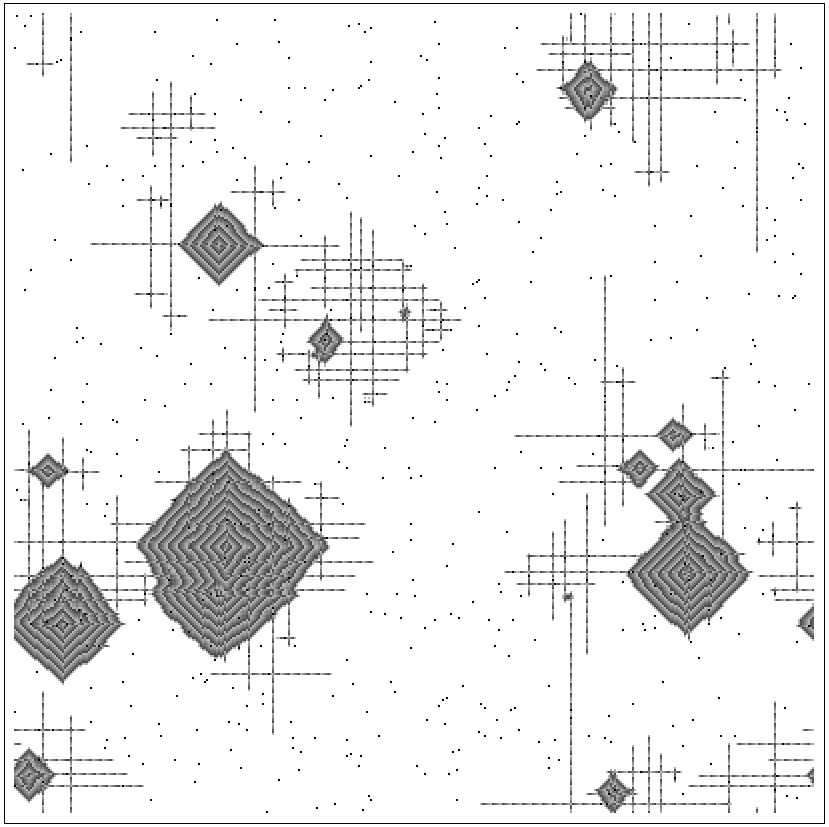}
\end{center}
\caption{Simulation of bootstrap percolation with $r=3$ with $\rho=3$ at $p=0.003$ (left) and 
symmetric line growth with $r=2$ with $\rho=2$ at $p=0.002$ (right). The still-frames are taken 
at the time when large occupied sets are about to quickly  take over the space. Sites occupied at
time $t>0$ are colored periodically with shades of grey.}
    \label{fig-sim}
\end{figure}

Assume that $a$ is a deterministic sequence depending on $p$ and $X$ is a
sequence of nonnegative random variables depending on $p$. We write:
\begin{itemize}
\item
 $X\lesssim a$ if 
$\lim_{\lambda\to \infty}\limsup_{p\to 0} \prob{X\ge \lambda a}= 0$; 
\item $X\gtrsim a$ if 
 $\lim_{\lambda\to 0}\limsup_{p\to 0} \prob{X\le \lambda a}=0$; and 
 \item $X\asymp a$ if 
 $X\lesssim a$ and $X\gtrsim a$. 
 \end{itemize} 
 In this sense, $X\asymp a$ means that $X=\Theta(a)$ 
 in probability.
 
 We call $\gamma>0$ a {\it lower power\/} (resp., an {\it upper power\/}) for $\cZ$
if, for 
every $\epsilon>0$, $T\gtrsim p^{-(\gamma-\epsilon)}$ (resp., $T\lesssim p^{-( \gamma+\epsilon)}$).
 Then $\gc=\gc(\cZ)$ is the {\it critical power\/} for the zero-set $\cZ$
if it is both an upper and a lower power. 
It follows from~\cite{BSU} (see also \cite{BBMS1}) that finite lower and upper powers always exist 
under our assumption on $\cZ$ and $\rho$ that guarantee supercriticality. 
However, there seems to be no general method that would prove that 
$\gc$ always exists, and even good inequalities may be very 
difficult if not impossible to obtain in general; see \cite{BBMS2, HMe} for discussion on 
related computational problems. 
Our methods demonstrate the existence of $\gc$ only
when it is possible to compute it exactly, which includes many small zero-sets; see Section~\ref{sec-examples}.


Existence of a critical power still allows for, say, logarithmic corrections in the scaling of $T$, so it 
is meaningful to ask whether such corrections are absent. 
We call the critical power $\gc$ {\it pure\/} if $T\asymp p^{-\gc}$. We now proceed to 
stating the main results.

%
 
 \begin{theorem}\label{thm-bp}
  Assume bootstrap percolation $\cZ$ with threshold $r\ge 1$. 
Let
$$
\hm=\left\lceil\frac{\sqrt{9+8r}-5}{2}\right\rceil.
$$
Then the pure critical power is
\begin{align*}
\gamma_c=\frac{(\hm+1)(2r-\hm)}{2(\hm+2)}.
\end{align*}
\end{theorem}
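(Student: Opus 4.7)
The plan is to establish matching bounds $T \lesssim p^{-(\gc+\eps)}$ and $T \gtrsim p^{-(\gc-\eps)}$ for every $\eps>0$, and then upgrade to purity. The central mechanism is \emph{cascaded nucleation of close parallel lines}: once $k$ close parallel horizontal lines already exist, a new parallel horizontal line can be nucleated from only $r-k$ fresh seeds within a single $\rho$-window on the target row, because those $k$ lines already contribute $k$ to the vertical count at every point on that row. Iterating, a cascade of $k$ close parallel lines costs $r+(r-1)+\dots+(r-k+1)=k(2r-k+1)/2$ seeds in a prescribed combinatorial pattern, and it is the rate at which the dominant cascade appears in space that will determine $T$.

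Step 1 (upper bound). In a box of side $L$ around the origin, a careful count shows that the expected number of $k$-tuples of successively-supported parallel lines behaves like $\Theta(L^{k+1}\, p^{\,k(2r-k+1)/2})$: there are $\Theta(L^2)$ choices for the base line (row plus cluster position) and $\Theta(L)$ choices for each subsequent line (close row fixed up to $\rho$ choices, free cluster position), each weighted by the corresponding seed probability. Setting this to $\Theta(1)$ gives the critical box size $L\asymp p^{-\gamma(k-1)}$ with $\gamma(m)=(m+1)(2r-m)/[2(m+2)]$. The binding choice of $k$ maximizes $\gamma(k-1)$; since $\gamma(m+1)-\gamma(m)$ has the same sign as $2r-(m+1)(m+4)$, the maximizer is exactly $k=\hm+1$, producing $L\asymp p^{-\gc}$. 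Once these $\hm+1$ cascaded horizontal lines are present, the remaining perpendicular lines needed to close the triangle $u_0+v_0\ge r$ nucleate with high probability in the ambient randomness within additional time $O(L)$, and deterministic rectangular growth at speed $1$ then reaches the origin in time $O(L)$; a sprinkling or second-moment argument over disjoint nearby boxes promotes existence to a high-probability statement.

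Step 2 (lower bound). For the matching lower bound I would run a hierarchical/peeling argument in the spirit of the Holroyd-type proofs for bootstrap percolation \cite{Hol, BBDM, BSU, BBMS1}, adapted to the cross neighborhood and triangular $\cZ$. If $(0,0)\in \xi_T$, tracing back the occupation history inside a space-time box of radius $\Theta(T)$ around the origin must produce, at some intermediate scale, a $k$-tuple of successively-supported parallel lines of the form used in Step 1. Since such a $k$-tuple requires $L\gtrsim p^{-\gamma(k-1)}$ for its expected count to be bounded below, and $\gamma(k-1)\le \gc$ with equality only at $k=\hm+1$, the maximum over $k\in\{1,\dots,r\}$ combined with a union bound over positions and scales yields $T\gtrsim p^{-(\gc-\eps)}$.

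The main obstacle is Step 2. The triangular shape of $\cZ$ forces the peeling to track horizontal and vertical occupation counts \emph{separately} at each scale, rather than aggregating them into a single threshold count as in standard symmetric bootstrap percolation; this demands delicate combinatorial bookkeeping to rule out shortcuts where partially-formed droplets interact to reduce the effective cost below $p^{\gc}$. Obtaining purity (the sharper scaling $T\asymp p^{-\gc}$ rather than only $T=p^{-\gc+o(1)}$) further requires a second-moment calculation at the critical scale showing that the number of completed droplets within reach of the origin concentrates around its mean, which is the most delicate technical step.
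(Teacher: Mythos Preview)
Your upper bound is essentially the paper's argument: the ``packed strip'' of Section~\ref{sec-bp-upper} is exactly your cascade of $\hm+1$ successively-supported parallel lines, followed by the remaining $r-\hm-1$ easy lines (which are \emph{parallel}, not perpendicular --- once the full strip of $r$ close horizontal lines is occupied, it spreads vertically by the rule). No second moment or sprinkling is needed, since disjoint strips in $B_n$ give $\Theta(n)$ independent trials.

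Your lower bound, however, is a sketch without a working mechanism, and the mechanism you name is the wrong one. The paper does \emph{not} use a Holroyd-type hierarchical/peeling argument; that machinery is tuned to critical dynamics with exponential scales, and there is no reason why tracing back the history of $\mathbf 0$ must isolate a clean $k$-tuple of cascaded lines when horizontal and vertical partial structures can interact. Instead the paper runs a deliberately generous comparison dynamics on the torus $B_n$ with $n\ll p^{-\gamma_c}$: whenever a white vertex sees $r$ coloured neighbors, its entire row \emph{and} column are painted, and at the end of each round the new lines become blue. The key object is the count $L_m^{(\ell)}$ of $m$-clumps ($m$ blue parallel lines within distance $3\rho$) after round~$\ell$. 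The heart of the argument is an induction (Lemma~\ref{lemma-bp-lb-key}) showing $L_m^{(\ell)}\lep\beta(m):=\gamma_c(m+1)-m(2r-m+1)/2$: a new $m$-clump comes from an existing $a$-clump by adding $m-a$ lines, each helped by some perpendicular $b_i$-clump, and one optimizes over $a$ and the $b_i$. The monotonicity $\beta(m)+m\le\gamma_c$ for $m\le\hm$ (Lemma~\ref{lemma-bp-lb-dec}) forces the optimum at $b_i=0$, $a=m-1$, returning exactly $\beta(m)$. Since $\beta(\hm+1)=0$, the comparison dynamics halts after a bounded number of rounds with $o(n^2)$ coloured sites, and the speed-of-light bound on the torus (Lemma~\ref{lemma-lb-easy}) gives $T\ge n/(2\rho)$ with high probability. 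The interaction shortcuts you correctly flag as the obstacle are handled precisely by this clump optimization, not by a union bound over histories. Purity then falls out for free: both bounds are already sharp at scale $p^{-\gamma_c}$, with no second-moment step.
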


We note that the upper bounds implied by Theorem~\ref{thm-bp} play a role
in the determination of the critical length for three dimensional critical bootstrap percolation processes (see Proposition 1.2 in \cite{Bla2}), so our main contribution are the matching lower bounds.

 \begin{theorem} \label{thm-line}
 Assume line growth $\cZ$ with parameters $2\le s \le r$. 
Then the critical power is
\begin{align*}
\gamma_c=\frac{(r-1)s}{r}.
\end{align*}
\end{theorem}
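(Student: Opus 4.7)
The plan is to prove Theorem~\ref{thm-line} by establishing matching upper and lower bounds, $T \lesssim p^{-(r-1)s/r}$ and $T \gtrsim p^{-(r-1)s/r}$. The guiding principle, borrowed from the Hamming-plane analyses in \cite{GSS, BBLN}, is that the efficient nucleation for this family of dynamics consists of $r-1$ vertical-line seeds placed within a common horizontal window: this uses $(r-1)s$ initial points and has $r$ free spatial degrees of freedom, matching the claimed scaling $T \asymp p^{-(r-1)s/r}$.

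For the upper bound I would construct a nucleation-and-growth mechanism built around $r-1$ vertical seeds---that is, $r-1$ columns each containing $s$ initial points within a vertical window of size $2\rho+1$---all sharing a single horizontal window of width $\rho$. By monotonicity, each seed deterministically grows into a full vertical line, so every row within that horizontal window accumulates $r-1$ occupied points from these lines. The ambient initial density $p$ typically provides extra initial points in each row-strip at the relevant scale, so many rows acquire $\geq r$ horizontal neighbors and trigger horizontal nucleation. The resulting horizontal lines then combine (via the vertical threshold $s$) to trigger further vertical nucleation, and iterating this cascade propagates activation through a neighborhood that eventually covers the origin. A careful counting in a box of side $T_0 \sim p^{-(r-1)s/r}$ shows that the expected number of such seed configurations is $\Theta(1)$, and one must then verify that the cascade reaches the origin within the same time scale (using the growth-speed bounds implicit in \cite{BSU}).

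For the lower bound I would argue via causal-history analysis and a union bound. Assuming origin activates at time $T_0$, one traces back a causal forest whose internal nodes represent activation events requiring $\geq r$ horizontal or $\geq s$ vertical previously-occupied neighbors. Using an approximate line-structure lemma modeled on the Hamming case, I will show that every such forest must encode at least $(r-1)s$ distinct initial points in a configuration with positional entropy $O(T_0^r)$ (modulo polynomial factors in $\rho$). A union bound then gives
\[
\prob{T \le T_0} \leq C\, T_0^r \, p^{(r-1)s},
\]
which tends to $0$ when $T_0 = \lambda p^{-(r-1)s/r}$ and $\lambda \to 0$, establishing $T \gtrsim p^{-(r-1)s/r}$.

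The hard part will be the lower bound. The upper bound requires only an efficient ansatz and a growth estimate, whereas the lower bound must rule out every possible causal pathway---including those that intricately mix horizontal and vertical nucleations across multiple scales. The crux is making the heuristic ``every occupied point comes with an entire line through it'' rigorous in the local setting: every efficient activation path should reduce, up to constants and polynomial $\rho$-factors, to the canonical $(r-1)$-seed configuration used in the upper bound. Establishing this rigorously likely requires an inductive cost-accounting on the causal forest together with a coarse-graining that replaces each occupied site by its generating line, in the spirit of the Hamming-plane analyses.
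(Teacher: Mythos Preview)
Your intuition that the optimal mechanism involves $(r-1)s$ initial points with $r$ positional degrees of freedom is correct and matches the answer, but the proof plan has gaps on both sides.

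\textbf{Upper bound.} The single-seed cascade you describe does not propagate at scale $T_0\sim p^{-\gamma}$. From $r-1$ occupied vertical lines in a common window you obtain about $T_0 p$ horizontal lines, but these have density $\Theta(p)$ among the rows, so the expected number of $s$-clumps of horizontal lines is $\Theta(T_0 p^s)=\Theta(p^{s/r})\to 0$; mixing in additional black points does not change the order. Hence the cascade stalls after one round. The paper's argument for $s<r$ (Lemma~\ref{lemma-line-asym-upper} via Lemma~\ref{lemma-asym-upper-ind}) is different: it takes $n\gg p^{-(\gamma+\epsilon)}$ and iterates a bounded number $\hat\ell\sim s/(r^2\epsilon)$ of rounds in which the counts $V_{r-1}^{(\ell)}$ and $H^{(\ell)}$ each gain a power $r\epsilon$ per round, bootstrapping from the stable quantity $V_{r-2}^{(1)}\gep s/r$. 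This only yields the critical power, not purity---which is all the theorem claims. The symmetric case $s=r$ needs a separate branching-process construction (Lemma~\ref{lemma-line-sym-upper}) that produces a $\log(1/p)$ correction.

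\textbf{Lower bound.} The bound $\prob{T\le T_0}\le CT_0^r p^{(r-1)s}$ is the right target, but your ``causal forest plus union bound'' does not explain how to control forests of unbounded depth: each additional round of line-formation introduces both extra black points and extra entropy, and you must show these balance so that no multi-round scheme beats the one-round $(r-1)$-seed scheme. The paper does this not via causal forests but by tracking the numbers $V_m^{(\ell)}$, $H_m^{(\ell)}$ of vertical and horizontal $m$-clumps after $\ell$ rounds of alternating line-occupation on a periodic box. The key computation (Lemma~\ref{lemma-asym-lower-ind}) is an optimization showing that the increments satisfy $\Delta V_m^{(\ell)}\lep \beta(m)+(\ell-1)(s/r-1)$ with $\beta(m)=\gamma-ms/r$; since $s/r<1$ this decays in $\ell$, so after a bounded number of rounds no new lines appear and the final occupied set has density $o(1)$. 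The optimization itself---over the number $b$ of existing parallel lines and the sizes $a_i$ of helping perpendicular clumps---is where the ``every scheme reduces to the canonical one'' heuristic is made rigorous, and it is the substantial part of the proof that your proposal leaves unspecified.
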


Line growth with $1=s\le r$ is different: the critical power is $r/(r+1)$ and is pure 
(see Proposition~\ref{prop-s=1}).
In general, we are not able to determine purity for line growth with $s\ge 2$. We can, however, 
demonstrate that the critical power is not pure in one case. Also, to our knowledge, this is the only symmetric growth rule for which the scalings of $T$ and $L_c$ provably differ.

\begin{theorem} \label{thm-liner2}
Assume line growth $\cZ$ with parameters $r=s=2$. 
Then 
\begin{align*}
T\asymp p^{-1}\log p^{-1},
\end{align*}
and $L_c \asymp p^{-1}$.
\end{theorem}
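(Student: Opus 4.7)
The plan exploits the natural branching dynamics of horizontal and vertical lines, which are the relevant intermediate-scale objects for $r=s=2$. The key structural facts are: (i) two initial points in the same row within horizontal distance $\rho$ seed an infinite horizontal line that grows at unit speed, and analogously for columns; (ii) a horizontal line at row $a$ together with a single initial point at $(c,a')$ with $0<|a-a'|\le\rho$ nucleates a vertical line at column $c$ as soon as the horizontal line sweeps past $c$; (iii) a single horizontal and a single vertical line merely crossing does not propagate further; (iv) two parallel lines within perpendicular distance $\le\rho$ fill in the strip between them and thereafter grow at unit speed in every direction, forming a \emph{blob}. By (i)--(iv), each nucleated line spawns perpendicular lines at rate $\asymp p\rho$ per unit length swept, producing a branching tree with expected descendant count $\asymp e^{cp\rho t}$ by time $t$ (for a constant $c>0$), concentrated in a box of side $\asymp t$, so the local line density inside the cluster is $\asymp e^{cp\rho t}/t$. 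The expected number of descendant lines covering the origin, summed over all nucleations in the plane, is therefore $\asymp tp^2\rho\,e^{cp\rho t}$, which reaches $\Theta(1)$ exactly at $t\asymp p^{-1}\log p^{-1}$.

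For $L_c\lesssim p^{-1}$, I would take $L=Cp^{-1}$ on the torus $B_L$ with $C$ large; with probability close to $1$, the box contains many initial line-nucleating pairs, each of which produces a line that wraps around the torus and spawns a cascade of perpendicular lines. After $O(1)$ generations of the branching, a birthday argument forces two parallel lines to lie within distance $\rho$ of one another, igniting a blob that spans $B_L$. For $T\lesssim p^{-1}\log p^{-1}$, I would condition on an initial horizontal pair within distance $O(p^{-1})$ of the origin---an event of probability bounded below by a second-moment estimate, since its expected count is $\Theta(1)$---and apply a concentration argument to the number of spawned descendant lines through the origin: the first moment is much larger than $1$ by time $Cp^{-1}\log p^{-1}$ for $C$ large, while independence across disjoint ``generations'' of participating initial points keeps the variance sufficiently controlled.

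For $L_c\gtrsim p^{-1}$, the expected number of initial pairs at row- or column-distance $\le\rho$ inside a box of side $cp^{-1}$ is $\asymp c^2\rho$, and for $c$ small enough the probability of zero such pairs is bounded away from $0$; in that event the dynamics on the box is inert and $B_{cp^{-1}}$ is not spanned. The main difficulty is the lower bound $T\gtrsim p^{-1}\log p^{-1}$, which I would prove by a first-moment count over possible \emph{ancestry trees} capable of occupying the origin by time $t$. Each tree is rooted at an initial nucleating pair (contributing a factor $p^2\rho$) and extends through $k$ alternating perpendicular spawnings (contributing a factor of order $p$ per spawn, with $O(\rho)$ positional choices per level and an overall time-ordering factor $1/k!$). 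Summing over $k$ and integrating the root over the ball of radius $t$ around the origin yields a total expected count $\asymp tp^2\rho\,e^{C'p\rho t}$; this is $o(1)$ whenever $t\le c'p^{-1}\log p^{-1}$ for a sufficiently small constant $c'$.

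The main obstacle is making the ancestry-tree counting rigorous for the lower bound on $T$: because a single initial point can participate in multiple spawnings and different spawned lines can overlap geometrically, the cascade is not literally a tree of independent events. I would resolve this by assigning each line in the ancestry a canonical triple of distinguished initial points (the two endpoints of its nucleating pair and the single initial point responsible for the perpendicular spawn) and insisting that these triples be vertex-disjoint across the tree, while absorbing residual overlap contributions into a cruder combinatorial bound. A secondary technical point is to dispose of the alternative ``fast'' mechanism in which four initial points form two already-close parallel pairs and ignite a blob at time $0$; this contributes only probability $\asymp p^4\rho^4 t^2$ at radius $t$, corresponding to the harmlessly larger time scale $\asymp p^{-2}$, and is handled by a separate first-moment bound.
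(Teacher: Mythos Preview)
Your heuristic picture matches the paper's: perpendicular lines spawn each other at rate $\Theta(p)$ per unit length, yielding a branching structure of expected size $e^{\Theta(pt)}$, with crossover at $t\asymp p^{-1}\log p^{-1}$. The $L_c$ arguments are essentially the paper's. Your upper bound for $T$ is in the right spirit, though the paper makes it rigorous via an explicit branching/marking construction rooted at the \emph{row of the origin} followed by a sprinkling step (Lemma~\ref{lemma-line-sym-upper}), rather than by conditioning on a nearby pair and appealing to an unspecified concentration argument.

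The genuine gap is in the lower bound for $T$. You correctly identify that the ancestry of the origin, traced back through spawnings, gives a first-moment bound of the form $tp^2 e^{Cpt}$, which is $o(1)$ for $t=cp^{-1}\log p^{-1}$ with $c$ small. But this bound is only valid when the ancestry \emph{is} a chain, i.e.\ when no two parallel occupied lines have come within distance $\rho$ of one another. You dispose of the time-$0$ version of this event (two initial close pairs), but two parallel lines can collide at any positive time via the cascade itself: a horizontal line $L$ spawns a vertical line $V$, and then a single black point adjacent to $V$ at a row within $\rho$ of $L$'s row spawns a second horizontal line parallel and close to $L$. Once this occurs a blob forms, growth becomes ballistic in all directions, and the ancestry-chain picture no longer describes how the origin gets occupied; your first-moment count then says nothing about $\P(T\le t)$. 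Your ``vertex-disjoint triples'' fix addresses dependence along a single chain, not this event.

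The paper (Lemma~\ref{lemma-line-lower-2}) handles exactly this. Step~1 shows that \emph{if} no two parallel saturated lines are within distance $5\rho$ by time $t$, then occupation of the origin forces a \emph{transmissive path} (your ancestry chain, made precise) with distinct black sites at each turn; the no-close-parallel-lines hypothesis is what makes the inductive construction of the path go through and what forces distinctness of the black sites. Steps~3 and~4 then separately bound the probability that two parallel saturated lines ever become close: each such line has its own transmissive path, and the two paths either do not merge (Step~3, essentially a product of two chain bounds) or do merge at some turn (Step~4, which requires an additional estimate on paths with \emph{both} endpoints prescribed). Step~4 is the most delicate piece of the whole argument and has no counterpart in your proposal.
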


Next, we consider L-shaped Young diagrams, for which we can only determine $\gamma_c$ in 
special cases, and we instead give power bounds in general.


\begin{theorem}\label{thm-L-finite}
Assume $\cZ$ is given by the minimal counts $(0,r)$, $(s_1,s_2)$, and $(r,0)$, and assume
$1\le s_1,s_2<r$. 
 
\begin{enumerate}
\item If $s_1=1$ and $s_2=s$ with $s\le r/2$, the pure critical power is $\gamma_c=r/2$.
\item If $s_1=s_2=2$ and $r\ge 6$, the pure critical power is $\gamma_c=2r/3$.
\item If $s_1=s_2=s$, a lower and an upper power are, respectively,
$$\gamma_\ell=
 \frac{\hm(r-\hm+1)}{1+\hm}, \text{ where }\hm=\min(\lfloor(-1+\sqrt{4r+9})/2\rfloor,\lfloor s/2\rfloor),
$$
and
$$
\gamma_u= \frac{rs}{s+1}.
$$ 

\item Assume $s_1=1$ and $s_2=s$. 
For $s>r/2$, 
we have lower and upper power $s-\frac{s}{r}$ and  $s+1-\frac{2s+1}{r+1}$, which differ 
by at most $1/2$. 
\end{enumerate}
\end{theorem}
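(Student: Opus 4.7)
The proof splits into the four parts, which share a common observation: because both $(0,r)$ and $(r,0)$ appear as minimal counts, a fully occupied horizontal or vertical line of length $\rho$ is inert in isolation, so the third minimal count $(s_1,s_2)$ is the sole channel for bi-directional growth. In the presence of a perpendicular line contributing $s_1$ (or $s_2$) occupied neighbors in one direction, we need only $s_2$ (or $s_1$) additional occupied neighbors in the other to trigger occupation. Thus the L-shaped dynamics behaves like \emph{cheap line growth supported by a perpendicular seed line}, and the efficient nucleation strategies should first nucleate a line using threshold-$r$ growth and then cascade perpendicular growth via the $(s_1,s_2)$ interaction.

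For the upper bounds, I plan to construct explicit nucleation templates for each part. In part (1), the template pairs a short horizontal seed generating a threshold-$r$ line with a sparse vertical stack of $s$-groups that each trigger a new occupied row once the line passes; in part (2), the template uses mutually-supporting corners exploiting the symmetric $s_1=s_2=2$ setup; parts (3) and (4) use analogues adapted to the larger $s$. The probability of a template can be estimated as a product of local independent events, and I will show the expected number in a space-time cylinder of side $p^{-\gamma-\epsilon}$ is large, yielding $T\lesssim p^{-\gamma-\epsilon}$ by monotonicity and a standard covering argument, modeled on the corresponding steps in Theorems~\ref{thm-bp} and~\ref{thm-line}.

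For the lower bounds, I plan to adapt the hierarchical pod/renormalization argument used earlier in the paper. A block of side $L$ is \emph{internally nucleated} if $\xi_0$ restricted to it contains enough seed to fill it under an appropriate boundary-assisted closure. I bound this probability by induction over scales: a block can be nucleated either via a threshold-$r$ line-nucleation within it, or via two perpendicular smaller nucleated sub-blocks whose $(s_1,s_2)$-interaction then propagates. Carefully tracking the additional seed required by an L-interaction beyond the two intersecting lines is what drives the critical exponent.

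The main obstacle is part (3), where the gap between $\gamma_\ell$ and $\gamma_u$ appears to reflect a genuine difficulty rather than technical looseness: for intermediate $s$, the optimal nucleation may be neither pure threshold (giving $\gamma_\ell$) nor pure line (giving $\gamma_u$) but a hybrid whose exact optimization seems substantially more delicate. I will therefore prove only the stated bounds in part (3). In parts (1), (2), and (4), a single mechanism clearly dominates; the residual gap of at most $1/2$ in part (4) reflects a discrete optimization constraint rather than method loss, which I will verify by elementary algebra.
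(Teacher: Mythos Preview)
Your upper-bound plan via explicit nucleation templates is on the right track and broadly matches the paper's constructions. The genuine gap is in the lower bounds. You propose a ``hierarchical pod/renormalization argument used earlier in the paper,'' with internally nucleated blocks and induction over scales, but no such argument appears anywhere in this paper: that technology belongs to \emph{critical} bootstrap percolation and does not fit supercritical dynamics whose nucleation is one-dimensional. A block-hierarchy gives no evident control over long thin saturated lines that reach far outside any fixed-scale block.

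The paper's lower bounds proceed quite differently. For parts (1)--(3) (Lemmas~\ref{lemma-L1}--\ref{lemma-L3}) one calls a line \emph{saturated} once it holds $r$ occupied sites in an interval of mesoscopic length $M$, and introduces a stopping time $\tau$ at the first appearance of a prescribed number ($1$, $2$, or $m$) of nearby parallel saturated lines. The key trace-back is that before $\tau$, any newly occupied site on such a line must, via the $(s_1,s_2)$ minimal count, have enough perpendicular occupied neighbors, which one follows back to black sites within distance $O(r\rho)$; hence $\{\tau<\infty\}$ forces a dense local configuration of black sites whose probability is bounded directly by a union bound. In part (2) this trace-back is delicate because a previously saturated perpendicular line may supply one saturation site, and the resulting case analysis (lines $L_1,L_2$ with possible helpers $L_3,L_4$) is precisely where the hypothesis $r\ge 6$ enters, through the inequalities $2r/3\le r-1$, $8r/3\le 3r-2$, $10r/3\le 4r-4$. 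For part (4), and for the lower bound in part (1), the paper does not argue directly at all: it uses monotone comparison, inheriting the lower power $r/2$ from Lemma~\ref{lemma-L1} (the zero-set contains the thin L with corner $(1,1)$) and the lower power $s-s/r$ from Lemma~\ref{lemma-line-asym-lower} (the zero-set contains the $r\times s$ line-growth rectangle). Your proposal has neither the saturation/stopping-time mechanism nor these comparison shortcuts, and without them the lower bounds are not established.
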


Assume the zero set $\cZ$ has height and width $r$, and $r$ is large. If $\cZ$ is between 
bootstrap percolation and symmetric line growth with the same threshold $r$, Theorems~\ref{thm-bp} 
and~\ref{thm-line} give lower and upper powers of the form $r-\cO(\sqrt r)$.  On the other hand, 
concave 
zero-sets may no longer have powers of the form $r-o(r)$. For example, the powers are $r/2$ 
for the zero-set of part 1 
of Theorem~\ref{thm-L-finite}, but if both arms have thickness $s$ they 
change to $2r/3$ when $s=2$ (part 2) and 
become $r-o(r)$ when $s\to\infty$ and $r-\Theta(\sqrt r)$ when $s=\Theta(\sqrt r)$ (part 3). 
By contrast, if one arm of the zero-set has thickness $1$ and the other has thickness $s$,
the powers do not change at all up to 
$s=\lfloor r/2\rfloor$ and become $r-o(r)$ only when $s=r-o(r)$ (parts 1 and 4). 

Finally, we address a family of infinite zero-sets, for which the critical power can again 
be established.

\begin{theorem}\label{thm-L-infinite}
Assume the infinite zero set is given by the minimal counts  $(0,r)$ and $(s_1,s_2)$, 
where $1\le s_2<r$ and $1\le s_1$. Then the pure critical power is 
$$
\gamma=\frac{rs_1+s_2}{1+s_1}. 
$$
\end{theorem}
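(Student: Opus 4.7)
We will show $T\asymp p^{-\gamma}$ with $\gamma=(rs_1+s_2)/(s_1+1)$, and establish purity, by identifying the efficient nucleation. The critical configuration is an \emph{$s_1$-block}: $s_1$ \emph{column seeds}---each being $r$ initially occupied points in a vertical $\rho$-window---situated in $s_1$ consecutive columns. Each column of an $s_1$-block fills vertically at speed $\Theta(\rho)$. Horizontal growth then proceeds one column at a time via \emph{extension seeds} ($s_2$ initially occupied points in a vertical $\rho$-window), which occur at linear density $\Theta(p^{s_2})$ per column. An extension seed adjacent to the current block, together with the block (which supplies $u\ge s_1$), cascades to fill the new column, enlarging the block by one. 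Since $s_2<r$, extension seeds are abundant at our scale and the horizontal propagation averages $\Theta(p^{s_2})$ columns per unit time, with each hop to a nearest extension seed costing time $\Theta(p^{-s_2})$.

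For the upper bound, consider the effective causal cone of $y$-extent $\Theta(T)$ and $x$-extent $\Theta(Tp^{s_2})$ around the origin, reflecting the vertical and horizontal propagation speeds. The expected number of $s_1$-blocks within this cone is $\Theta(Tp^{s_2}\cdot(Tp^r)^{s_1})=\Theta(T^{s_1+1}p^{rs_1+s_2})$, which equals $\Theta(1)$ at $T=Cp^{-\gamma}$ for $C$ large. A first-passage-percolation concentration estimate then guarantees that, conditional on an $s_1$-block existing in the cone, the cascade through successive extension seeds reaches the origin by time $T$ with probability tending to $1$.

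For the lower bound with $T'=\lambda p^{-\gamma}$ and $\lambda$ small, we bound the probability that the origin activates by time $T'$. The origin activates only through $v\ge r$ (requiring a column seed in column $0$ within vertical distance $T'$; probability $O(T'p^r)=O(p^{r-\gamma})\to 0$) or through $u\ge s_1$ and $v\ge s_2$ (requiring $s_1$ filled columns meeting the origin's row). A backward induction on the growth dynamics---using that any point's activation itself requires one of these two conditions---shows that producing $s_1$ filled columns ultimately requires an $s_1$-block of column seeds in the effective causal cone; the probability of such a block is $O(\lambda^{s_1+1})$. Summing these contributions gives $\prob{T\le T'}\to 0$ as $\lambda\to 0$, and purity of $\gamma_c$ follows since neither bound contains logarithmic factors.

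The main obstacle in the lower bound is ruling out exotic alternative mechanisms in which isolated initial points (at density $p$, cheaper than column seeds of cost $p^r$) substitute for some of the $s_1$ required column seeds by boosting $u$-counts. A direct probability calculation in each such case produces a strictly positive exponent of $p$ at the critical scale $T\sim p^{-\gamma}$, since the saving of one $p^r$ factor per substitution is outweighed by the other constraints imposed on $v$-counts and surrounding structure; hence each exotic mechanism contributes $o(1)$ to $\prob{T\le T'}$. Together with standard first-passage concentration for the horizontal cascade speed, this completes the proof.
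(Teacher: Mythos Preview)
Your heuristic is correct---the nucleus is an $s_1$-block of column seeds, horizontal spread runs at speed $\Theta(p^{s_2})$ via extension seeds, and the effective cone has $x$-extent $\Theta(Tp^{s_2})$---and for the upper bound this can be made into a proof along your lines. The paper does essentially this (Lemma~\ref{lemma-L-inf-ub}), replacing the appeal to an abstract FPP estimate by an explicit cell decomposition of vertical height $Cp^{-s_2}$ and a geometric path through ``good'' cells.

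The lower bound, however, has a genuine gap. Your ``backward induction on the growth dynamics'' and the claim that each exotic mechanism dies by ``a direct probability calculation'' are assertions, not arguments; the paper spends seven lemmas here, and two ingredients you do not supply are essential. First, one must build a comparison configuration and prove it inert: the paper uses a two-stage coloring (red for vertical lines saturated by $r$ nearby black-or-horizontally-helped sites, then blue for the critical $(s_1,s_2)$ dynamics on top of red and black), and the crux is the combinatorial Lemma~\ref{lemma-L-inf-lb0} that two nearby saturated vertical lines cannot share helping black sites without forcing $2r$ nearby black sites. Your backward induction would have to reproduce this, and it is not clear how, since a column can acquire many occupied points by a mixture of mechanisms rather than from a single column seed.

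Second---and this is the step your sketch elides entirely---your ``effective causal cone of $x$-extent $\Theta(Tp^{s_2})$'' \emph{presupposes} that horizontal influence travels at speed $O(p^{s_2})$, which is exactly what must be proved. The deterministic speed-of-light bound (Lemma~\ref{lemma-lb-easy}) is useless here because the natural box has width $A\ll p^{-(\gamma-s_2)}\ll p^{-\gamma}$. The paper establishes the $O(p^{s_2})$ horizontal speed probabilistically via a neighborhood-path argument (Lemmas~\ref{lemma-L-inf-lb4}--\ref{lemma-L-inf-lb6}): every horizontal step of a green path to the origin must be accompanied by $s_2$ nearby black sites, and after loop-erasure these black sites are distinct at different lateral moves, giving a binomial bound on the number of such moves. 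Your final sentence invoking ``standard first-passage concentration for the horizontal cascade speed'' cannot substitute for this, since no FPP structure has been set up and the speed bound is precisely the point at issue.
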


The rest of the paper is organized as follows. In Section~\ref{sec-prelim} we give definitions and
simple preliminary results that we routinely use throughout the paper. In Section~\ref{sec-examples}
we address the 13 zero-sets that fit into $3\times 3$ box, assuming the results 
from subsequent sections. Section~\ref{sec-bp} is devoted to bootstrap percolation and 
proof of Theorem~\ref{thm-bp}. Section~\ref{sec-line-sym-upper} gives the upper bound for 
the symmetric line growth for arbitary $r\ge 2$, which 
is more precise than stated in Theorem~\ref{thm-line}, and is needed for Theorem~\ref{thm-liner2},
whose proof is completed in Section~\ref{sec-line-2}. The most substantial and technical is 
Section~\ref{sec-line-general}, which completes the proof of Theorem~\ref{thm-line}. At hart, that 
proof is an optimization argument, verifying which nucleation scenarios are optimal. 
The next two sections 
address L-shaped Young diagrams, finite ones in Section~\ref{sec-L-finite} 
(proving Theorem~\ref{thm-L-finite}) and infinite ones in Section~\ref{sec-L-infinite} 
(proving Theorem~\ref{thm-L-infinite}). 
We conclude with a selection of open problems in Section~\ref{sec-open}. 
 
\section{Preliminaries}\label{sec-prelim}

%

We start by introducing a convenient notation for tracking the powers of $p$. 
 For a sequence $X$ of nonnegative random variables, we write $X\gep \alpha$ if  $X p^{\alpha}\to \infty$
 in probability, and  $X\lep{\alpha}$ if 
 $X p^{\alpha}\to 0$
 in probability.
If $\alpha\le 0$ and $X$ is an integer-valued random variable, the meaning of $X\lep{\alpha}$ is simply that $X\to 0$ in probability. We artificially declare $X\gep{\alpha}$ whenever $\alpha<0$.

The next lemma, whose proof is a simple exercise, provides a useful reformulation 
of the asymptotic properties.

\begin{lemma} \label{lemma-prelim-1}
Let $a$ be a deterministic sequence depending on $p$ and $X$ a
sequence of nonnegative random variables depending on $p$. Then 
$X\lesssim a$ if and only if $\prob{X\le t}\to 1$ for every sequence 
$t$ such that $t\gg a$, and $X\gtrsim a$ if and only if $\prob{X\le t}\to 0$ for every sequence 
$t$ such that $t\ll a$. 
Moreover, $X\lep \alpha$ and  $X\gep \alpha$
can be respectively characterized as $\prob{X\le t}\to 1$ for any $t\gtrsim p^{-\alpha}$ and 
$\prob{X\le t}\to 0$ for any $t\lesssim p^{-\alpha}$.
\end{lemma}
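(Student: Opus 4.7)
The plan is to prove each of the four equivalences by directly unpacking the definitions from the excerpt, the only nonroutine step being a diagonal extraction in the converse of the first characterization. For the forward direction of $X\lesssim a \iff \prob{X\le t}\to 1$ for every $t\gg a$: given $t\gg a$ (i.e.\ $a(p)/t(p)\to 0$) and any $\lambda>0$, eventually $t\ge \lambda a$, so $\prob{X>t}\le \prob{X\ge \lambda a}$. Taking $\limsup_{p\to 0}$ and then $\lambda\to\infty$ kills the right-hand side by definition of $X\lesssim a$, hence $\prob{X\le t}\to 1$.

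For the converse, suppose $X\not\lesssim a$. Since $\lambda\mapsto\limsup_{p\to 0}\prob{X\ge\lambda a}$ is nonincreasing in $\lambda$ and bounded by $1$, its limit as $\lambda\to\infty$ exists; failure of $\lesssim$ means this limit is at least $2\epsilon$ for some $\epsilon>0$, and consequently $\limsup_{p\to 0}\prob{X\ge n\,a}\ge 2\epsilon$ for every $n$. Extract inductively a strictly decreasing $p_n\to 0$ with $\prob{X(p_n)\ge n\,a(p_n)}\ge \epsilon$, and define the deterministic sequence $t$ by $t(p):=\tfrac{n}{2}\,a(p)$ for $p\in(p_{n+1},p_n]$. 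Then $t(p)/a(p)\to\infty$ as $p\to 0$, so $t\gg a$; yet $\prob{X(p_n)> t(p_n)}\ge \prob{X(p_n)\ge n\,a(p_n)}\ge \epsilon$ forces $\prob{X(p_n)\le t(p_n)}\le 1-\epsilon$, contradicting $\prob{X\le t}\to 1$.

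The $\gtrsim$ characterization is the exact mirror image, reversing inequalities and sending $\lambda\to 0^+$ in place of $\lambda\to\infty$. For $X\lep \alpha$, which by definition is $Xp^\alpha\to 0$ in probability: if $t\gtrsim p^{-\alpha}$ then eventually $t\ge \eta p^{-\alpha}$ for some fixed $\eta>0$, so $\prob{X\le t}\ge \prob{Xp^\alpha\le\eta}\to 1$; conversely, testing against the specific deterministic sequences $t=\eta p^{-\alpha}$ for each fixed $\eta>0$ yields $\prob{Xp^\alpha\le\eta}\to 1$, i.e.\ $Xp^\alpha\to 0$ in probability. The $X\gep\alpha$ characterization is symmetric. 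The main (essentially only) obstacle throughout is the diagonal step above: one must promote the bad subsequence along which $X$ fails to be $\lesssim a$ into a \emph{single} sequence $t$ globally majorizing $a$, which the piecewise definition on $(p_{n+1},p_n]$ handles cleanly.
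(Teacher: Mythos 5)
Your proof is correct. The paper gives no argument for this lemma (it is dismissed as "a simple exercise"), and your write-up is the natural way to fill it in: the forward directions are immediate from the definitions, and the converse directions require exactly the diagonal extraction you perform, promoting the bad subsequence $p_n$ into a single comparison sequence $t$ defined piecewise on $(p_{n+1},p_n]$. The only implicit assumption worth flagging is that $a$ (and $p^{-\alpha}$) is strictly positive, so that $\{X\ge\lambda a\}$ is monotone in $\lambda$ and $X\ge n\,a$ implies $X>\tfrac n2 a$; this holds in every application in the paper.
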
 

We next state two simple lemmas which we routinely use throughout. As mentioned in 
Section~\ref{sec-intro}, we commonly consider 
dynamics on boxes $B_n$ with the default $0$-boundary. However, 
it is also useful to consider {\it periodic boundary\/} on $B_n$, whereby a site
$(x_1+i, x_2+j)\in\cN_{(x_1,x_2)}$ is understood to be 
$((x_1+i)\mod n, (x_2+j)\mod n)$, giving $B_n$ the
topology of the discrete torus, the Cartesian product of two cycles with $n$ vertices. In either case, 
we call $B_n$ {\it spanned at time\/} $t$ if $\xi_t=B_n$; and {\it spanned\/} if this 
holds at $t=\infty$.  We say a sequence of  events $E$ (depending on $p$) 
occurs  \textit{with high probability} if $\prob{E} \to 1$ as $p\to0$.

\begin{lemma}  \label{lemma-ub-easy} Assume that there exists a constant $C$, and a sequence $n=n(p)$ such that
$B_n$ (with the default $0$-boundary) is spanned at time ${Cn}$ with high probability. Then, with high probability, $T\le Cn$. 
\end{lemma}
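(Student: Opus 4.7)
The plan is to exploit translation invariance of the Bernoulli($p$) initial configuration $\xi_0$ together with monotonicity of the dynamics. First, I would pick a translate $B_n'$ of $B_n$ that contains the origin; for concreteness, let $B_n' = B_n - (\lfloor n/2\rfloor,\lfloor n/2\rfloor)$. Since $\xi_0$ is translation invariant, the $0$-boundary dynamics on $B_n'$ started from $\xi_0 \cap B_n'$ is distributionally identical to the $0$-boundary dynamics on $B_n$ started from $\xi_0 \cap B_n$. Hence, by the hypothesis of the lemma, with high probability $B_n'$ is spanned at time $Cn$ by its $0$-boundary dynamics, and in particular the origin is occupied at time $Cn$ in that process.

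Next I would show that the $0$-boundary process on $B_n'$ is dominated site-wise by the full-plane process $\xi_t$. Let $\tilde\xi_t$ denote the $0$-boundary dynamics on $B_n'$ started from $\xi_0\cap B_n'$. A straightforward induction on $t$, using the monotonicity property of the update rule~\eqref{xi-rule}, gives $\tilde\xi_t \subseteq \xi_t$ for every $t\ge 0$: at time $0$ this is clear since $\xi_0 \cap B_n' \subseteq \xi_0$, and at later times any site added to $\tilde\xi_{t+1}$ satisfies the occupation criterion using counts from $\tilde\xi_t$, which by the inductive hypothesis are no larger than the corresponding counts for $\xi_t$ (the $0$-boundary rule even treats sites outside $B_n'$ as permanently empty, which can only further reduce the counts).

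Combining the two observations, with high probability $(0,0)\in\tilde\xi_{Cn}\subseteq\xi_{Cn}$, which is exactly the statement $T\le Cn$ with high probability. No step is technically hard; the only minor care required is in the monotonicity comparison, where one must keep in mind that the $0$-boundary dynamics is a more conservative version of the free one, so domination goes in the expected direction.
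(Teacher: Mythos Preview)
Your argument is correct and is exactly the ``simple consequence of monotonicity'' the paper has in mind, spelled out in full: translate $B_n$ so that it contains the origin (using translation invariance of the i.i.d.\ initial configuration), and then compare the $0$-boundary process on that translate with the full-plane process via the monotonicity property stated after~\eqref{xi-rule}. There is nothing to add.
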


\begin{proof}
This is a simple consequence of monotonicity.
\end{proof}

\begin{lemma} \label{lemma-lb-easy}
Assume that a sequence $n=n(p)$ 
is such that, with probability converging to $1$ as $p\to 0$, 
the dynamics on $B_n$ with periodic boundary has $|\xi_\infty|/|B_n|\to 0$ in probability. 
Then, with high probability, $T\ge  n/(2\rho)$. 
\end{lemma}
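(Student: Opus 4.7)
The plan is to combine a finite speed of propagation (light cone) bound with stochastic domination of the $0$-boundary box dynamics by the torus dynamics, and with translation invariance of the torus dynamics from the Bernoulli initial condition.

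First I would record the light cone property: iterating the local rule $\xi_{s+1}(x) = f(\xi_s|_{\cN_x})$ with $\cN_x \subseteq [-\rho,\rho]^2$ shows that $\xi_t(x)$ depends on $\xi_0$ only through its restriction to the $\ell^\infty$-ball of radius $t\rho$ around $x$. Hence on the event $\{T < n/(2\rho)\}$, the light cone of $(0,0)$ at time $T$ is strictly contained in some translate $B_n'$ of $B_n$ that contains the origin in its interior.

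Next, on $B_n'$ I would consider two dynamics with the same initial configuration $\xi_0 \cap B_n'$: the $0$-boundary dynamics $\tilde\xi$, and the periodic-boundary (torus) dynamics $\xi^{\mathrm{torus}}$. Because the light cone fits strictly inside $B_n'$, $\tilde\xi_T(0) = \xi_T(0) = 1$ on $\{T < n/(2\rho)\}$. A straightforward induction on $t$ then shows $\tilde\xi_t \subseteq \xi_t^{\mathrm{torus}}$: at every site $x \in B_n'$, the torus wrap-around can only add occupied neighbors to each of the horizontal and vertical arms compared to zero-padding, and the downward closure of $\cZ$ ensures that any pair outside $\cZ$ remains outside when either coordinate is enlarged. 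Consequently $\{T < n/(2\rho)\} \subseteq \{(0,0) \in \xi_\infty^{\mathrm{torus}}\}$.

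Finally, translation invariance of the Bernoulli law on the torus gives
\[
\Prob{(0,0) \in \xi_\infty^{\mathrm{torus}}} = \E\!\left[\frac{|\xi_\infty^{\mathrm{torus}}|}{|B_n|}\right].
\]
The random variable on the right is bounded by $1$ and tends to $0$ in probability by hypothesis, so bounded convergence upgrades this to convergence of expectations, and therefore $\Prob{T < n/(2\rho)} \to 0$.

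The argument is essentially formal and I do not expect a serious obstacle. The one place that deserves a careful check is the inductive domination $\tilde\xi \subseteq \xi^{\mathrm{torus}}$: this relies on both the product structure of the rule (the zero-set condition is applied arm-by-arm to horizontal and vertical counts via $\cN_x^h$ and $\cN_x^v$) and the downward closure of $\cZ$, which together guarantee that enlarging either arm count of a pair already outside $\cZ$ keeps it outside.
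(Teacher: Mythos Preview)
Your proposal is correct and follows essentially the same approach as the paper: translation invariance on the torus gives $\Prob{\mathbf 0\in\xi_\infty^{\text{torus}}}=\E[|\xi_\infty^{\text{torus}}|/|B_n|]\to 0$ by dominated convergence, and the finite speed of propagation yields $\{T<n/(2\rho)\}\subseteq\{\mathbf 0\in\xi_\infty^{\text{torus}}\}$. The only difference is cosmetic: you route the light-cone comparison through the $0$-boundary dynamics $\tilde\xi$ and a monotonicity step $\tilde\xi\subseteq\xi^{\text{torus}}$, whereas the paper simply notes that for $t<n/(2\rho)$ the torus and $\bZ^2$ dynamics \emph{coincide} at the origin (the light cone does not wrap), making the intermediate domination unnecessary.
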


\begin{proof}
Due to periodic boundary, the dynamics on $B_n$ is translation invariant,  
so
\begin{align*}
\prob{\mathbf{0}\in\xi_\infty}=\expe{\frac{|\xi_\infty|}{|B_n|}}\to 0,
\end{align*}
by dominated convergence. But if $\mathbf{0}\notin\xi_\infty$, then $T\ge n/(2\rho)$ deterministically, 
as boundary effects cannot travel faster than the ``speed of light,'' that is, 
$\ell^1$-distance $\rho$ per update. 
\end{proof}

For upper bounds, we frequently use the following two inequalities.
\begin{lemma}\label{lemma-binom}
If $p$ is small enough, $\prob{\text{\rm Binomal}(n,p)\le np/2}\le e^{-np/7}$.
If $np$ is small enough, $\prob{\text{\rm Binomal}(n,p)>0}\ge np/2$.
\end{lemma}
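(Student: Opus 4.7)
The statement is a pair of elementary Binomial tail estimates, so the plan is to prove each inequality separately by standard moment-generating-function and Taylor-expansion arguments, with the only care going into pinning down the constant $1/7$.

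For the first inequality, my plan is to apply the standard Chernoff bound to $X\sim\text{Binomial}(n,p)$. Using $\E[e^{-tX}]=(1-p+pe^{-t})^n\le \exp(np(e^{-t}-1))$ for any $t>0$, Markov's inequality on $e^{-tX}$ gives
\begin{equation*}
\prob{X\le np/2}\le \exp\bigl(np[t/2+e^{-t}-1]\bigr).
\end{equation*}
I would then set $t=\log 2$ to minimize the bracket, obtaining the exponent $-np\cdot(1-\log 2)/2$. Since $(1-\log 2)/2 \approx 0.1534 > 1/7 \approx 0.1429$, this immediately yields $\prob{X\le np/2}\le e^{-np/7}$. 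Strictly speaking, this Chernoff bound is valid for all $p\in[0,1]$, so the hypothesis ``$p$ small enough'' is not really needed; I would simply note this. If one preferred a version using only $(1-p)^n\le e^{-np}$ without the mgf shortcut, the same constant-crunching would still go through once $p$ is small enough that $(1-p)^{n(1/2)}\le e^{-np/2}$ in the appropriate range — but the cleaner route is the one above.

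For the second inequality, I would write $\prob{X>0}=1-(1-p)^n$ and use $(1-p)^n\le e^{-np}$, so that $\prob{X>0}\ge 1-e^{-np}$. Setting $x=np$, a direct Taylor expansion gives
\begin{equation*}
1-e^{-x}= x-\frac{x^2}{2}+\frac{x^3}{6}-\cdots \ge x-\frac{x^2}{2}= x\Bigl(1-\frac{x}{2}\Bigr),
\end{equation*}
and the right-hand side is at least $x/2$ whenever $x\le 1$. Hence $\prob{X>0}\ge np/2$ as soon as $np\le 1$, which is exactly what is meant by ``$np$ small enough.'' As an alternative, the same bound follows from Bonferroni: $\prob{X\ge 1}\ge np-\binom{n}{2}p^2\ge np(1-(n-1)p/2)\ge np/2$ under the same condition.

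There is really no main obstacle here; both inequalities are textbook. The only thing to be careful about is booking the constants in (1) so that the exponent $1/7$ (rather than, say, $1/8$) drops out cleanly — this is handled by choosing $t=\log 2$ and noting the numerical inequality $(1-\log 2)/2 > 1/7$. Everything else is a one-line calculation.
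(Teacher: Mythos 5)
Your proof is correct; the paper states this lemma without proof, treating it as a standard binomial estimate, and your Chernoff-bound argument for the lower tail (with $t=\log 2$ giving exponent $(1-\log 2)/2>1/7$, valid for all $p$) and the Taylor/Bonferroni argument for $\prob{X>0}\ge np/2$ when $np\le 1$ are both standard and complete. Nothing further is needed.
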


On the other hand, we use the following consequence of the Markov inequality for 
lower bounds. 

\begin{lemma}  \label{lemma-markov} If $X$ is a sequence of random variables that depends on $p$, 
and $a$ is a deterministic such sequence, then $\mathbb E X\ll a$ implies $X/a\to 0$ in 
probability. In particular, for $\gamma\in\bR$, $\mathbb E X\lep \gamma$ implies 
$X\lep \gamma$.
\end{lemma}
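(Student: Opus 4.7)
The plan is to observe that this is a direct application of Markov's inequality to nonnegative sequences (nonnegativity of $X$ is implicit from the context of the paper, where $X$ is always some count or hitting time).

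First I would establish the main claim. Fix any $\epsilon>0$. By Markov's inequality applied to the nonnegative random variable $X$ at level $\epsilon a$,
\begin{equation*}
\prob{X\ge \epsilon a}\le \frac{\mathbb E X}{\epsilon a}.
\end{equation*}
The hypothesis $\mathbb E X\ll a$ means precisely that $\mathbb E X/a\to 0$ as $p\to 0$, so the right-hand side tends to $0$ for every fixed $\epsilon>0$. Hence $\prob{X/a\ge \epsilon}\to 0$ for every $\epsilon>0$, which is the definition of $X/a\to 0$ in probability.

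For the second claim, I would unwind the notation. By definition of $\lep \gamma$ applied to the deterministic sequence $\mathbb E X$, the hypothesis $\mathbb E X\lep\gamma$ says $\mathbb E X\cdot p^{\gamma}\to 0$, i.e., $\mathbb E X\ll p^{-\gamma}$. Specializing the first part with $a=p^{-\gamma}$, we obtain $X p^{\gamma}\to 0$ in probability, which is exactly $X\lep \gamma$.

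There is essentially no obstacle: the only subtlety is checking that the notation $\lep \gamma$ is consistent for deterministic sequences (convergence in probability of a deterministic sequence coincides with ordinary convergence), and that $X$ is implicitly nonnegative so Markov applies. Both are immediate from the conventions fixed earlier in Section~\ref{sec-prelim}.
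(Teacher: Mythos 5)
Your proof is correct and is exactly the intended argument: the paper states this lemma as an immediate consequence of Markov's inequality and omits the proof, and your application of Markov at level $\epsilon a$ followed by the specialization $a=p^{-\gamma}$ is the canonical way to fill it in. Your side remarks (nonnegativity of $X$ from the paper's conventions, and that $\lep\gamma$ for the deterministic sequence $\mathbb{E}X$ means $\mathbb{E}X\,p^{\gamma}\to 0$) are also consistent with Section~\ref{sec-prelim}.
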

 
We often use color-coding to distinguished between sites that get occupied in different 
times in our various updating schemes. In all schemes, we consistently refer to initially 
occupied sites as {\it black\/}, and nonoccupied vertices as {\it white\/}.

\section{Examples: all zero-sets with height and width at most 3}\label{sec-examples}

Up to reflection symmetry, there are 13 zero-sets that fit into the $3\times 3$ box. 
Our methods provide powers in all 13 cases, and the results are below. 
Three cases are not completely covered by our results elsewhere in the paper, so
we provide separate arguments, which also serve as simple illustrations of the methods we use. 


\ytableausetup{boxsize=0.3cm, centertableaux , 
}
\begin{itemize}
\item $\cZ=\ydiagram[*(ashgrey)]{1}$\hskip0.2cm This is the only voracious case, which is easily seen 
to have pure critical power $\gc=1/2$. 

\item $\cZ=\ydiagram[*(ashgrey)]{2}$\hskip0.2cm Pure critical power $\gc=2/3$ 
(Proposition~\ref{prop-s=1}).

\item $\cZ=\ydiagram[*(ashgrey)]{3}$\hskip0.2cm Pure critical power $\gc=3/4$ (Proposition~\ref{prop-s=1}).

\end{itemize}

\begin{prop}\label{prop-s=1}
The line growth with $1=s\le r$ has pure critical power $\gc=r/(r+1)$. 
\end{prop}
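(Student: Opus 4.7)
Plan: The zero-set $\cZ = [0,r-1]\times\{0\}$ means a site becomes occupied whenever its horizontal count is at least $r$ or its vertical count is at least $1$. Consequently, a single initial point generates the entire vertical line through it, propagating at speed $\rho$ in both vertical directions. The key nucleation event is the formation of an \emph{$r$-cluster}: $r$ initial points whose $x$-coordinates lie in an interval of length at most $2\rho$. Such a cluster produces $r$ vertical lines that collide in some horizontal neighborhood; the horizontal rule then fires and a band is born, which spreads to fill the plane at positive speed (the standing assumption $\rho \ge r$ makes the horizontal spreading speed $\rho - r + 1$ and the vertical spreading speed $\rho$ both positive).

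For the upper bound $T \lesssim p^{-r/(r+1)}$, I would take $n = Cp^{-r/(r+1)}$ with $C$ large, partition $B_n$ into disjoint vertical strips of width $\rho$, and count the strips containing at least $r$ initial points. The per-strip probability is at least $c_1(\rho n p)^r$ for $p$ small, so the number of good strips has expectation at least $c_2 \rho^{r-1} n^{r+1} p^r = c_2 \rho^{r-1} C^{r+1}$, which is large. By independence across strips, some strip contains $r$ initial points with high probability; these automatically form an $r$-cluster because their $x$-range is at most $\rho - 1 < 2\rho$. Since all $y$-coordinates lie in $[0, n-1]$, the generated vertical lines collide by time $n/(2\rho)$, and the resulting band then spreads at speed $\rho - r + 1$ horizontally and $\rho$ vertically, spanning $B_n$ in time $O(n)$. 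Lemma~\ref{lemma-ub-easy} gives $T \lesssim n$.

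For the matching lower bound $T \gtrsim p^{-r/(r+1)}$, I would take $n = cp^{-r/(r+1)}$ with $c$ small and run the dynamics on $B_n$ with periodic boundary. Call a column \emph{initially nonempty} if it contains an initial point. The horizontal rule can fire only once $r$ initially nonempty columns lie within a torus-interval of length $2\rho$. The expected number of such $r$-tuples is at most $n\binom{2\rho}{r-1}(np)^r \lesssim n^{r+1}p^r = c^{r+1}$, which tends to $0$ for $c$ small; by Lemma~\ref{lemma-markov} no such cluster exists with high probability, so the horizontal rule never fires. In that event $\xi_\infty$ equals the union of the vertical lines through the initial points, and $|\xi_\infty|/|B_n|$ is bounded above by the fraction of initially nonempty columns, whose expectation is at most $np \to 0$. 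Hence $|\xi_\infty|/|B_n| \to 0$ in probability, and Lemma~\ref{lemma-lb-easy} yields $T \gtrsim n$.

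The argument is short and the only mildly delicate point is the explicit post-ignition spreading underlying the upper bound; this presents no serious obstacle since both spreading speeds are positive under $\rho \ge r$.
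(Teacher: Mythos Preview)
Your approach is essentially the paper's and is correct in outline, but two small slips should be fixed. First, in the upper bound you need $r$ initially nonempty \emph{columns} in the strip, not merely $r$ initial points: if several of your $r$ points share a column they generate fewer than $r$ vertical lines, and then no horizontal neighborhood ever sees $r$ occupied sites, so the horizontal rule never fires and the box is not spanned. The fix is immediate, since the probability that a width-$\rho$ strip has $r$ nonempty columns is still $\Theta((np)^r)$ (using $\rho\ge r$), and the paper phrases it exactly this way. Second, Lemmas~\ref{lemma-ub-easy} and~\ref{lemma-lb-easy} as stated require the spanning probability to tend to $1$ and $|\xi_\infty|/|B_n|\to 0$ in probability, which your fixed-constant choices $n=Cp^{-r/(r+1)}$ and $n=cp^{-r/(r+1)}$ do not deliver (the limiting probabilities are close to $1$ and $0$ but not equal); either take $n\gg p^{-r/(r+1)}$ and $n\ll p^{-r/(r+1)}$ as the paper does and invoke Lemma~\ref{lemma-prelim-1}, or bypass the lemmas and argue directly from the definitions of $\lesssim$ and $\gtrsim$.
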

\begin{proof}
If $n\gg p^{-r/(r+1)}$, that is, 
$n(np)^r\gg 1$, then with high probability
the box $B_n$ contains $r$ initially nonempty neighboring vertical lines. Such a configuration spans $B_n$ by time $2n$, so Lemmas~\ref{lemma-prelim-1} and~\ref{lemma-ub-easy} finish the proof of 
the upper bound. To prove the matching lower bound, consider the dynamics on $B_n$ with 
periodic boundary. If no $(2\rho+1)\times n$ vertical strip contain $r$ initially nonempty vertical lines, no site in 
any initially empty vertical line gets occupied. This happens with high probability if
$n\ll p^{-r/(r+1)}$, in which case also the expected number of sites on initially nonempty vertical lines
is at most a constant times $n^2\cdot np\ll n^2$. Lemma~\ref{lemma-lb-easy} then implies the lower 
bound. 
\end{proof}

\begin{itemize}

\item $\cZ=\ydiagram[*(ashgrey)]{1, 2}$\hskip0.2cm This is bootstrap percolation with $r=2$ and pure critical power $\gc=1$ (Theorem~\ref{thm-bp}).

\item $\cZ=\ydiagram[*(ashgrey)]{1, 3}$\hskip0.2cm Pure critical power $\gc=1$ 
(Proposition~\ref{prop-small-L}). 

\end{itemize}

\begin{prop}\label{prop-small-L}
The zero-set with minimal counts $(0,2)$, $(1,1)$ and $(r,0)$, with $r\ge 2$, has 
pure critical power $\gc=1$.
\end{prop}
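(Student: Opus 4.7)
The plan is to prove matching bounds $T \lesssim p^{-1}$ and $T \gtrsim p^{-1}$ via Lemmas~\ref{lemma-ub-easy} and~\ref{lemma-lb-easy}, following the template of Proposition~\ref{prop-s=1}. The three minimal counts drive distinct growth mechanisms: the count $(0,2)\notin\cZ$ means that a \emph{colony} (a pair of initial sites in the same column at vertical distance at most $2\rho$) nucleates a vertical segment that thereafter grows unaided at speed $\rho-1$ per side per step; the count $(1,1)\notin\cZ$ means that a single initial site in a column adjacent to a fully occupied vertical segment suffices to infect that column over the range of the segment; and the count $(r,0)\notin\cZ$ means that horizontal propagation without further initial sites requires $r$ consecutive fully occupied columns.

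For the upper bound, I take $n=Cp^{-1}$ with $C$ large and study the dynamics on $B_n$ with $0$-boundary. The expected number of colonies in $B_n$ is of order $n^2\rho p^2 = C^2\rho$, so a standard second-moment argument places a colony within $B_{n/2}$ with high probability. Its vertical segment has length of order $K/p$ at time $K/p$ for any fixed $K$, so by a standard binomial estimate the neighboring column contains an initial site within the vertical range of this segment with probability $1-\cO(e^{-cK})$. The $(1,1)$ rule then infects that column, and iterating $r$ times yields $r$ consecutive active columns by time $\cO(r/p)=\cO(p^{-1})$. The $(r,0)$ rule then drives horizontal growth at positive speed $\rho-r+1$, which combined with the ongoing vertical growth at speed $\rho-1$ covers the distance $\cO(n)$ to the origin in additional time $\cO(n/\rho) = \cO(p^{-1})$. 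By Lemma~\ref{lemma-ub-easy}, $T \lesssim p^{-1}$.

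For the lower bound, I take $n=\epsilon p^{-1}$ with $\epsilon$ small and study the periodic dynamics on $B_n$. A column becomes fully occupied only if it lies in a maximal contiguous \emph{cluster} of fully occupied columns in which every column has at least one initial site and at least one contains a colony. Starting from any colony, the cluster extends in each direction through a subcritical branching process whose success probability is $1-(1-p)^n \sim\epsilon$ per column, giving $\prob{\text{cluster size}\geq r} \leq \cO(r\epsilon^{r-1})$. Combined with the $\cO(\epsilon^2\rho)$ expected colonies in $B_n$, the probability that any cluster reaches size $\geq r$---the only mechanism enabling horizontal growth via the $(r,0)$ rule---is $\cO(r\rho\epsilon^{r+1})$. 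On the complementary event, $|\xi_\infty|$ is controlled by initial sites together with clusters of at most $r-1$ columns per colony, yielding
\[
\expe{|\xi_\infty|/|B_n|} \leq \cO\!\left(p(1+r\rho\epsilon)\right) + \cO(r\rho\epsilon^{r+1}),
\]
which tends to $\cO(r\rho\epsilon^{r+1})$ as $p\to 0$ and to $0$ as $\epsilon\to 0$. By the Markov inequality and Lemma~\ref{lemma-lb-easy}, $T\geq n/(2\rho) = \epsilon p^{-1}/(2\rho)$ with probability tending to $1$ after sending $p\to 0$ and then $\epsilon\to 0$, and hence $T\gtrsim p^{-1}$.

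The main technical obstacle is the combinatorial bookkeeping of the cluster structure in the lower bound, in particular verifying rigorously that no mechanism other than a colony combined with an $(r-1)$-column extension of initial-site-bearing neighbors can produce $r$ consecutive fully occupied columns, and that chains of infection linking spatially separated colonies do not bypass the $\cO(r\rho\epsilon^{r+1})$ estimate. The upper-bound argument is, by contrast, comparatively routine once the three growth mechanisms are identified and the colony-plus-random-helpers dynamic is made precise.
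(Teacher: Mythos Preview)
Your upper bound is essentially the paper's: find a strip of $r$ neighbouring columns in which one has two close black sites and the remaining $r-1$ each contain at least one; such a strip spans $B_n$ in $\cO(n)$ steps and exists with high probability once $n\gg p^{-1}$. The paper packages this as a single probability estimate, you unfold the growth step by step, but the content is the same.

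For the lower bound you miss a one-line argument. Since $\cZ\supseteq\{(0,0),(0,1),(1,0)\}$, monotonicity in the zero-set gives $T$ at least the occupation time for threshold-$2$ bootstrap percolation, which is $\asymp p^{-1}$ by Theorem~\ref{thm-bp}. That is the paper's entire lower-bound proof. Your direct column-cluster analysis is in the right spirit and could probably be completed, but the obstacles you flag in your last paragraph are genuine, not mere bookkeeping. Your notion of ``colony'' is too narrow: a pair of black sites in \emph{different} columns within $\ell^\infty$-distance $\rho$ also fills both columns, via $(1,1)$ followed by $(0,2)$. And the assertion that a single cluster reaching size $r$ is the unique trigger for $(r,0)$ is false --- two nearby clusters whose sizes sum to $r$, or (for $r=2$) simply two black sites on the same row within distance $2\rho$, also fire $(r,0)$; the latter event has expected count $\Theta(\epsilon^2\rho)$ and is not covered by your $\cO(r\rho\epsilon^{r+1})$ bound. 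Cataloguing and controlling all such mechanisms amounts to reproving the threshold-$2$ bootstrap lower bound from scratch; the monotone comparison sidesteps all of it.
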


\begin{proof}
The lower bound 
follows by comparison with bootstrap percolation with $r=2$, whose proof can also be adapted to 
get the upper bound. Indeed, if $n\gg p^{-1}$, the probability that a fixed $r\times n$ strip has 
a vertical line with two neighboring black sites while the other $r-1$ vertical lines each have 
at least one black site is $\gg p$, and so the number of such strips in an $n\times n$ box is 
large with high probability, but one such strip will occupy the entire box by time $4n$, 
and the upper bound follows by Lemmas~\ref{lemma-prelim-1} and~\ref{lemma-ub-easy}.
\end{proof}

\begin{itemize}

\item $\cZ=\ydiagram[*(ashgrey)]{2, 2}$\hskip0.2cm This case still has critical power $\gc=1$, which 
however is 
no longer pure. From Theorem~\ref{thm-liner2}, we get that $T\asymp p^{-1}\log p^{-1}$.  

\item $\cZ=\ydiagram[*(ashgrey)]{2, 3}$\hskip0.2cm Critical power $\gc=4/3$, and 
purity is unresolved (Proposition~\ref{prop-line-perturbed}). 

\item $\cZ=\ydiagram[*(ashgrey)]{3, 3}$\hskip0.2cm This is line percolation with $r=3$ and $s=2$, so it has the same critical power $\gc=4/3$, with 
unresolved purity (Theorem~\ref{thm-line}).  

\item $\cZ=\ydiagram[*(ashgrey)]{1,1, 3}$\hskip0.2cm Pure critical power $\gc=3/2$ 
(Theorem~\ref{thm-L-finite}).

\item $\cZ=\ydiagram[*(ashgrey)]{1,2, 3}$\hskip0.2cm This is bootstrap percolation with $r=3$ 
and pure critical power $\gc=5/3$ (Theorem~\ref{thm-bp}). 

\item $\cZ=\ydiagram[*(ashgrey)]{2,2, 3}$\hskip0.2cm Critical power still $\gc=5/3$
and still pure (Proposition~\ref{prop-223}). 

\end{itemize}

\begin{prop} \label{prop-223}
For the above zero-set, $\gc=5/3$ is the pure critical power. 
\end{prop}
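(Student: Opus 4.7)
The lower bound is immediate from Theorem~\ref{thm-bp}. Writing $\cZ = \ydiagram{2,2,3}$ and $\cZ_0 = \ydiagram{1,2,3}$ for the bootstrap $r = 3$ zero-set, one has $\cZ_0 \subsetneq \cZ$, the unique extra cell being $(1,2)$. By monotonicity of the dynamics in the zero-set, the $\cZ$-dynamics is no faster than the $\cZ_0$-dynamics under any common coupling, so $T \ge T_{\cZ_0} \gtrsim p^{-5/3}$ by the lower bound half of Theorem~\ref{thm-bp}.

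For the matching upper bound $T \lesssim p^{-5/3}$, the plan is to adapt the upper bound construction from Section~\ref{sec-bp} (bootstrap $r = 3$) to the restricted rule set of $\cZ$: a site is added when $h \ge 3$, $v \ge 3$, or when both $h \ge 2$ and $v \ge 1$. The only transition available for bootstrap $r = 3$ but absent here is the boundary case $(h,v) = (1,2)$. I would trace the critical droplet of Section~\ref{sec-bp} through its growth stages and verify that each transition can be realized by one of the three allowed rules of $\cZ$. The dominant mechanisms---line growth of vertical or horizontal triples via $v \ge 3$ or $h \ge 3$, and lateral spread between two parallel lines within horizontal distance $\rho$ of each other via $h \ge 2$, $v \ge 1$ seeded by the $p$-density background of initial sites---already lie within the allowed rules.

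The principal obstacle is the verification itself: wherever the bootstrap trajectory actually invokes $(1,2)$, the plan is to reroute by either adjoining one extra initial helper to the droplet (incurring a multiplicative cost of $O(\rho p)$ in the droplet probability, which must then be absorbed by slack in the critical scaling $p^{10/3}$ per unit area), or by permuting the order of growth transitions so that the forbidden configuration is never instantiated. Checking that the number of such reroutings is $O(1)$ independent of $p$ is what keeps the exponent at $5/3$; this is plausible because the $\hm = 1$ stage of the bootstrap $r = 3$ droplet is driven mainly by line nucleation and lateral spread, neither of which requires $(1,2)$ in an essential way. Once the adapted droplet is produced in a box of side $\lambda p^{-5/3}$ with probability tending to $1$ as $\lambda \to \infty$, Lemma~\ref{lemma-ub-easy} yields $T \lesssim p^{-5/3}$, and combined with the lower bound this gives purity.
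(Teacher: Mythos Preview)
Your lower bound is correct and is exactly what the paper does: monotonicity in the zero-set gives $T_\cZ\ge T_{\cZ_0}$ and Theorem~\ref{thm-bp} yields $T_{\cZ_0}\gtrsim p^{-5/3}$.

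The upper bound, however, is not a proof but a sketch, and the specific strategy you outline does not reach $5/3$. The bootstrap $r=3$ packed strip of Section~\ref{sec-bp-upper} is: row~1 $3$-filled, row~2 $2$-filled, row~3 $1$-filled. Under $\cZ$ the first two rows fill as before, but the third row is blocked: with rows~1 and~2 occupied a site on row~3 has $v=2$, and the single black site gives only $h=1$, so $(h,v)=(1,2)\in\cZ$ is exactly the forbidden pair. Upgrading row~3 from $1$-filled to $2$-filled costs a factor $np^2\asymp p^{1/3}$ at $n\asymp p^{-5/3}$, and the expected number of modified strips becomes $n\cdot np^3\cdot np^2\cdot np^2=n^4p^7$, which forces $n\gg p^{-7/4}$. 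There is no slack at the critical scale to absorb even this one extra helper: the expected number of bootstrap packed strips is $n^3p^5=\Theta(\lambda^3)$ when $n=\lambda p^{-5/3}$, and $\lambda^3 p^{1/3}\to 0$.

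The paper's construction is fundamentally different from a local droplet modification. Once two adjacent horizontal lines are occupied (which does cost $n^3p^5$), every column with a nearby black site fills via $v\ge 3$, giving $\gep 2/3$ many occupied vertical lines. Each such vertical line, together with two nearby non-collinear black sites on a pair of adjacent rows, produces a \emph{new} pair of occupied horizontal lines via $(h,v)=(2,1)$; this yields $\gep 1/3$ many pairs. Finally, one of these $\gg p^{-1/3}$ pairs has an adjacent row with two neighboring black sites (probability $np^2\asymp p^{1/3}$ each), giving three consecutive occupied rows, after which $v\ge 3$ takes over. The essential idea you are missing is this row--column iteration spread across the whole box, not an $O(1)$ repair of a local seed.
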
 

\begin{proof} 
The lower bound follows from bootstrap percolation 
with $r=3$, so we only need to prove the upper bound. 

\begin{figure}
\begin{center}
\begin{tikzpicture}
\coordinate [label= left:$B_1$](b1) at (0,4);
\coordinate [label= right:$B_2$](b2) at (4,4);
\coordinate [label= right:$B_3$](b3) at (4,0);
\coordinate [label= left:$B_4$](b4) at (0,0);
\coordinate [label= center:$L$](L) at (3,-0.3);
\coordinate (L1L2) at (4,0.9);
\node at (4.9,0.8) {$(L_1,L_2)$};
\coordinate (b5) at (0,2);
\coordinate (b6) at (4,2);
\coordinate (b7) at (2,0);
\coordinate (b8) at (2,4);
\coordinate (l1) at (0,2.8);
\coordinate (r1) at (4,2.8);
\coordinate (l2) at (0,3.0);
\coordinate (r2) at (4,3.0);

\coordinate (d1) at (3.0,0);
\coordinate (u1) at (3.0,4);

\coordinate (l3) at (0,0.8);
\coordinate (r3) at (4,0.8);
\coordinate (l4) at (0,1.0);
\coordinate (r4) at (4,1.0);
\coordinate (l5) at (0,1.2);
\coordinate (r5) at (4,1.2);
 
\coordinate (p1) at (1.3,2.8);
\coordinate (p2) at (1.5,2.8);
\coordinate (p3) at (0.5,3);
\coordinate (p4) at (0.7,3);
\coordinate (p5) at (0.9,3);
\coordinate (p6) at (3,2.6);
\coordinate (p7) at (2.8,1);
\coordinate (p8) at (2.6,0.8);
\coordinate (p9) at (1,1.2);
\coordinate (p10) at (0.8,1.2);

 \draw[color=black, thick] (b1)--(b2);
 \draw[color=black,thick] (b2)--(b3);
 \draw[color=black,thick] (b3)--(b4);
\draw[color=black,thick] (b4)--(b1);
\draw[color=black,thick] (b5)--(b6);
\draw[color=black,thick] (b7)--(b8);
\draw[color=red, thick] (l1)--(r1);
\draw[color=red, thick] (l2)--(r2);
\draw[color=green, thick] (u1)--(d1);
\draw[color=blue, thick] (l3)--(r3);
\draw[color=blue, thick] (l4)--(r4);
\draw[color=gray, thick] (l5)--(r5);

\foreach \point in {p1,p2,p3,p4,p5,p6, p7,p8, p9,p10}
           \fill [black] (\point) circle (1.5pt);
\end{tikzpicture}
\end{center}
\caption{Occupying 3 neighboring parallel horizontal lines for the zero set
$\cZ=\ydiagram[*(ashgrey)]{2,2, 3}$. The red lines get occupied first, then the green line $L$, then 
the two blue lines $(L_1,L_2)$, and finally the grey line.}
    \label{fig-331}
 \end{figure}

Assume $n\gg p^{-5/3}$. Divide the box $B_{2n}$ into 4 congruent $n\times n$ boxes $B_{1}$ (top left), $B_{2}$ (top right), $B_{3}$ (bottom right), and $B_4$ (bottom left); see Figure~\ref{fig-331}. 
The probability of the event that 
there is a horizontal line with 3 contiguous black sites next to a horizontal line with 2 neighboring black sites, both in $B_1$, approaches $1$. Conditioned on this event, there is at least 
one pair of fully occupied neighboring horizontal lines in $B_2$ (by time $4n$). 
Let $V_1$ be the number of vertical 
lines that contain at least one black site in the neighborhood of the pair. As $np\gg p^{-2/3}$, $V_1\gep 2/3$. We may also assume that these lines are at distance at least $2$ from one another. All $V_1$ lines become occupied (by additional time $2n$). Let $H_2$ be the number of neighboring pairs
of horizontal lines $(L_1, L_2)$, such that there exists a line $L$ among the $V_1$ eventually occupied 
vertical lines, and two non-collinear black sites in $B_3$ and in the neighborhood of $L$, one on $L_1$ and one on $L_2$. Then $L_1$ and $L_2$ become occupied (by additional time $2n$).
We also assume that these pairs are separated by distance at least $1$. 
As $p^{-2/3}np^2\gg p^{-1/3}$, $H_2\gep 1/3$. Now consider the event that there are two neighboring
black sites on a horizontal line in $B_4$ and in the neighborhood of one of the $H_2$ 
horizontal pairs. As $p^{-1/3}np^2\gg 1$, this event happens with high probability, and results in 3 occupied neighboring horizontal lines 
by additional time $2n$. Then, by additional time $2n$, the entire $B_{2n}$ is finally occupied. Thus, with 
high probability, $B_{2n}$ is occupied by time $12n$, and Lemmas~\ref{lemma-prelim-1} and~\ref{lemma-ub-easy} finish
the argument.
\end{proof}

\begin{itemize}

\item $\cZ=\ydiagram[*(ashgrey)]{2,3, 3}$\hskip0.2cm Critical power $\gc=2$ 
with unresolved purity (Proposition~\ref{prop-line-perturbed}). 

\item $\cZ=\ydiagram[*(ashgrey)]{3,3, 3}$\hskip0.2cm This is symmetric line growth with $r=3$ 
and critical power $\gc=2$ (Theorem~\ref{thm-line}), which we suspect, 
due to Theorem~\ref{thm-liner2}, 
not to be pure. 

\end{itemize}

Two small examples for which our methods leave critical powers
unresolved 
are $\cZ=\ydiagram[*(ashgrey)]{2,2, 4}$ (with lower and upper powers $5/3$ and $2$), 
and $\cZ=\ydiagram[*(ashgrey)]{3,3, 4}$ (with lower and upper powers $2$ and 
$9/4$).



%


\section{Bootstrap percolation}\label{sec-bp}

\subsection{Upper bounds}\label{sec-bp-upper}

For an integer $k\ge 1$, call a row of the box $B_n$ $k$-filled if there is a 
contiguous interval of length $k$ consisting entirely of black sites.  
Assume $n\ge k$. If $np^k<1$, the probability that a fixed row is $k$-filled 
is for small $p$ between $\frac{1}{4k}\cdot np^k$ and $np^k$. 
If $np^k\ge 1$, this probability is at least $\frac{1}{4k}$, and it goes to $1$ 
if $np^k\gg 1$.

A strip of $r$ consecutive rows in $B_n$ is {\it packed\/} if it has the following property: 
starting from the bottom of the strip, the first row is $r$-filled, the next row
is $(r-1)$-filled, \ldots, and the top row is $1$-filled. If a strip is packed, then it is easy to see that it is spanned in at most $rn$ time steps and then the entire $B_n$ is spanned in 
at most additional $n$ time steps.

Assume $r\ge 2$, $p$ is small, and $n\sim Cp^{-\gamma}$ for some large constant $C$. The probability that a fixed strip is packed is at 
least a constant times
$$
np^r\cdot np^{r-1}\cdots np^{r-\bm},
$$
where $\bm=\bm(\gamma)\in[1,r]$ is the largest integer $m$ such that $np^{r-m}< 1$, that is, 
such that $-\gamma+r-m> 0$. There 
are $\lfloor n/r\rfloor$ independent candidates for a packed strip in $B_n$. Therefore, 
with this choice of $\bm$, the probability that there exists a packed strip is close 
to $1$ as soon as
\begin{equation}\label{packed}
n\cdot np^r\cdot np^{r-1}\cdots np^{r-\bm}=n^{\bm+2} p^{(\bm+1)(2r-\bm)/2}
\end{equation}
is large. 
Let $\hm$ be the largest $m$ for 
which
$$
(r-m)-\frac{(m+1)(2r-m)}{2(m+2)}> 0,
$$
that is, 
$$
-m^2-3m+2r> 0.
$$
Then we pick $\gamma$ so that 
the power of $p$ in (\ref{packed}) vanishes if $\bm$ is replaced by $\hm$
(so that (\ref{packed}) is a large constant).
These computations imply the following result. 
\begin{lemma}\label{lemma-bp-upper}
For $r\ge 2$ let
$$
\hm=\left\lceil\frac{\sqrt{9+8r}-5}{2}\right\rceil
$$
and 
\begin{equation}\label{gamma-choice}
\gamma=\frac{(\hm+1)(2r-\hm)}{2(\hm+2)}.
\end{equation}
Then $n\gg p^{-\gamma}$ implies $\prob{B_n\text{ spanned at time $(r+1)n$}}\to 1$. Consequently, 
$t\gg p^{-\gamma}$ implies $T\lesssim t$.
\end{lemma}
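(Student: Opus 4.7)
The plan is to formalize the packed-strip argument sketched in the paragraphs immediately preceding the lemma statement. The proof has three ingredients: (i) a deterministic statement that a single packed strip inside $B_n$ spans the full box by time $(r+1)n$; (ii) a product lower bound on the probability that a fixed $r$-row strip is packed; and (iii) an independence argument across $\lfloor n/r \rfloor$ disjoint candidate strips, closed off by Lemma~\ref{lemma-ub-easy}.

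For (i), I would argue by induction up the strip. Because $\rho \ge r$, the interval of $r$ contiguous black sites in the bottom ($r$-filled) row contributes $r$ active neighbors to the horizontal part of any adjacent empty same-row site, so such a site meets threshold $r$ and becomes occupied; the interval therefore grows at unit speed in both directions and fills the row in at most $n$ steps. Once the bottom $j$ rows of the strip are fully active, an empty site next to the $(r-j)$-filled interval in row $j+1$ already enjoys $r-j$ active horizontal neighbors and at least $j \ge 1$ active vertical neighbors from below, again reaching threshold $r$. Iterating up the strip gives full occupation of the strip by time $rn$, after which it grows vertically at unit speed and fills $B_n$ by time $(r+1)n$.

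For (ii) and (iii), I would use the bounds on $k$-filled rows already noted in the text: a length-$n$ row is $k$-filled with probability at least $c_k \cdot \min(1, np^k)$ for some constant $c_k > 0$. Taking $\bm$ as the largest integer in $[1,r]$ with $np^{r-\bm} < 1$, rows demanding $k \le r-\bm-1$ are each $k$-filled with probability bounded below by a constant, while rows demanding $k \in \{r-\bm, \ldots, r\}$ jointly contribute $\prod_{k=r-\bm}^{r} np^k = n^{\bm+1}\, p^{(\bm+1)(2r-\bm)/2}$. Splitting $B_n$ into $\lfloor n/r \rfloor$ disjoint horizontal $r$-row strips yields independent packing events, and the probability that no such strip is packed is at most $\exp\!\left(-c\, n^{\bm+2}\, p^{(\bm+1)(2r-\bm)/2}\right)$. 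With the choice $\bm = \hm$ and the declared $\gamma$ this equals $\exp(-c\,(np^\gamma)^{\hm+2})$, which vanishes under $n \gg p^{-\gamma}$.

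The point that needs care is justifying the choice $\bm = \hm$ under the hypothesis $n \gg p^{-\gamma}$. The compatibility $np^{r-\hm} < 1$ is equivalent to $\gamma < r - \hm$, and the identity $(r-\hm) - (\hm+1)(2r-\hm)/(2(\hm+2)) = (2r - \hm^2 - 3\hm)/(2(\hm+2))$ shows this in turn is exactly the inequality $\hm^2 + 3\hm < 2r$, i.e.\ the defining property of $\hm = \lceil(\sqrt{9+8r}-5)/2\rceil$. For $n$ larger than $p^{-(r-\hm)}$ the true optimizer $\bm(n)$ drops below $\hm$, but the packing probability is monotone nondecreasing in $n$, so the spanning statement persists throughout the whole range $n \gg p^{-\gamma}$. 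A final appeal to Lemma~\ref{lemma-ub-easy} with $n = \lfloor t/(r+1) \rfloor$ for any sequence $t \gg p^{-\gamma}$ gives $T \le t$ with high probability, which is the asserted $T \lesssim t$.
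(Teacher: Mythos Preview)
Your proof is correct and follows exactly the packed-strip strategy laid out in the paragraphs preceding the lemma; the paper's own proof is just the one-line verification that $\bm(\gamma)=\hm$, deferring everything else to that setup and to Lemmas~\ref{lemma-prelim-1} and~\ref{lemma-ub-easy}. One small point worth noting: to conclude $\bm(n)=\hm$ on the range $p^{-\gamma}\ll n< p^{-(r-\hm)}$ you also need $np^{r-\hm-1}\ge 1$, i.e.\ $\gamma\ge r-\hm-1$ (otherwise the rows with $k\le r-\hm-1$ are not yet $k$-filled with constant probability), and this companion inequality is precisely the check the paper spells out---it is equivalent to $-(\hm+1)^2-3(\hm+1)+2r\le 0$, which holds because $\hm+1$ fails the defining inequality for $\hm$.
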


\begin{proof} 
We need to check that, for the choice of $\gamma$ in (\ref{gamma-choice}), $\bm(\gamma)=\hm$; this is equivalent 
to $-(\hm+1)^2-3(\hm+1)+2r\le 0$, which holds by definition of $\hm$. 
The last statement follows from 
Lemmas~\ref{lemma-prelim-1} and~\ref{lemma-ub-easy}.
\end{proof}
We will prove the matching lower bounds in the next subsection, so in 
fact $\gamma$ is the critical power. Its values, together with the corresponding $\hm$'s,
for small $r$ are in Table~\ref{table-bp}. 
For large $r$, $\hm=\sqrt{2r}+\mathcal O(1)$, and consequently
$$
\gamma= r-\sqrt{2r}+\mathcal O(1).
$$

\begin{table}
    \centering
    \begin{tabular}{|c||c|c|c|c|c|c|c|c|c|c|c|c|c|c|c|c| c|c|c|c|c|}
    \hline 
    $r$ & 1 & 2 & 3 & 4 & 5 & 6 & 7 & 8 & 9 & 10 & 11 & 12 & 13 & 14 & 15 & 16& 17& 18& 19& 20\\ [3mm]
    \hline
   $\gamma_c$ & $\frac{1}{2}$ & 1 & $\frac{5}{3}$ & $\frac{7}{3}$ & 3 & $\frac{15}{4}$ & $\frac{9}{2}$ & $\frac{21}{4}$ & 6 & $\frac{34}{5}$ & $\frac{38}{5}$ & $\frac{42}{5}$ & $\frac{46}{5}$ & 10 & $\frac{65}{6}$ & $\frac{35}{3}$& $\frac{25}{2}$& $\frac{40}{3}$& $\frac{85}{6}$ & $15$\\ [2mm]
   \hline
   $\hm$ & $0$ & 0 & $1$ & $1$ & $1$& $2$ & 
  $2$ & $2$ & $2$ & $3$ & $3$ & $3$ & $3$ & $3$ & $4$ & $4$& $4$& $4$& $4$& $4$\\ [2mm]
    \hline
\end{tabular}
 \caption{The critical powers $\gamma_c$ and the corresponding $\hm$ for small thresholds.}
    \label{table-bp}
    \end{table}

\medskip

\subsection{Lower bounds}\label{sec-bp-lower}

\begin{lemma}\label{lemma-bp-lower} 
For $r\ge 2$, let $\hm$ and $\gamma$ be as in Lemma~\ref{lemma-bp-upper}.
Then $p^{-(\gamma-1)}\ll n\ll p^{-\gamma}$  implies $\prob{B_n\text{ spanned}}\to 0$. Moreover, 
if $t\ll p^{-\gamma}$, then $\prob{T\le t}\to 0$. 
\end{lemma}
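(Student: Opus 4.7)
The plan is to combine the periodic-boundary reduction of Lemma~\ref{lemma-lb-easy} with a ``red-cross'' sequential construction that produces an inert over-covering of $\xi_0$. Both claims reduce to showing that, on $B_n$ with periodic boundary, $\prob{\mathbf 0 \in \xi_\infty} \to 0$ whenever $n \ll p^{-\gamma}$: by translation invariance this equals the expected density of $\xi_\infty$, and Lemmas~\ref{lemma-lb-easy} and~\ref{lemma-markov} then yield $T \ge n/(2\rho)$ with high probability, which proves $\prob{T \le t} \to 0$ upon choosing $2\rho t \ll n \ll p^{-\gamma}$ and simultaneously rules out spanning at $n \asymp p^{-\gamma}$.

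To build the over-covering, I would process the sites of $B_n$ in some order and maintain a growing set $R$: at each $x$ examined, if $x \notin R$ and $|\cN_x \cap (\xi_0 \setminus R)| \ge r - \hm$, I add both the horizontal and the vertical line through $x$ to $R$. The threshold $r-\hm$ is chosen so that $R \cup \xi_0$ is inert under the threshold-$r$ rule: otherwise, some $x \notin R \cup \xi_0$ would have at least $r$ neighbors in $R \cup \xi_0$ but, by the stopping criterion, fewer than $r - \hm$ of them in $\xi_0 \setminus R$, forcing at least $\hm + 1$ of the neighbors to lie on red lines. On the event that $R \cup \xi_0$ is inert, $\xi_\infty \subseteq R \cup \xi_0$, so a union bound reduces the proof to showing that $\prob{\mathbf 0 \in R \cup \xi_0}$ and $\prob{R \cup \xi_0 \text{ is not inert}}$ both tend to $0$. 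The first is at most $p + O(np^{r-\hm})$, and the inequality $\gamma \le r - \hm$, equivalent to the defining condition $\hm^2 + 3\hm \le 2r$ of $\hm$, makes this vanish as $p \to 0$ whenever $n \ll p^{-\gamma}$.

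The substantive part is the inertness failure. Parameterizing the offending $x$ by the number $i \in \{\hm + 1, \ldots, r\}$ of its red neighbors and by the number $j \le i/2$ of red crosses contributing both axes to $\cN_x$, a BKR-type union bound that uses $r - \hm$ disjoint black witnesses behind each of the $i - j$ distinct red lines together with the $r - i$ extra black neighbors of $x$ produces a contribution of order $n^{i - 2j + 2} p^{(r - \hm)(i - j) + r - i}$. The main obstacle I anticipate is that this direct BKR bound, even after optimization over $(i, j)$, only reproduces the weaker exponent $r - \min_{a \in [0, r-1]} (r + (a+1)^2)/(a + 3)$, which falls strictly below $\gamma$ as soon as $\hm \ge 2$ (for instance at $r = 6$ it yields $11/3$ while the target is $\gamma = 15/4$). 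Closing this gap is the real work: the natural remedy is to iterate the red-cross construction through several generations, letting later generations treat earlier red lines as free inputs while tightening the effective threshold, and then to verify that the resulting multi-scale optimization forces the extremal offending configuration to be the packed strip of Section~\ref{sec-bp-upper}, thereby matching the exponent of Lemma~\ref{lemma-bp-upper}.
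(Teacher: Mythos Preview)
Your reduction to the periodic torus and your single-pass red-cross construction with threshold $r-\hm$ are exactly the approach the authors attempted first (it survives in the paper only as a commented-out block). You are right that it yields the exponent $r-\min_a\frac{r+(a+1)^2}{a+3}$, which falls short of $\gamma$ once $\hm\ge 2$; you correctly flag this as the gap.

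Where your proposal does not yet close the gap is in the ``remedy'' paragraph. Iterating red crosses is indeed the idea, but the scheme you describe --- treating earlier red lines as free inputs while \emph{tightening} the effective threshold at each generation --- is not what ends up working, and it is not clear it can be made to work: lowering the threshold at each stage makes more and more lines red, and you lose control over the size of $R$. The paper's argument keeps the full threshold $r$ at every step. What changes across steps is that previously created (``blue'') lines are allowed to count toward the $r$ neighbors, and the crucial bookkeeping object is not the event ``$R\cup\xi_0$ is inert'' but the number $L_m^{(\ell)}$ of \emph{$m$-clumps} of blue lines (sets of $m$ parallel lines within distance $3\rho$) after step $\ell$, together with its increment $\Delta L_m^{(\ell)}$. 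The inductive lemma is
\[
L_m^{(\ell)}\lep \beta(m):=\gamma(m+1)-\tfrac{m(2r-m+1)}{2},\qquad
\Delta L_m^{(\ell)}\lep \beta(\ell)\ \text{ for }m\le \ell,
\]
proved by an FKG-based conditioning and an optimization over how a new $m$-clump can be assembled from an existing $a$-clump plus perpendicular $b_i$-clumps. Since $\beta(\hm+1)=0$, the comparison dynamics adds no new lines at step $\hm+1$, so it terminates after a bounded number of rounds with $L_1^{(\hm)}\lep \gamma-1\ll n$; Lemma~\ref{lemma-lb-easy} then finishes. The packed-strip configuration does emerge as the extremizer, but only through this clump-by-clump optimization, not through a BKR bound on a single over-covering.

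So: your diagnosis is accurate, and the instinct to iterate is correct, but the mechanism you sketch (successive threshold lowering) is not the one that delivers $\gamma$. The missing ingredient is tracking clump sizes across rounds with threshold $r$ held fixed, and proving the increment bound $\Delta L_m^{(\ell)}\lep\beta(\ell)$ that forces termination at step $\hm+1$.
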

 
 
We use the following growth dynamics as an upper bound for our process on $B_n$ with periodic boundary. As will be the case throughout the paper, we find a color-coding useful. Recall 
that initially occupied and non-occupied points are black and white. We will also 
use red and blue colors, none of which are present initially. Fix an ordering of the vertices
of $B_n$. At step $\ell$, we examine the white vertices in order, and if a vertex has at least a combined number $r$ of black and blue neighbors, then it is marked red, 
along with all points in the same row and column of $B_n$. At
the end of a step, all red points switch to blue. Note that if a vertex is marked red before it is 
examined, then it will be skipped.


For an integer $m\ge 1$, we say that a collection of $m$ rows (resp. columns) form an 
{\it $m$-clump\/} if they are within distance $3\rho$ 
of each other and they are fully blue. 
(The clumps are not necessarily
disjoint, so for example every $m$-clump contains $\binom m {k}$ $k$-clumps for $k<m$.)
Let $L_m^{(\ell)}$ denote the number of $m$-clumps of lines (rows or columns) after round $\ell$. We let $L_0^{(\ell)}=n$. 
Let $\Delta L_m^{(\ell)} = L_m^{(\ell)} - L_m^{(\ell-1)}$ denote 
the number of new $m$-clumps added at step $\ell$.

Let $\gamma$ be given by (\ref{gamma-choice}). For $m\ge 0$, let 
$$\beta(m)=\gamma(m+1) - \frac{m(2r-m+1)}{2}=\gamma(m+1) - rm+\frac{m(m-1)}{2}.
$$ 
Recall that $\hm$ is the largest integer $m$ for which $-\gamma+r-m>0$. Also, 
$\beta(\hm+1)=0$. 


\begin{lemma} \label{lemma-bp-lb-dec}
For $0\le m\le \hm$, $\beta(m)+m$ is a (strictly) decreasing function of $m$. Consequently, 
$\beta(m)+m\le\gamma$. 
\end{lemma}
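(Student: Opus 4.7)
The plan is to prove this by a direct one-step difference computation, since both the monotonicity and the bound it yields will drop out algebraically from the definition of $\beta$ and the defining inequality for $\hm$.

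First, I would compute the telescoping difference
\[
\bigl[\beta(m+1)+(m+1)\bigr]-\bigl[\beta(m)+m\bigr].
\]
Using $\beta(m)=\gamma(m+1)-rm+\tfrac{m(m-1)}{2}$, the terms $\gamma(m+1)$, $rm$, and the quadratic piece combine very cleanly: the $\gamma$-contribution gives $\gamma$, the $r$-contribution gives $-r$, and $\tfrac{(m+1)m-m(m-1)}{2}=m$, so after including the extra $+1$ from the $m$-shift the difference simplifies to $\gamma-r+m+1$.

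Next I would use the definition of $\hm$ to conclude strictness. By hypothesis $\hm$ is the largest integer with $-\gamma+r-m>0$, so $\hm<r-\gamma$ and thus for every $m$ with $0\le m\le \hm-1$ we have $m+1\le\hm<r-\gamma$, which gives $\gamma-r+m+1<0$. This is exactly the strict negativity of the difference computed above on the range where we need strict decrease, so $\beta(m)+m$ is strictly decreasing on $\{0,1,\ldots,\hm\}$.

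Finally, for the consequence, I would evaluate at $m=0$: $\beta(0)+0=\gamma\cdot 1-0+0=\gamma$. The strict decrease then yields $\beta(m)+m\le \gamma$ for all $0\le m\le \hm$, with equality only at $m=0$. There is no real obstacle here — the only thing to be careful about is matching conventions, namely that $\hm<r-\gamma$ strictly (which the text states explicitly as "$\hm$ is the largest integer $m$ for which $-\gamma+r-m>0$"), so that the inequality $\gamma-r+m+1<0$ is strict throughout the claimed range and not merely non-strict at the endpoint.
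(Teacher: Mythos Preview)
Your proof is correct and is essentially the same as the paper's: both compute the one-step difference and use that $\hm$ is the largest integer with $-\gamma+r-m>0$. The only cosmetic difference is that the paper shows $\beta(m)-\beta(m+1)=-\gamma+r-m>1$ (hence $\beta(m)+m$ drops strictly), whereas you compute the difference of $\beta(m)+m$ directly as $\gamma-r+m+1<0$; these are the same computation rearranged.
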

\begin{proof} 
For $m<\hm$,
\begin{align*}
\beta(m)-\beta(m+1)=-\gamma+r-m =-\gamma+r-(m+1)+1>1, 
\end{align*}
which, together with $\beta(0)=\gamma$, implies the two statements.
\end{proof}

\begin{lemma} \label{lemma-bp-lb-key} Assume $n\ll p^{-\gamma}$. 
For each $1\le \ell\le \hm+1$ and $0\le m\le \hm+1$,
 \begin{align*}
 L_m^{(\ell)} \lep \beta(m).
 \end{align*}
Moreover, for each $1\le \ell\le \hm+1$ and $1\le m\le \ell$,
 \begin{align*}
\Delta L_m^{(\ell)} \lep \beta(\ell).
 \end{align*}
 \end{lemma}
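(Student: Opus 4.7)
I plan to prove Lemma~\ref{lemma-bp-lb-key} by simultaneous induction on $\ell$, invoking Markov's inequality (Lemma~\ref{lemma-markov}) throughout to convert expectation bounds into the required $\lep$ statements.

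For the base case $\ell = 1$, no blue lines exist at step $0$, so a fresh $m$-clump is simply $m$ lines of $B_n$ within pairwise distance $3\rho$, each containing a nucleation vertex with at least $r$ initially occupied (black) neighbors in its cross. A union bound over the anchor line ($O(n)$ positions), the remaining $m-1$ lines ($O(1)$ nearby offsets), and one witnessing vertex per line ($O(n)$ positions each), with a BKR-type argument supplying disjoint collections of black witnesses, yields $\E[\Delta L_m^{(1)}] \lesssim n^{m+1} p^{mr} \lep (m+1)\gamma - mr = \beta(m) - \binom{m}{2} \le \beta(m)$. The $m = 1$ case gives exactly $\lep \beta(1) = \beta(\ell)$, verifying (b).

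For the inductive step, assume (a) and (b) hold for all $\ell' < \ell$. To bound $\Delta L_m^{(\ell)}$ with $1 \le m \le \ell$, I decompose a fresh $m$-clump $\cC$ as one new line $\cL$ formed at step $\ell$ together with an $(m-1)$-clump $\cC'$ present at step $\ell-1$ and within $3\rho$ of $\cL$. The dynamical key observation is that the nucleating vertex $x\in\cL$ was white at step $\ell-1$ with strictly fewer than $r$ black-plus-blue neighbors, so at least one blue line through $\cN_x$ first appeared at step $\ell-1$. Iterating this backward produces a chain of catalyst lines $\cL^{(1)},\ldots,\cL^{(\ell)}=\cL$, where $\cL^{(g)}$ is fresh at step $g$ and passes through the cross neighborhood of the nucleating vertex on $\cL^{(g+1)}$. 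I then union-bound over such chains by placing the first catalyst vertex in $O(n^2)$ positions with factor $p^r$ and each subsequent catalyst vertex in $O(n)$ positions along the previous catalyst line with factor $p^{r-k_g}$, where $k_g\le g-1$ denotes the number of earlier catalysts in its cross. The dominant ``staircase'' $k_g=g-1$ delivers $n^{\ell+1}p^{\ell r - \binom{\ell}{2}}\lesssim p^{-\beta(\ell)}$ chains; multiplying by the localized count of $(m-1)$-clumps near $\cL$ (which is $\lep \beta(m-1)-\gamma$, since the clump's position factor $n$ is absorbed by the constraint that it sit within $3\rho$ of $\cL$, and using the inductive hypothesis together with $\beta(m-1)\le\gamma$) yields $\Delta L_m^{(\ell)}\lep \beta(\ell)$ by Markov's inequality. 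Part (a) then follows from $L_m^{(\ell)}=L_m^{(\ell-1)}+\Delta L_m^{(\ell)}$ and Lemma~\ref{lemma-bp-lb-dec}, which provides $\beta(\ell)\le\beta(m)$ for $m\le\ell$; the residual case $m>\ell$ (needed for (a) but not for (b)) is handled by an analogous all-sequential counting of the clump's own $m$ formations, yielding $\Delta L_m^{(\ell)}\lep\beta(m)$ directly.

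The main technical obstacle is the careful decoupling of the catalyst chain from the residual $(m-1)$-clump so that their respective nucleation events can be witnessed by disjoint sets of initially occupied sites, allowing BKR (or an equivalent direct independence argument) to yield the product bound above. Verifying that the staircase $\sum_g k_g=\binom{\ell}{2}$ minimizes the $p$-cost among all catalyst configurations -- equivalently, that fully sequential formation dominates over every parallel splitting of blue neighbors among the $\ell$ nucleation events -- is the combinatorial heart of the argument, and is ultimately guaranteed by the strict monotonicity of $\beta(m)+m$ furnished by Lemma~\ref{lemma-bp-lb-dec}.
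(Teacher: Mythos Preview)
Your catalyst-chain strategy is appealing and correctly identifies the dominant ``staircase'' scenario, but it has a genuine gap that I do not see how to close without reverting to the paper's decomposition.

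The problem is the factor $p^{r-k_g}$ you attach to the $g$-th nucleating vertex $x^{(g)}$. For the chain to be realized you need $x^{(g)}$ to have $r$ black-plus-blue neighbours at step $g$, and your $k_g\le g-1$ only counts the earlier \emph{catalyst} lines through $\cN_{x^{(g)}}$. But at step $g$ there may be many \emph{other} blue lines --- produced by nucleation events entirely disjoint from your chain --- passing through $\cN_{x^{(g)}}$, and each such line further reduces the number of black witnesses required. Consequently $\prod_g p^{r-k_g}$ is a \emph{lower} bound on the realization probability of the chain, not an upper bound, so summing it over chain positions does not control the expected number of realized chains and hence does not control $\Delta L_1^{(\ell)}$. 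To repair this you must, at each $x^{(g)}$, sum over the size $b$ of the perpendicular blue clump present there together with the residual $r-k_g-b$ black sites; the number of such $b$-clumps is exactly $L_b^{(g-1)}$, so the argument collapses into the paper's recursive bound
\[
\Delta L_m^{(\ell+1)}\le C\,\Delta L_a^{(\ell)}\prod_i L_{b_i}^{(\ell)}p^{\,r-a-b_i}
\quad\text{or}\quad
C\,L_a^{(\ell)}\prod_i \Delta L_{b_i}^{(\ell)}p^{\,r-a-b_i},
\]
followed by optimisation over $a$ and the $b_i$. The staircase you isolate corresponds exactly to the choice $a=m-1$, $b_i=1$ that wins this optimisation, but you have not shown it wins; the monotonicity of $\beta(m)+m$ from Lemma~\ref{lemma-bp-lb-dec} is used in the paper precisely to carry out that optimisation, not to justify ignoring non-catalyst blue help. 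Note also that the paper uses an FKG negative-correlation argument (conditioning on the exact coloured set is a decreasing event off those lines), not BKR disjointness; the black witnesses at different nucleating vertices need not be disjoint.

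A secondary issue: your decomposition of a fresh $m$-clump at step $\ell$ into one fresh line plus an $(m-1)$-clump ``present at step $\ell-1$'' is not valid when several of the $m$ lines are fresh at step $\ell$; the residual $(m-1)$-clump is then only guaranteed to exist at step $\ell$, and invoking $L_{m-1}^{(\ell-1)}\lep\beta(m-1)$ is circular. The paper handles this by letting $a\in[0,m-1]$ be the number of old lines and treating all $m-a$ fresh lines simultaneously, each with its own helper clump.
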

 \begin{proof} 
Let $\cL$ be a fixed (deterministic) set of red and blue (horizontal and vertical) lines 
in $B_n$. Condition on the event $F$ that
exactly they are colored at a certain point by the described comparison occupation dynamics.
By FKG, under this conditioning, 
the probability of any increasing event $E$ depending on the configuration 
outside $\cL$ is smaller than the non-conditional probability of $E$.
This is because the event $F$ is decreasing as a function 
of the configuration on $B_n\setminus \cL$.
 
We proceed by induction on $\ell$. For $\ell=1$, the expected number of $m$-clumps 
is at most a constant times $n\cdot (np^r)^m$ and so 
\begin{align*}
 \Delta L_m^{(1)}=L_m^{(1)} \lep (m+1)\gamma-rm\le \beta(m).
 \end{align*}


We now proceed to proving the $\ell\to\ell+1$ induction step, for $\ell\le \hm$. 
An $m$-clump is created by adding $m-a$ parallel lines to some $a$-clump, for some 
$a\in [0, m-1]$. Each of those lines may receive help from a perpendicular $b_i$-clump, where 
$0\le b_i\le r-a$, $i=1,\ldots m-a$. 

We now have two possibilities. The first is that the $a$-clump was created 
at step $\ell$. By the induction hypothesis, for
$a\le \ell$, $\Delta L_a^{(\ell)}\le \beta(\ell)$. As the $\ell$-clumps 
offer the most help for expansion, we may assume 
that 
$a\ge \ell$ and so $m\ge\ell+1$. For fixed $a$ and $b_i$, we use our observation 
on negative correlations due to the FKG inequality and conditioning
on all $L_k^{(\ell')}$ and $\Delta L_k^{(\ell')}$ for $\ell'\le \ell$, to get that,
with probability tending to $1$ as $C\to\infty$,
\begin{align*}
\Delta L_{m}^{(\ell+1)}\le C \Delta L_a^{(\ell)} \prod_i L_{b_i}^{(\ell)}p^{r-a-b_i}.
\end{align*}
It follows from the induction hypothesis that
\begin{align*}
\Delta L_m^{(\ell+1)}&\lep \max_{a, b_i}\left[\beta(a)+\sum_i(a-r+b_i+\beta(b_i))\right]\\
&=\max_{a, b_i}\left[\beta(a)+(m-a)(a-r)+\sum_i(b_i+\beta(b_i))\right]\\
&= \max_{a}\left[\beta(a)+(m-a)(a-r+\gamma)\right]\\
&= \max_{a}\left[\beta(m)-\frac12(m-a)(m-a-1)\right]\\
&=\beta(m).
\end{align*}
Here, we used Lemma~\ref{lemma-bp-lb-dec} on the third line, and the observation that 
the maximum on the penultimate line is achieved at $a=m-1$.

The second possibility is that {\it every\/} $b_i$-clump was created at step $\ell$. In this 
case, the inequality that holds with probability tending to $1$ as $C\to\infty$ is
\begin{align*}
\Delta L_{m}^{(\ell+1)}\le C L_a^{(\ell)} \prod_i \Delta L_{b_i}^{(\ell)}p^{r-a-b_i}.
\end{align*}
We may assume that $a\le \hm$, as it follows from the induction hypothesis that 
there is no $(\hm+1)$-clump at step $\ell$. When $m\ge \ell$,
\begin{align*}
\Delta L_m^{(\ell+1)}&\lep \max_{a, b_i}\left[\beta(a)+\sum_i(a-r+b_i+\beta(\max(b_i,\ell)))\right]\\
&=\max_{a, b_i\ge \ell}\left[\beta(a)+\sum_i(a-r+b_i+\beta(b_i))\right]\\
&=\max_{a, b_i\ge \ell}\left[\beta(a)+(m-a)(a-r)+\sum_i(b_i+\beta(b_i))\right]\\
&= \max_{a}\left[\beta(a)+(m-a)(a-r+\ell+\beta(\ell))\right]\\
&\le \max_{a}\left[\beta(a)+(m-a)(a-r+\gamma)\right]\\
&=\beta(m), 
\end{align*}
as before. One the other hand, when $m\le \ell$, we join the above computation 
on the fourth line, and use $\beta(a)\le \gamma-a$,  to get
\begin{align*}
\Delta L_m^{(\ell+1)}&\lep \max_{a}\left[\gamma-a+(m-a)(a-r+\ell+\beta(\ell))\right].
\end{align*}
Differentiating the expression inside the maximum with respect to $a$, we obtain
\begin{align*}
m+r-2a-\beta(\ell)-\ell-1&\ge m+r-2(m-1)-\gamma-1\\
&=r+1-m-\gamma\\
&\ge r-\hm-\gamma>0,
\end{align*}
which implies that the maximum is obtained at $a=m-1$, and then
\begin{align*}
\Delta L_m^{(\ell+1)}&\lep \gamma-m+1+m-1-r+\ell+\beta(\ell)\\
&=\gamma-r+\ell+\beta(\ell)\\
&= \beta(\ell+1).
\end{align*}
This completes verification of the induction step and ends the proof.
 \end{proof}
 
 \begin{proof}[Proof of Lemma~\ref{lemma-bp-lower}] By Lemma~\ref{lemma-bp-lb-key}, 
$\Delta L_1^{(\hm+1)}=0$ with probability tending to $1$ as $p\to 0$, which implies 
that our comparison dynamics stops with high probability at step $\hm$, 
which is independent of $p$. By 
Lemma~\ref{lemma-bp-lb-key} and Lemma~\ref{lemma-bp-lb-dec}, 
$L_1^{(\hm)}\lep\gamma-1$ and so $L_1^{(\hm)}\ll p^{-(\gamma-1)}\ll n$ with high probability. 
Therefore, after the step $\hm$, the number of blue points is $o(n^2)$ with high probability. 
The number of black points has expectation $pn^2$ and is therefore also 
$o(n^2)$ with high probability. This proves the first statement and 
Lemma~\ref{lemma-lb-easy} proves the second one. 
 \end{proof}
 
 \begin{proof}[Proof of Theorem~\ref{thm-bp}]
 The conclusion follows immediately from 
Lemmas~\ref{lemma-bp-upper} and~\ref{lemma-bp-lower}.
 \end{proof}

\section{Symmetric line growth: general upper bound}\label{sec-line-sym-upper}


\begin{lemma}\label{lemma-line-sym-upper} Assume $r\ge 2$. If $t\gg {p^{-(r-1)}}\log\frac1p$, then $T\lesssim t$. Moreover,  if $n\gg p^{-(r-1)}$, then $B_n$ is spanned with high probability.
\end{lemma}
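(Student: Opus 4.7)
The plan is to establish spanning of $B_n$ for $n \gg p^{-(r-1)}$ via a multi-stage cascading argument, and to deduce the time bound via Lemma~\ref{lemma-ub-easy}. Throughout, I write $n = p^{-(r-1)} f(p)$ with $f(p) \to \infty$; I will take $f = \log(1/p)$ for the time bound. Call a row of $B_n$ a \emph{level-$1$ H-line} if it contains $r$ initial black sites inside a common window of length $\rho$; such a configuration fills the entire row by iterative horizontal propagation in $O(n)$ CA steps (once contiguous, the range extends by $\rho$ per step). The expected number of level-$1$ H-lines is $\asymp n^2 p^r = f^2 p^{-(r-2)}$, which diverges for $r\ge 2$ and $f\to\infty$ and concentrates around its mean by second-moment; the analogue holds for V-lines.

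I would then cascade upward. Inductively, a \emph{level-$(k+1)$ V-line} at column $c$ is nucleated if one can find $r$ sites on column $c$ within a common length-$\rho$ window, each of which is either an initial black site or lies on an H-line of level $\le k$ crossing $c$; level-$(k+1)$ H-lines are defined symmetrically. Once the $r$ helpers are in place, the new line fills by directional propagation in $O(n)$ time. A first-moment computation, combined with FKG-type inequalities to control correlations among nucleation events, shows that the cumulative line count $L_k$ (H and V combined) grows by a geometric factor in $f$ per stage until saturating at $L_k = \Theta(n)$, at which point every row and column of $B_n$ is a fully occupied line and $B_n$ is spanned. The number of stages to saturation is $O(\log\log(1/p))$ for the choice $f = \log(1/p)$, and is slowly-growing but finite for any $f \to \infty$.

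The crucial timing observation is that the cascade stages need not be executed sequentially in the CA clock: as a level-$k$ line propagates at speed $\rho$, any level-$(k+1)$ nucleation that relies on it is triggered locally as soon as the line has arrived within $\rho$ of the target column/row. Consequently the full cascade completes in a time bounded by the time for a single line to propagate across $B_n$, namely $O(n/\rho) = O(n)$, \emph{independent} of the number of stages. This yields spanning of $B_n$ within $O(n)$ CA steps with high probability; for the second statement any $f \to \infty$ suffices, while for the time bound I take $f = \log(1/p)$, so that $n = t = p^{-(r-1)}\log(1/p)$ and Lemma~\ref{lemma-ub-easy} gives $T \lesssim t$. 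The main obstacle is making the timing argument rigorous: formally proving that level-$(k+1)$ nucleations really do occur ``on the fly'' as level-$k$ lines propagate — so that the spanning time does not accumulate an extra factor proportional to the number of stages — requires careful geometric bookkeeping of the propagation fronts, together with FKG and sprinkling arguments to handle correlations between nucleation events that compete for the same initial black sites.
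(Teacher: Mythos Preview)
Your cascade idea is in the right spirit, but there is a genuine gap in the timing argument, and the paper proceeds quite differently.

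First, your stage count is off. If $L_k$ grows by a geometric factor $f=\log(1/p)$ per stage, then reaching saturation $L_k=\Theta(n)$ from $L_1\asymp f^2 p^{2-r}$ requires $f^k\asymp 1/p$, i.e.\ $k\asymp\log(1/p)/\log\log(1/p)$, not $O(\log\log(1/p))$. With the naive bound of $O(n)$ CA time per stage this yields a spanning time of order $p^{-(r-1)}(\log(1/p))^2/\log\log(1/p)$, which is too large for the first assertion.

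Second, the ``on-the-fly'' claim that the whole cascade nevertheless completes in time $O(n)$ is exactly the hard part, and you have not proved it. Note that such a claim cannot hold at $n=Ap^{-(r-1)}$ with constant $A$, since it would then give $T\lesssim p^{-(r-1)}$, contradicting Theorem~\ref{thm-liner2} for $r=2$; so any valid argument must exploit $f\to\infty$ in an essential way, and the geometric bookkeeping you allude to is doing all the work.

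The paper sidesteps both issues. It works in a box of side $n(r-1)$ with $n=Ap^{-(r-1)}$ for a large \emph{constant} $A$, partitioned into $(r-1)\times(r-1)$ lots. A lot is \emph{bottom-filled} (resp.\ \emph{left-filled}) if its bottom row (resp.\ left column) consists of $r-1$ black sites, an event of probability $p^{r-1}$. Starting from the row containing the origin, a marking procedure alternately marks bottom-filled and left-filled lots in adjacent rows and columns; because the expected number of filled lots per line is $\ge A/2$, this dominates a supercritical branching process, and with high probability produces $n/5$ marked lots within $\log n$ generations. A sprinkling step then guarantees one marked lot is a \emph{nucleus} (its $r-1$ black sites extend to a run of $r$). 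Occupation propagates from this nucleus back through the tree of marked lots to the origin, taking $O(n)$ per generation and hence $O(n\log n)=O(p^{-(r-1)}\log(1/p))$ in total. Thus the logarithm arises explicitly as the tree depth, and no on-the-fly argument is needed. For the second assertion a further sprinkling produces $r$ adjacent occupied lines, which span $B_n$.
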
 

\begin{proof} 
Let $n=A/p^{r-1}$.
Divide the $n(r-1)\times n(r-1)$ box $S$ with $\mathbf 0$ at its lower left corner
into $(r-1)\times (r-1)$ subboxes, which we call {\it lots\/}. 
A lot is {\it bottom-filled\/} (resp., {\it left-filled\/}) 
if its bottommost row (resp., leftmost column) is all black. 
A bottom-filled (resp., left-filled) lot is a {\it nucleus\/} if the bottommost row 
(resp., leftmost column) is part 
of a horizontal (resp., vertical) interval of $r$ black sites. 

In the remainder of the proof, we consider the density of black sites to be $2p$ 
instead of $p$, which does not affect our claim. The black sites thus 
stochastically dominate the union of two independent configurations, 
each with density $p$. We call the two the {\it basic\/} and the {\it sprinkled\/} 
configuration. Until further notice, all events will be with respect to the 
basic configuration.

Let $n=A/p^{r-1}$.
Divide the $n(r-1)\times n(r-1)$ box $S$ with $\mathbf 0$ at its lower left corner
into $(r-1)\times (r-1)$ subboxes, which we call {\it lots\/}. 
A lot is {\it bottom-filled\/} (resp., {\it left-filled\/}) 
if its bottommost row (resp., leftmost column) is all black. 
A bottom-filled (resp., left-filled) lot is a {\it nucleus\/} if the bottommost row 
(resp., leftmost column) is part 
of a horizontal (resp., vertical) interval of $r$ black sites.

Within this proof, the columns 
of $S$ are columns of lots, and analogously for rows. Thus $S$ has $n$ rows and 
columns. 

We will create a set of marked lots by the following sequential procedure. At each 
step, in addition to newly marking lots, we enlarge a set of rows and columns that are removed 
from future consideration and are called {\it stamped\/}. Initially, no lots are marked 
and we stamp the rightmost column and the topmost row.  

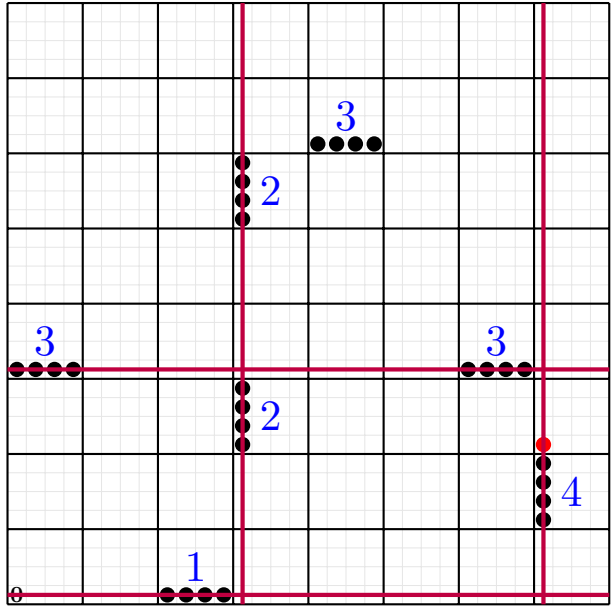
\begin{figure}
\begin{center}
\begin{tikzpicture}
\draw[step=0.25cm,faint,very thin] (0,0) grid (8,8);
\draw[step=1cm,black,thick] (0,0) grid (8,8);


\node at (0.125,0.125) {\scalebox{0.8}{{\color{black}$\mathbf{0}$}}};

 \foreach \y in {0,1,2,3}
  \fill [black] (2.125+0.25*\y,0.125) circle (2.9pt);
  
  \node at (2.5,0.5) {\scalebox{1.5}{{\color{blue}$1$}}};
  
 \foreach \y in {0,1,2,3}
  \fill [black] (0.125+0.25*\y,3.125) circle (2.9pt);
  
  \node at (0.5,3.5) {\scalebox{1.5}{{\color{blue}$3$}}};
  
  \foreach \y in {0,1,2,3}
  \fill [black] (6.125+0.25*\y,3.125) circle (2.9pt);
  
  \node at (6.5,3.5) {\scalebox{1.5}{{\color{blue}$3$}}};
  
  \foreach \y in {0,1,2,3}
  \fill [black] (4.125+0.25*\y,6.125) circle (2.9pt);
  
  \node at (4.5,6.5) {\scalebox{1.5}{{\color{blue}$3$}}};
  
  \foreach \y in {0,1,2,3}
  \fill [black] (3.125, 2.125+0.25*\y) circle (2.9pt);
  
  \node at (3.5,2.5) {\scalebox{1.5}{{\color{blue}$2$}}};
 
  \foreach \y in {0,1,2,3}
  \fill [black] (3.125, 5.125+0.25*\y) circle (2.9pt);
  
  \node at (3.5,5.5) {\scalebox{1.5}{{\color{blue}$2$}}};
  
   \foreach \y in {0,1,2,3}
  \fill [black] (7.125, 1.125+0.25*\y) circle (2.9pt);
  
  \node at (7.5,1.5) {\scalebox{1.5}{{\color{blue}$4$}}};
  
  \fill [red] (7.125, 2.125) circle (2.9pt);
  
  \draw [color=purple,ultra thick] (7.125,0)--(7.125,8);
  \draw [color=purple,ultra thick] (0, 3.125)--(8, 3.125);
  \draw [color=purple,ultra thick] (3.125,0)--(3.125,8);
   \draw [color=purple,ultra thick] (0, 0.125)--(8, 0.125);

\end{tikzpicture}
\end{center}
\caption{An example of succesful occupation of $\mathbf 0$ 
in our branching construction for $r=5$ and $n=8$. The 
black circles are occupied in the basic configuration, as detected by the 
branching construction; the blue numbers indicate the generation at which a lot 
is marked.  The one red circle is occupied in the sprinkled configuration. The purple lines are 
successively occupied in the order $4-3-2-1$.}
\label{fig-line-supercr}
\end{figure} 

In the first step, we inspect the bottom row. If, excluding the lot  
in the stamped column, it contains no bottom-filled
lots, the process stops and the set of marked lots is empty. Otherwise, we 
mark all bottom-filled lots in this row. 
We then stamp the bottom row. 
If the process has not stopped, we proceed to the second step, 
when we consider exactly the columns immediately to the right 
of the newly marked lots. Inspect the columns from left to right. 
In any column under inspection, mark the
left-filled lots that are not in any stamped row and also not in 
the same row with any already marked lots. If no such left-filled lots exist,  
leave that column without marked lots. In any case, 
stamp all the inspected columns. 

In the third step, consider rows which are just above the 
lots marked on the second step, from bottom to top. For each of these rows, do the 
analogous procedure as on the previous step: mark all bottom-filled lots within it 
that are not in any stamped columns and not in the same column as any previously marked lot, 
and stamp it. 
  
Continue with this procedure, alternating rows and columns, until one of two 
contingencies. If in some 
step no new lots are marked, we stop after that step. On the other hand, if at some point the
number of marked lots exceeds $n/5$, we stop immediately, even if this happens within
a step that is not yet complete. 
Observe that, by our construction, 
the column immediately to the right of any 
newly marked lot in a row is not stamped before being inspected, 
and analogous property holds for the rows.

If, at some point in the procedure, the number of marked lots is $m$, and we 
proceed to inspect a row or a column, we check at least $n-2m$ lots and each of them 
is marked independently with probability $p^{r-1}$. If $m\le n/5$, the expected 
number of marked lots is at least $A/2$. Therefore, if $A$ is large enough, the 
marking process dominates a supercritical branching process up to stopping. It follows that, 
for any $\epsilon$, we can choose a large enough $A$ so that the procedure reaches 
$n/5$ marked lots in $\log  n$ steps with probability at least $1-\epsilon$. We now 
condition on this event.  

The key observation is that marked lots transfer the occupation to the origin, provided
at least one of them is ``triggered'' by an additional black site. To be more precise, 
if any of the marked lots is a nucleus, the origin becomes occupied by time $rn\log n$. 

The final step in our proof is a standard sprinkling argument. Under our conditioning, 
the union of the basic and sprinkled black sites generate 
a nucleus with probability at least $1-(1-p)^{n/5}>1-\epsilon$, when 
$A$ is large enough. 

To establish the property of $T$, for a large enough constant $A$ and small enough $p$, 
the probability that the origin is occupied by time $rn\log n\le 2Ar^2p^{-(r-1)}\log\frac1p$ is 
at least $1-\epsilon$.

By another sprinkling argument, using the same construction, we can make $r$ adjacent lines in $B_n$ occupied with probability at least $(1-\epsilon)^r$, which results in $B_n$ being spanned.
\end{proof}

\section{Lower bound for the symmetric line growth with $r=2$}\label{sec-line-2}

In this case, we have a lower bound that asymptotically matches
the bound from Lemma~\ref{lemma-line-sym-upper}. For $r>2$, we prove a weaker bound 
of the form $p^{-(r-1)+o(1)}$ in Section~\ref{sec-line-sym-lower}.

\begin{lemma}\label{lemma-line-lower-2} Assume $r=2$. 
If $t\ll {p^{-1}}\log\frac1p$, then $T\gtrsim t$. 
\end{lemma}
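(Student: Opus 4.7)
The plan is to refine the speed-of-light lower bound by carefully enumerating the cascade structures by which the origin can become occupied. By monotonicity and the fact that occupied sites spread at most $\rho$ in $\ell^1$-distance per time step, if $\mathbf{0}\in\xi_t$ then $\mathbf{0}$ is also occupied by the dynamics restricted to $B_N$ (with $N=2\rho t$ and $0$-boundary). It therefore suffices to show $\prob{\mathbf{0}\in\xi_t^{B_N}}\to 0$ for $t=\lambda p^{-1}\log p^{-1}$ with a sufficiently small constant $\lambda>0$.

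For $r=s=2$, growth occurs exclusively via the filling of horizontal and vertical lines: a site becomes occupied only when two sites in its cross neighborhood on the same line are occupied. Unrolling the causal history of the origin's occupation yields a \emph{zigzag} cascade: a sequence of alternating perpendicular lines $V_1,V_2,\ldots,V_k$ with $V_k\ni\mathbf{0}$, where $V_1$ contains two initial black sites within distance $\rho$ on it, and each $V_i$ (for $i\ge 2$) contains one new initial black site within distance $\rho$ of its crossing with $V_{i-1}$. Since each filled line segment propagates at speed $\rho$, the total $\ell^1$-length of the zigzag path from the deepest nucleation to the origin is at most $\rho t$.

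The number of zigzag configurations of depth $k$ with total length $\le\rho t$ is at most $2\cdot 2^k(\rho t)^k/k!$ (accounting for the orientation of $V_1$, and for the direction and length of each of the $k$ segments), multiplied by $\rho^k$ for the local black-placement offsets at each line. Each configuration requires $k+1$ specific initial black sites, contributing probability $p^{k+1}$. Summing over $k$,
\[
\prob{\mathbf{0}\in\xi_t^{B_N}}\le p+\sum_{k\ge 1}\frac{2(2\rho^2tp)^k p}{k!}\le p+2p\bigl(\exp(2\rho^2tp)-1\bigr).
\]
With $t=\lambda p^{-1}\log p^{-1}$, we have $2\rho^2 tp=2\rho^2\lambda\log p^{-1}$, so $\exp(2\rho^2 tp)=p^{-2\rho^2\lambda}$. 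The total bound is of order $p^{1-2\rho^2\lambda}$, which tends to $0$ as $p\to 0$ whenever $\lambda<1/(2\rho^2)$; invoking Lemma~\ref{lemma-prelim-1} then yields $T\gtrsim p^{-1}\log p^{-1}$.

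The main obstacle is rigorously justifying that every occupation of the origin is captured by such a zigzag with the claimed $k+1$-black cost. In general, the causal tree can branch (two sub-cascades converging at the origin's target line) or share initial blacks across levels, and each scenario must be handled by an inductive argument: at each step of the trace-back we designate a principal sub-cascade and account for the other as a constraint, while geometric restrictions bound the extent of sharing. The crucial point is that sharing and branching do not increase the number of valid configurations faster than the $(\rho^2tp)^k/k!$ growth already controlled, so the above moment estimate survives up to constants.
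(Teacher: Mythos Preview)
Your zigzag path is exactly the paper's \emph{transmissive path}, and your exponential moment bound is essentially the paper's Step~2. So the skeleton is right. The gap is in your final paragraph, which is precisely where the paper does most of its work.

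The difficulty you flag is real and does not dissolve the way you suggest. When you trace the history of a newly occupied site $x$ on a line $L$, its two occupied neighbours $y_1,y_2$ on $L$ may \emph{both} be red, each having been produced by a perpendicular line $L_1,L_2$ through it. These lines $L_1,L_2$ are parallel and within distance $2\rho$. If you ``designate a principal sub-cascade'' through $y_1$, then $y_1$ becomes a turn of your zigzag, and you owe a black site on a line through $y_1$ within bounded distance. Nothing in the picture guarantees one: $y_2$ is red, not black, and the occupied sites on $L_1$ near $y_1$ may themselves be red. So the single-path extraction can simply fail, and with it the $p^{k+1}$ cost you want to charge. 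Relatedly, without a mechanism forcing turns to be well separated, the same black site can serve several turns, again invalidating the $p^{k+1}$ factor.

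The paper does not try to force a single zigzag in all cases. Instead it proves: \emph{if} no two parallel saturated lines lie within distance $5\rho$ in the relevant box, \emph{then} a transmissive path to $\mathbf 0$ exists (this is a careful induction, and the no-close-parallel-lines hypothesis is exactly what rules out the branching obstruction above and guarantees distinctness of the black sites at turns). It then separately bounds the probability that a close parallel pair ever forms, by looking at the \emph{first} such pair and building transmissive paths to each of the two lines; whether or not these two paths merge is handled by two further estimates (the merged case needs a count of constrained paths between two fixed endpoints). All three pieces are needed; your moment bound covers only the first.

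So as a plan this is on the right track, but the sentence ``sharing and branching do not increase the number of valid configurations faster than $(\rho^2tp)^k/k!$'' is asserting exactly the hard part without an argument, and the mechanism you propose for it (principal sub-cascade plus geometric restrictions) is not the one that works.
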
 

\begin{proof}

A {\it neighborhood path\/} is a sequence 
$x_0, x_1,\ldots , x_n\in \mathbb Z^2$ of length $n$, starting at 
$x_0$ which makes neighborhood steps, that is, $x_{i}-x_{i-1}\in \mathcal N_0$, $1\le i\le n$. 
We say that the path makes a {\it turn\/} at $i$, $1\le i\le n-1$, 
if the vectors $x_{i}-x_{i-1}$ and $x_{i+1}-x_{i}$ are perpendicular.  
We call such a path {\it transmissive\/} if: 
\begin{enumerate}
\item[(1)] $x_0$ has two black sites in its neighborhood;  
\item[(2)] at every turn $i$, there 
is a black site on a horizontal or vertical line through $x_i$ within distance $2\rho$; 
\item[(3)] all black sites in (1) and (2) are distinct; and 
\item[(4)] all steps are of size $1$, and between any two turns the steps are 
all in the same coordinate direction and orientation.
\end{enumerate}
At any time, call a line {\it saturated\/} within a box at a given time 
if it contains two or more occupied 
sites within a neighborhood of one of its sites inside of the box.  
When a point becomes occupied at some time $t>0$, we 
call it {\it red\/}. Thus, every occupied point
at time $t$ is either red or black, but not both. 

\medskip
\noindent{\it Step 1\/}. 
Assume that there are no parallel saturated lines within distance $5\rho$ 
of each other at time $t$ for the dynamics restricted to 
$[-2\rho t,2\rho t]^2$, with $0$-boundary, by time $t$. Assume also that $\mathbf 0$ is occupied by time $t$. 
Then we claim that there exists 
a transmissive path ending at $0$ of length at most $\rho t$. 
\medskip
  
Assume that $x$ gets occupied at time $t>0$. We will first
prove by induction that there is a neighborhood path of length at most $t$, ending at $x$, 
that satisfies properties (1) and (2), but not necessarily (3) and (4), and is contained 
entirely in lines that are saturated within $[-2\rho t,2\rho t]^2$ by time $t-1$.

When $t=1$, the path consist of only the site $x$. We now proceed to proving the $t-1\to t$ 
induction step, for $t\ge 2$.
 
At time $t-1$, $x$ has two occupied neighbors, say $y_1$ and $y_2$, so that 
$y_1, y_2, x$ are all on the same line, and so that $y_1$ and $y_2$ are not both black. 
If, say, $y_1$ is black and $y_2$ is red, then we use the 
induction hypothesis on $y_2$, take the resulting neighborhood path that ends at $y_2$ 
and append the step from $y_2$ to $x$. It is possible that the so constructed path 
makes a turn at $y_2$, but then (2) holds.   

Now assume both $y_1$ and $y_2$ are red. By the induction hypothesis, there exist neighborhood 
paths that satisfy (1) and (2) and end at $y_1$ and $y_2$, with penultimate sites 
$z_1$ and $z_2$, with the proviso that. Here, we add the proviso that, if $y_1$ is occupied at time 
$1$, $z_1$ is one of the requisite black sites, and similarly for $y_2$. 
If, say, $x-y_1$ is parallel to the last step $z_1-y_1$ of the path to $y_1$,
we may complete the induction step by appending $x$ to the path to $y_1$. 
We proceed analogously if $x-y_2$ is parallel to the last step $z_2-y_2$. 
In all cases so far, the saturation claim holds for the new path to $x$.
The last possibility is that both $z_1-y_1$ and $z_2-y_2$ are perpendicular to 
$x-y_1$, and thus parallel. By induction, they 
lie on parallel saturated lines, and are within distance $2\rho$, in contradiction with 
our assumption. 

We thus constructed a path that satisfies (1) and (2), and we proceed 
to modify it to satisfy the other two requirements. We satisfy (4) by simply replacing 
all steps between two turns by steps of length 1 on the same line and with the same 
orientation. Now, if our path happens to make a turn at $i$, and $x_i=x_j$ for 
some $i<j$, we eliminate 
the loop $x_{i+1}, \ldots, x_j$ from the path. 
Similarly, we may eliminate a loop if $x_i=x_0$ for some $i>0$. (However, we cannot make the path
self-avoiding, as we cannot eliminate loops if two sites coincide at non-turns.)

Finally, suppose that the path makes turns at $i<j$, and 
$x_i$ and $x_j$ are at distance $5k$ or less. Then we have two parallel saturated 
lines within this distance, which contradicts our assumption. 
The same holds if there is a turn within distance $5\rho$ of $x_0$. 
It follows that (3) is also satisfied, and the claim in Step 1 is proved.

For future use, we consider two distinct red sites $x$ and $y$ that are occupied by time 
$t$. We have two transmissive paths $(x_i)$ and $(y_j)$, one to $x$ and the other to $y$.
If the path to $y$ makes a turn at some $j$ so that $y_j=x_i$ is on the path to $x$, then we 
replace to $x_0,\ldots,x_i$ with the path to $y_0,\ldots, y_j$. Analogously if we switch the 
roles of the two paths or if $x_0$ or $y_0$ are on the other path. After this, we say the two 
paths are {\it $m$-merged\/} if $x_i=y_i$ for $i\le m$.  In addition, 
we also declare them {\it $0$-merged\/} if $x_0$ and $y_0$ are 
within distance $5k\rho$. 

The result is that now any two turns that occur at different sites
use distinct black sites, and any turns use different sites than the 
initial sites. If the paths are not $0$-merged, the initial sites 
use different black sites as well. 


Take time $t=c(1/p)\log(1/p)$. We will choose the constant $c=c(\rho)$ to be 
small enough. Let $F_n$, $n\ge 0$,  be the event that there is a
transmissive path of length $n$ ending at $\mathbf 0$.

\medskip
\noindent {\it Step 2\/}. If $c$ is small enough, 
$
\prob{\cup_{n\le \rho t}F_n}\to 0$ as $p\to 0$.
\medskip

Dividing according to the number $i$ of turns this path makes, we get that, 
for some constant $C=C(\rho)$,
$$
\prob{F_n}\le  C\sum_{i=0}^{n-1} \binom{n}i (Cp)^i p^2
\le C (1+Cp)^{n} p^2
\le Ce^{Cnp} p^2.
$$
Therefore,
$$
\prob{\cup_{n\le \rho t}F_n}\le Cte^{Ctp} p^2\le Cp^{1-cC}\log(1/p), 
$$
which establishes the claim in Step 2.

Next we address the probability that there are two parallel saturated 
lines within distance $5\rho$ for the dynamics restricted to 
$[-2\rho t,2\rho t]^2$, with $0$-boundary, by time $t$. Consider the first such pair of 
lines. Each of these two lines must have, at the saturation time two occupied 
sites, at least one of which is black, within the neighborhood of a site. Therefore, there are 
two transmissive paths, one for each line, that each end at a site on the respective 
line, which has an additional black site within distance $2\rho$.  
Let $G_1$ be the event that the two paths do not merge for this first pair of 
parallel lines.

\medskip
\noindent{\it Step 3\/}. For $c$ small enough, $\prob{G_1}\to 0$.
\medskip

As black sites on all turns of both paths 
are distinct, 
$$\prob{G_1}\le  Ct(tp)^2 (te^{Ctp} p^2)^2=Cp (tp)^5e^{2Ctp}, $$
reflecting the selection of the pair of lines, chosing the position of the 
pair of occupied sites on each line, one of them being black, and then 
the choice of non-merged transmissive paths. The claim in step 3 follows. 


Finally, we consider the event $G_2$ that the two paths do merge
for the described first pair of 
parallel lines.

\medskip
\noindent {\it Step 4\/}.  For $c$ small enough, $\prob{G_2}\to 0$.
\medskip

Fix two sites $y$ and $z$, 
which do not lie on the same line, but are otherwise arbitrary. 
The key is to find an upper bound for the probability 
that there is a neighborhood path that satisfies (2)--(4) of length exactly $n$ connecting $y$ and $z$. 
Consider first 
the case when $n>||y-z||_1$. Then the path makes $i\ge 2$ turns, and 
there are some two successive turns at which the path makes 
a U-turn. If the locations of all other $i-2$ turns are
fixed, there are at most $2$ (symmetric) possibilities for locations of these two turns; 
all other locations change the length. So the upper bound on the probability is
\begin{equation}\label{eq-line-2-1}
\sum_{i=2}^{n} \binom n {i-2}i(Cp)^i\le Cp^2(np)e^{Cnp},
\end{equation}
reflecting the choice of the number of turns, position of the first U-turn, 
and positions of the black sites at each turn. 

Now consider the case $n=||y-z||_1$. Now the number of turns is $i\ge 1$, 
and  we merely observe that after the choice 
of first $i-1$ turns, there is at most one possibility for the last turn, which gives
the upper bound
\begin{equation}\label{eq-line-2-2}
\sum_{i=1}^{n} \binom n {i-1}(Cp)^i\le Cpe^{Cnp},
\end{equation}
as we can now only choose the positions of $i-1$ turns. 

If the first transmissive path makes $i$ turns, the second transmissive path
can join it at $i$ locations. By choosing which path is the second  one (i.e., the one that joins) 
we can also make sure that the initial point of the second path and 
the joining location are not on the same line. 
Putting all this together, we 
sequentially make the choices of: the pair of close parallel saturated lines; 
the pair of occupied sites, with one black, on each line; length of the first 
path; number $i$ of turns of the first path; positions of the $i$ turns on the first 
path; positions of black sites that make the first path transmissive; and locations
where the second path joins. For the probability bound that the second path 
contributes, we use  (\ref{eq-line-2-1}) and  (\ref{eq-line-2-2}). This gives the following 
upper bound:
\begin{align*}
\prob{G_2} & \le Ct (t^2 p^2)\cdot t \sum_{i=0}^{\rho t} \binom {\rho t} i  p^ip^2 i 
 \cdot (tp^2(tp)e^{Ctp}+ pe^{Ctp})
\\&\le Ct (t^2 p^2)\cdot t (pt)e^{Cpt} p^2
\cdot (tp^2(tp)e^{Ctp}+ pe^{Ctp})
\\&= C (tp)^5\cdot e^{2Cpt} 
\cdot ((tp)^{2}+ 1)\cdot p.
\end{align*}
This again goes to 0 if $c$ is small enough. 

From Steps 2--4, for small enough $c$, 
$$
\prob{\mathbf 0\text{ is occupied at time $t$}}\le \prob{(\cup_n F_n)\cup G_1\cup G_2}\to 0, 
$$
as desired.
 \end{proof}
 
 \begin{proof}
 [Proof of Theorem~\ref{thm-liner2}]
 The scaling of $T$ follows from Lemmas~\ref{lemma-line-sym-upper} and~\ref{lemma-line-lower-2}.
 
 We need to prove  the conclusion for the critical length, $L_c$. 
Lemma~\ref{lemma-line-sym-upper} gives the upper bound $L_c = \cO(p^{-1})$. On the other hand, if $n\ll p^{-1}$, the initial 
 configuration in $B_n$ with $0$-boundary is inert with high probability. It follows that $L_c=\Theta(1/p)$. 
  \end{proof}

\section{General upper and lower bounds for the line growth}\label{sec-line-general}

In this section we establish the possibly non-pure critical power for the line growth 
dynamics in which a site becomes newly occupied by having 
either $r$ horizontal or $s$ vertical occupied neighbors, with $2\le s\le r$. We denote 
$$
\gamma=\frac{(r-1)s}{r}, 
$$
the claimed critical power. 

\subsection{Upper bound in the asymmetric case} \label{sec-line-asym-upper}

In this subsection we assume that  $2\le s< r$.

\begin{lemma} \label{lemma-line-asym-upper}
Assume that $n\gg p^{- (\gamma+\epsilon)}$ for some $\epsilon>0$. Then there exists a large enough constant $C>0$  such that
the dynamics spans $B_n$ by time $Cn$. 
\end{lemma}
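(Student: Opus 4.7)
\emph{Proof plan.} We call a collection of $s$ fully occupied horizontal rows of $B_n$ whose vertical positions lie within $\rho$ of one another a \emph{supercritical block}.  Any row within vertical distance $\rho$ of all $s$ rows in such a block has, at each of its sites, at least $s$ vertical occupied neighbors and so fires, while each row grows horizontally via the $r$-threshold.  Consequently, a supercritical block expands at speed $\Theta(\rho)$ in both directions and spans $B_n$ within $O(n)$ further steps; by Lemma~\ref{lemma-ub-easy} it therefore suffices to exhibit such a block in $B_n$, with high probability and in $O(n)$ time, whenever $n\gg p^{-\gamma-\epsilon}$.

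We construct the block in three cascading stages, each completing in $O(n)$ time.  \emph{Stage 1 (columns fill):} say a column of $B_n$ is \emph{$s$-seeded} if it contains $s$ initial black sites in some vertical window of length $2\rho+1$.  The expected number of $s$-seeded columns in $B_n$ is of order $n^2 p^s$, which for $n\gg p^{-\gamma-\epsilon}$ is $\gg 1$.  Each $s$-seeded column fires an initial vertical segment; once this segment has length at least $\rho$, the $s$-threshold rule fires the adjacent sites above and below (which then have at least $\rho\ge s$ vertical occupied neighbors), so the segment grows by one site per time step, and the column is fully occupied by time $O(n)$.  \emph{Stage 2 (rows fill):} the resulting density of fully occupied columns is of order $p^{s/r}$; inspection of any row of $B_n$ shows that the expected number of rows containing a window of length $2\rho+1$ with $\ge r$ occupied sites drawn from the filled columns is $\gg 1$.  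Each such row fires via the $r$-threshold and becomes fully occupied in $O(n)$ further steps.  \emph{Stage 3 (block forms):} the fully occupied rows from Stage 2 have density also of order $p^{s/r}$, and the expected number of $s$-tuples of such rows lying within vertical distance $\rho$ is of order $p^{s(s-r+1)/r}$; this exponent is nonpositive precisely when $s<r$, so with high probability such a tuple exists, yielding the supercritical block.

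Promoting expected counts to high-probability statements requires care with cross-stage dependencies.  We handle this by restricting each stage to a collection of disjoint subregions of $B_n$ in which the relevant events are nearly independent, together with FKG inequalities and a small sprinkling step to convert an $O(1)$-expected event into an almost sure one.  The main technical point is that $\gamma=(r-1)s/r$ is exactly the smallest exponent at which all three stages simultaneously produce $\gg 1$ structures in expectation; the strict inequality $s<r$ is what allows Stage~3 to be non-degenerate, avoiding the logarithmic correction that arises in the symmetric case of Lemma~\ref{lemma-line-sym-upper}.
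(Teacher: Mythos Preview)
Your three-stage cascade breaks down at Stage~2 whenever $\epsilon$ is small. Take $n$ comparable to $p^{-\gamma-\epsilon}$. After Stage~1 the fraction of columns that are fully occupied is of order $np^s=p^{s/r-\epsilon}$, but for a row to fire via the horizontal $r$-threshold using only these columns you need $r$ of them inside a single window of width $2\rho+1$. Since the set of filled columns is the \emph{same} for every row, either every row sees such a window or none does; there is no ``density $p^{s/r}$ of filled rows'' as you assert. The expected number of such column-windows in $B_n$ is
\[
n(np^s)^r \;=\; n^{r+1}p^{rs}\;=\; p^{\,s/r-(r+1)\epsilon},
\]
which tends to $0$ whenever $\epsilon<s/(r(r+1))$. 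Mixing in black sites on a given row does not help, since the term $p^{a}(np^s)^{r-a}$ is maximized at $a=0$ (because $np^{s-1}=p^{(s-r)/r}\gg 1$). Thus for small $\epsilon$ Stage~2 produces nothing with high probability and Stage~3 never starts; your closing claim that all three stages produce $\gg 1$ structures at $n\approx p^{-\gamma}$ is already false at Stage~2.

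The paper overcomes exactly this obstruction by iterating a bounded number $\hat\ell\approx s/(r^2\epsilon)$ of times. It never seeks $r$ nearby filled vertical lines; instead it tracks vertical strips of width $r$ that contain $r-1$ occupied lines (there are $\gep r\epsilon$ of these after one round) and uses each, together with a single extra black site, to saturate a horizontal line. Those horizontal lines then promote vertical strips carrying only $r-2$ occupied lines (there are $\gep s/r+(r-1)\epsilon$ of these) to $r-1$ lines, raising the $(r-1)$-count to $\gep 2r\epsilon$. Each round gains a factor $p^{-r\epsilon}$, and after $\hat\ell$ rounds the probability that a fixed vertical strip is completely occupied is bounded below by a positive constant; the constant $C$ in the lemma absorbs the $O(\hat\ell\,n)$ time these rounds take. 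A fixed three-stage pass cannot substitute for this bootstrapping.
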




We consider the following comparison process, evolving in steps $\ell=1,2\ldots$. Divide the $n\times n$ box into
vertical strips of width $r$ and horizontal strips of width $s$. 
Call a vertical (resp.~horizontal) 
line {\it saturated\/} 
if it contains $s$ (resp.~$r$) occupied sites, all in the same horizontal 
(resp.~vertical) strip. Any step $\ell$ consists of two half-steps.  When $\ell=1$, the first 
half step occupies, in every vertical strip, the maximal contiguous interval of saturated vertical lines that 
includes the leftmost line. In the first half step for $\ell\ge 2$, in each vertical 
strip, consider for occupation only 
the leftmost vertical line that is not already occupied: if that line 
is saturated, make it occupied;
otherwise, that strip 
remains unchanged. In the second half-step, for any $\ell\ge 1$, we only consider the bottom-most 
unoccupied 
line 
in each horizontal strip: if that line is saturated, we occupy that line; otherwise, there are no changes in that strip. 
After the $\ell$th step, let $V^{(\ell)}_m$ be the number 
of vertical strips with at least $m$ occupied lines.  
Also, let $H^{(\ell)}$ be the number of occupied 
horizontal lines after the $\ell$th step.

We stop the process after completion of step $\hat\ell$ where 
$$
\hat\ell:=\min\{\ell:(\ell r+1)\epsilon\ge s/r\}.
$$


\begin{lemma}\label{lemma-asym-upper-ind}
For a small enough $\epsilon>0$ and for $\ell\le \hat\ell$,
\begin{align*}
&V^{(\ell)}_{r-2}\gep \frac{s}{r}+ (r-1)\epsilon\\
&V^{(\ell)}_{r-1}\gep \ell r\epsilon \\
&H^{(\ell)}\gep \ell r\epsilon +\epsilon +s-\frac{s}{r}-1.
\end{align*}
\end{lemma}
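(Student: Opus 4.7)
My plan is to prove the three bounds simultaneously by induction on $\ell$, converting expectation estimates into high-probability bounds via Lemma~\ref{lemma-binom}.

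For the base case $\ell=1$, I would argue that the first half-step uses only the initial i.i.d.\ configuration: in a single vertical strip, the event that its leftmost $m$ vertical lines are each saturated in some horizontal strip has probability of order $\bigl((n/s)p^{s}\bigr)^{m}$, since each such line needs $s$ black sites in some horizontal strip (probability $p^{s}$ per strip, $n/s$ strips) and distinct lines use disjoint sets of sites. Multiplying by the $n/r$ strips gives $\mathbb{E} V^{(1)}_{m}\asymp(n/r)\bigl((n/s)p^{s}\bigr)^{m}$; substituting $n\sim p^{-\gamma-\epsilon}$ with $\gamma=(r-1)s/r$ and concentrating via Lemma~\ref{lemma-binom} yields $V^{(1)}_{r-2}\gep s/r+(r-1)\epsilon$ and $V^{(1)}_{r-1}\gep r\epsilon$. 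For $H^{(1)}$, I would observe that the bottom line of a horizontal strip becomes saturated once some vertical strip in $V^{(1)}_{r-1}$ supplies $r-1$ occupied sites and one additional site on the line is initially black; conditioning on $V^{(1)}_{r-1}$, this has probability of order $V^{(1)}_{r-1}p$ per horizontal strip, giving $\mathbb{E} H^{(1)}\asymp(n/s)V^{(1)}_{r-1}p$, which concentrates to the claim.

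For the inductive step $\ell\to\ell+1$, the bound on $V^{(\ell+1)}_{r-2}$ is immediate from monotonicity of the counts. For $V^{(\ell+1)}_{r-1}$ I would use that the stopping condition $(\ell r+1)\epsilon<s/r$ for $\ell<\hat\ell$ forces $V^{(\ell)}_{r-1}\ll V^{(\ell)}_{r-2}$, so there are $(1-o(1))V^{(\ell)}_{r-2}$ strips with exactly $r-2$ occupied vertical lines. The probability that the $(r-1)$th line of such a strip becomes saturated is at least of order $H^{(\ell)}p^{s-1}$: among the $H^{(\ell)}$ occupied horizontal lines there are at least $H^{(\ell)}/s$ distinct horizontal strips containing at least one occupied line, and each contributes at least $p^{s-1}$ (the remaining $s-1$ sites of the strip on the vertical line must be initially black). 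Combining with the inductive bounds gives $\mathbb{E}\Delta V^{(\ell+1)}_{r-1}\gtrsim V^{(\ell)}_{r-2}H^{(\ell)}p^{s-1}\gep(\ell+1)r\epsilon$, as required. The bound on $H^{(\ell+1)}$ follows similarly: the $V^{(\ell+1)}_{r-1}$ strips with $r-1$ occupied vertical lines each saturate a horizontal line in a given horizontal strip with probability of order $V^{(\ell+1)}_{r-1}p$, so $\mathbb{E}\Delta H^{(\ell+1)}\gtrsim(n/s)V^{(\ell+1)}_{r-1}p$, matching the stated exponent.

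The hard part will be handling dependencies between steps, since the initial black sites used to create saturations at step $\ell$ partially overlap with those needed at later steps. I plan to address this by a sprinkling argument: split the initial density $p$ into $\hat\ell+1$ independent copies of density of order $p$, and reserve one copy for each step. Because $\hat\ell$ is constant (depending only on $r$, $s$, and $\epsilon$, not on $p$), this sprinkling affects only multiplicative constants absorbed in $\asymp$. Alternatively, conditioning on the current set of occupied lines and invoking FKG monotonicity, in the spirit of the proof of Lemma~\ref{lemma-bp-lb-key}, preserves the lower bounds on monotone events in the remaining uncovered sites, and combined with Lemma~\ref{lemma-binom} yields the concentration encoded by $\gep$.
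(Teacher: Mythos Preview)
Your proposal is essentially the same as the paper's proof: induction on $\ell$, with the bound on $V^{(\ell)}_{r-2}$ established once at $\ell=1$ and carried by monotonicity, the sprinkling argument (splitting the density over the bounded number $\hat\ell$ of steps) to decouple the stages, and binomial concentration via Lemma~\ref{lemma-binom}. Your probability calculations for the base case and the inductive step match the paper's, including the use of the stopping rule to ensure the per-strip saturation probabilities stay $o(1)$ so that concentration applies.

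One small correction: your proposed FKG alternative is in the wrong direction. As used in Lemma~\ref{lemma-bp-lb-key}, conditioning on exactly the set $\cL$ of lines being occupied is a \emph{decreasing} event in the configuration off $\cL$, so FKG yields \emph{upper} bounds on increasing events there, not lower bounds. That device is suited to the lower-bound arguments (Sections~\ref{sec-bp-lower} and~\ref{sec-line-asym-lower}), not here. Since your primary approach is sprinkling, which is exactly what the paper uses, this does not affect the validity of your argument; just drop the FKG alternative.
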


\begin{proof}
We prove that the lower power bound on $V^{(\ell)}_{r-2}$ holds at $\ell=1$, and 
the other two bounds are proved by induction on $\ell$.
As we perform only a bounded number of steps, we may (by a standard sprinkling argument) 
assume that the configuration 
of black sites is, at every half step, an independent product measure off 
already occupied lines. 
 For $\ell=1$, 
$V^{(1)}_m$ is with high probability larger than a constant times
$$
n(np^s)^m=n^{m+1}p^{sm}\gg p^{-((\gamma+\epsilon)(m+1)-sm)},
$$
whenever the power of $p$ is non-positive, 
and so
$$
V^{(1)}_m\gep s-\frac{s}{r}+\epsilon+m\left(-\frac{s}{r}+\epsilon\right).
$$
In particular, 
we get the desired two lower power bounds on $V^{(1)}_{r-2}$ and $V^{(1)}_{r-1}$. 
For $H^{(\ell)}$, we will consider help from 
 occupied vertical strips with at least $(r-1)$ occupied lines 
through the first half step of step $\ell$. 
Thus, we need a single black site next to the occupied $(r-1)$ vertical lines within a vertical strip to saturate 
a horizontal line. In particular, $H^{(1)}$ is with high probability at least a constant times
$$
p^{-\epsilon r}np\gg p^{-(\gamma+\epsilon+\epsilon r -1)},
$$
and so
$$
H^{(1)}\gep r\epsilon+\epsilon +s-\frac{s}{r}-1.
$$
For the $\ell\to \ell+1$ inductive step, we need to bound from below the 
probability that a fixed vertical strip that contains $r-2$ occupied vertical lines occupies an additional 
line, with the help of occupied horizontal lines. Using the lower bound on $H^{(\ell)}$ from the induction hypothesis and $s-1$ additional black points in a horizontal strip, we get a lower bound on this probability of at least a constant times 
$$
p^{-(\ell r\epsilon+\epsilon + s-\frac{s}{r}-1)+s-1}.
$$
The stopping rule $\ell\le \hat \ell$ ensures that the power of $p$ above is positive, so our lower bound 
for this probability is small.
This gives 
\begin{align*}
V^{(\ell+1)}_{r-1}\gep &\frac{s}{r}+(r-1)\epsilon
+\ell r\epsilon+\epsilon +s-\frac{s}{r}-1
-(s-1),
\end{align*}
which gives the desired lower power bound for $V^{(\ell+1)}_{r-1}$. We use this bound 
in the subsequent half step to bound $H^{(\ell+1)}$. Observe that the stopping rule $\ell\le \hat\ell$ guarantees
that $$(\ell+1)r\epsilon -1<\frac sr -1+(r-1)\epsilon<0,$$ 
so our lower bound on the probability 
that a fixed horizontal line gets occupied is small. This gives
\begin{align*}
H^{(\ell+1)}\gep & (\ell+1) r\epsilon+\gamma+\epsilon-1, 
\end{align*}
again confirming the induction step.
\end{proof}

\begin{proof}[Proof of Lemma~\ref{lemma-line-asym-upper}]
After our process stops, the probability of occupation of a vertical line in our comparison 
process is 
bounded from below by a strictly positive constant. It takes the original dynamics at most $Cn$ steps to occupy all vertices in the lines occupied by the comparison dynamics through step $\hat \ell$. The original dynamics, then, completely 
occupies a vertical strip in at most $n$ additional steps with high probability. 
Then the entire 
box $B_n$ is occupied in at most  $n$ additional steps. 
\end{proof}

\subsection{Lower bound in the asymmetric case}\label{sec-line-asym-lower}

In this subsection we again assume that $2\le s< r$ and $\gamma = (r-1)s/r$.

\begin{lemma} \label{lemma-line-asym-lower}
Assume that $p^{-\gamma+\delta}\ll n\ll p^{-\gamma}$. For small enough $\delta>0$,
the dynamics on $B_n$ with periodic boundary has $|\xi_\infty|=o(n^2)$ with probability 
converging to $1$.
\end{lemma}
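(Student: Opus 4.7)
The plan is to adapt the comparison-dynamics strategy of Section~\ref{sec-bp-lower} to the line-growth setting. I would introduce an auxiliary growth in which a vertical line becomes \emph{saturated} at step $\ell$ if some length-$(2\rho+1)$ subinterval contains at least $s$ points that are either initially black or lie on a horizontal line saturated by step $\ell-1$, and symmetrically for horizontal lines with threshold $r$. Once saturated, treat the entire line as occupied. This produces an upper bound on $\xi_\infty$, since any point in $\xi_\infty\setminus\xi_0$ must lie on an eventually saturated line. Let $V_m^{(\ell)}$ and $H_m^{(\ell)}$ denote the numbers of $m$-clumps of saturated vertical (resp.~horizontal) lines after $\ell$ rounds, using the clump definition of Section~\ref{sec-bp-lower}.

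The key inductive claim, in the spirit of Lemma~\ref{lemma-bp-lb-key}, is that for some $C=C(r,s)$ and every $\ell$ up to a stopping time $\hat{\ell}=\Theta(1/\delta)$:
\begin{align*}
V_m^{(\ell)} &\lep \gamma - \tfrac{ms}{r} + C\ell\delta \quad (1\le m\le r-1),\\
V_r^{(\ell)} &= 0 \text{ with probability }\to 1,\\
H_1^{(\ell)} &\lep 2\gamma - r + C\ell\delta,
\end{align*}
with analogous bounds on $H_m^{(\ell)}$ for $m\ge 2$. The base case follows from direct first-moment estimates on the initial Bernoulli configuration; in particular $\expe{V_r^{(1)}}\sim n^{r+1}p^{rs}$ has positive exponent since $n\ll p^{-\gamma}$, so $V_r^{(1)}=0$ in probability. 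For the induction step, a new $m$-clump of saturated vertical lines at step $\ell+1$ arises as an extension of an $a$-clump by $m-a$ new line saturations, each possibly aided by a $b_i$-clump of saturated horizontal lines contributing intersections to its subinterval. Unlike in bootstrap percolation, parallel clumps do not contribute to the saturation threshold, so each new vertical line independently requires $s$ occupied points in its subinterval. Conditioning on the saturation structure and using FKG to keep the outside Bernoulli field independent, summing over $a$ and $b_i$ and optimizing recovers the bound at step $\ell+1$; the bound on $H_m^{(\ell+1)}$ is handled symmetrically and crucially uses the constraint $a\le r-1$ imposed by the invariant $V_r^{(\ell)}=0$.

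Once the induction is completed and $\hat{\ell}$ is chosen so that $\Delta V_1^{(\hat{\ell}+1)}\to 0$ and the accumulated error $C\hat{\ell}\delta$ remains strictly below $s/r-\delta$, the comparison dynamics effectively halts with $V_1^{(\hat{\ell})} + H_1^{(\hat{\ell})} \lep \gamma - c$ for some $c=c(\delta)>0$. Then
$$
|\xi_\infty| \le |\xi_0| + n\bigl(V_1^{(\hat{\ell})}+H_1^{(\hat{\ell})}\bigr) = o(n^2)
$$
with probability converging to $1$, which yields the desired conclusion.

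The main obstacle, compared to bootstrap percolation, is that the power bounds for $V_{r-1}^{(\ell)}$ and $H_1^{(\ell)}$ do not strictly decrease with $\ell$: each step accumulates an additive $O(\delta)$ error in the exponent, in contrast to the cleanly decreasing $\beta(m)$ of Lemma~\ref{lemma-bp-lb-dec}. The choice $\hat{\ell}=\Theta(1/\delta)$ is a tight balancing act---large enough that the dynamics has effectively equilibrated, but small enough that $C\hat{\ell}\delta$ stays strictly less than the available room $s/r-\delta$. In addition, the mixed contributions (vertical saturation from horizontal help and vice versa) interact nonlinearly, and verifying that the invariant $V_r^{(\ell)}=0$ persists throughout the recursion requires a careful case analysis analogous to, but strictly more intricate than, the one in Lemma~\ref{lemma-bp-lb-key}.
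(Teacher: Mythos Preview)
Your framework---comparison dynamics that saturate entire lines, clump-counting variables $V_m^{(\ell)}$ and $H_m^{(\ell)}$, FKG to decouple the outside configuration, and induction on rounds---matches the paper's approach. However, two specific points diverge from what actually makes the argument close.

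First, the ``main obstacle'' you describe does not exist in the asymmetric case $s<r$. The paper's key lemma (Lemma~\ref{lemma-asym-lower-ind}) establishes $\ell$-\emph{independent} power bounds on the cumulative counts $V_m^{(\ell)}$ and $H_m^{(\ell)}$, together with increment bounds
\[
\Delta V_m^{(\ell)},\ \Delta H_m^{(\ell)}\lep\ (\text{constant in }\ell)\ +\ (\ell-1)\Bigl(\tfrac{s}{r}-1\Bigr),
\]
which \emph{decrease} by the fixed positive amount $1-s/r$ at every step. There is no $O(\delta)$ error accumulation, and the comparison dynamics halts after a number of rounds depending only on $r,s$, not on $\delta$. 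Your ``tight balancing act'' with $\hat\ell=\Theta(1/\delta)$ is the mechanism of the \emph{symmetric} case (Lemma~\ref{lemma-line-sym-lower}), where the intrinsic gap $1-s/r$ vanishes and one must borrow $\epsilon$ from the hypothesis $n\ll p^{-(r-1-\epsilon)}$; transplanting that mechanism here is unnecessary, and your sketch as written is internally inconsistent (the cumulative bound $\gamma-ms/r+C\ell\delta$ \emph{grows} with $\ell$, so it cannot by itself force $\Delta V_1^{(\hat\ell+1)}\to 0$).

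Second, the invariant should be $V_{r-1}^{(\ell)}=0$, not $V_r^{(\ell)}=0$. Since $\beta(r-1)=(r-1)(\gamma-s)+\gamma=0$, already at step~1 there are no vertical $(r-1)$-clumps. This is what the paper uses: in bounding $H_m^{(\ell)}$, each horizontal line can borrow at most $a_i\le r-2$ intersections from vertical clumps, and because the coefficient of $a=\sum a_i$ in the optimization is $\gamma-s+1=1-s/r>0$, the maximum is attained at $a_i=r-2$. Allowing $a_i=r-1$ yields a strictly weaker bound (namely $H_m^{(\ell)}\lep \gamma-m$ instead of $\gamma+m(s/r-2)$), and you would need to check separately that this looser bound still closes the induction and keeps $V_{r-1}^{(\ell)}=0$ for all $\ell$.
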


We now call a vertical (resp.~horizontal) 
line {\it saturated\/} if it contains $s$ (resp.~$r$) occupied points within a neighborhood. We now, at
every step $\ell$, perform the following two half steps.
In the first half-step, 
we occupy all saturated vertical lines and in the second half-step, all horizontal saturated 
lines. Recall the definition of an $m$-clump from Section~\ref{sec-bp-lower}. After the $\ell$th step is completed, we let $V^{(\ell)}_m$ and $H^{(\ell)}_m$ be the numbers 
of vertical and horizontal clumps 
of size at least $m$, with $V^{(0)}_m=H^{(0)}_m=0$, for $m>0$ and 
$V^{(0)}_0=H^{(0)}_0=n$. 
Also let $\Delta V^{(\ell)}_m=V^{(\ell)}_m-V^{(\ell-1)}_m$ 
and  $\Delta H^{(\ell)}_m=H^{(\ell)}_m-H^{(\ell-1)}_m$.
 
\begin{lemma}\label{lemma-asym-lower-ind}
Assume that $n\ll p^{-\gamma}$.
For all $\ell\ge 1$,  
\begin{align*}
&V^{(\ell)}_m\lep m\left(-\frac sr\right)+s-\frac sr\\
&H^{(\ell)}_m\lep m\left(\frac sr-2\right)+s-\frac sr\\
&\Delta V^{(\ell)}_m\lep m\left(-\frac sr\right)+s-\frac sr+(\ell-1)\left(\frac sr-1\right)\\
&\Delta H^{(\ell)}_m\lep m\left(\frac sr-2\right)+s-\frac sr+(\ell-1)\left(\frac sr-1\right).
\end{align*}
Moreover, $V^{(\ell)}_{r-1}=0$ with high probability. 
\end{lemma}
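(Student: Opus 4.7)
The plan is to prove all five bounds by simultaneous induction on $\ell$, treating the increment bounds $\Delta V_m^{(\ell)}$ and $\Delta H_m^{(\ell)}$ as the primary objects and deducing the bounds on $V_m^{(\ell)}$ and $H_m^{(\ell)}$ by summing a bounded number of increments (each tighter than its predecessor in the $\lep$ sense, since $s/r - 1 < 0$). For $m = r-1$ the bound for $V_m^{(\ell)}$ reads $V_{r-1}^{(\ell)} \lep 0$, which for an integer-valued random variable forces $V_{r-1}^{(\ell)} = 0$ with high probability. The base case $\ell = 1$ for $V_m^{(1)}$ is a direct expectation computation bounded by $Cn(np^s)^m$, followed by Markov's inequality; for $H_m^{(1)}$, whose only available help is from $V^{(1)}$-clumps, I will condition on the set of occupied vertical lines and apply the FKG inequality (exactly as in Section~\ref{sec-bp-lower}) to handle residual increasing events, then enumerate horizontal clump formations via an $a$-clump helper plus $r - a$ fresh sites per new horizontal line, optimizing over $a \in [0, r-2]$.

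The inductive step rests on a key structural observation: a vertical line newly saturated at the first half of step $\ell + 1$ must receive help from at least one horizontal clump in $\Delta H^{(\ell)}$, because parallel vertical lines contribute nothing to each other's saturation window, so if every helper were already available at the end of step $\ell - 1$, the line would have been saturated then. The analogous statement holds for new horizontal lines using $\Delta V^{(\ell+1)}$. This, together with FKG conditioning on previously occupied lines, yields the bound
\begin{equation*}
\Delta V_m^{(\ell+1)} \;\le\; C \sum_{a=0}^{m-1}\; \sum_{b_1,\dots,b_{m-a} \ge 1} V_a^{(\ell)} \prod_{i=1}^{m-a} \Delta H_{b_i}^{(\ell)}\, p^{s-b_i},
\end{equation*}
into which I will substitute the induction hypotheses and optimize the resulting exponent of $p$ jointly over $(a, b_i)$. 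The algebraic identity that makes the optimization close up is that the base contribution $as/r + (m-a)s - (m-a+1)\gamma$ is constant in $a$, directly from $\gamma = s(r-1)/r$, so the maximum of the summand over admissible $(a, b_i)$ is attained at $a = m-1$, $b_i = 1$ and exactly reproduces the target exponent $ms/r - \gamma + \ell(1 - s/r)$. The argument for $\Delta H_m^{(\ell+1)}$ is analogous: new horizontal lines require a helper from $\Delta V^{(\ell+1)}$, with helper sizes cut off at $r-2$ by the running hypothesis $V_{r-1}^{(\cdot)} = 0$; the linear coefficient in $b_i$ has the opposite sign, so the extremum now sits at $a = m-1$, $b_i = r-2$, and again reproduces the target.

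The main obstacles will be geometric and organizational rather than conceptual. First, clumps are permitted to have diameter $3\rho$ while a saturation window has width only $2\rho + 1$, so the heuristic that a $b$-clump of perpendicular lines supplies all $b$ occupied sites inside a single window requires care; I will handle this either by tightening the clump definition to diameter at most $\rho$ throughout the lower-bound argument, or by absorbing a constant-factor overcount into $C$. Second, the simultaneous induction must close consistently: the constraint $b_i \le r-2$ has to be compatible with the induction at every prior step (so that $V_{r-1}^{(\ell)} = 0$ is available when invoked), the FKG conditioning must leave the fresh black sites used for saturation independent of the already-committed occupied lines, and the polynomially many admissible tuples $(a, (b_i))$ must be safely subsumed into the constant $C$ so that the worst summand genuinely dominates.
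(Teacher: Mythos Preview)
Your proposal is correct and matches the paper's argument in essentially every detail: induction on $\ell$, FKG conditioning on the set of already occupied lines, the key observation that each newly saturated line must use at least one helper from the most recent increment (forcing $b_i\ge 1$ and allowing the $\Delta$-bound for every helper), and the same optimizers ($a=m-1$, $b_i=1$ for $\Delta V$; $a=m-1$, $b_i=r-2$ for $\Delta H$). Your geometric worry is a non-issue in the direction that matters: any set of $b$ perpendicular occupied lines meeting a single $(2\rho+1)$-window has diameter at most $2\rho<3\rho$ and is therefore counted as a $b$-clump, so no tightening is needed (and tightening the clump diameter to $\rho$ would actually break this containment).
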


\begin{proof} We proceed by induction. 
Observe that, as $s/r<1$, the upper power bounds for $\Delta V^{(\ell)}_m$ and 
$\Delta H^{(\ell)}_m$ are less than or equal to  the respective bounds for $V^{(\ell)}_m$
and $H^{(\ell)}_m$.
For $\ell=1$, the number of vertical $m$-clumps 
is bounded by a constant times 
$$
n(np^s)^m\ll p^{-\gamma(m+1)+sm}
$$
and therefore 
$$
V^{(1)}_m\lep m(\gamma-s)+\gamma=:\beta(m).
$$
To bound the number of horizontal $m$-clumps, we employ an  
optimization scheme described below.  

Similarly as in Section~\ref{sec-bp-lower}, 
we use FKG in the following fashion. Given a fixed (deterministic) set $\mathcal H$ 
of, say,  horizontal lines, condition
on exactly them being occupied by the procedure so far. By FKG, under this conditioning, 
the probability of any increasing event $E$ depending on the configuration 
outside $\mathcal H$ is smaller than the non-conditional probability of $E$.

Any horizontal $m$-clump consists of $m$ lines enumerated, say, 
from the lowest line in the clump. The line $i$ may be saturated by some vertical 
$a_i$-clump together with $r-a_i$ nearby black points on this line; here, 
$0\le a_i\le r-2$, since $V_{r-1}^{(1)}=0$ with high probability. 
Thus the number of horizontal $m$-clumps with helping numbers $a_1,\ldots, a_m$ is with high probability bounded 
above by a constant times 
$$
n\cdot p^{-\beta(a_1)}p^{r-a_1}\cdots p^{-\beta(a_m)}p^{r-a_m}.
$$
Let $a=a_1+\cdots +a_m$. Then we get 
\begin{align*}
H^{(1)}_m\lep\max_a(\gamma-mr+\gamma m+(\gamma-s+1)a).
\end{align*}
As $\gamma-s+1>0$, the maximum is achieved at $a=m(r-2)$ (that is, all $a_i=r-2$). 
It follows that 
\begin{align*}
H^{(1)}_m\lep\gamma-mr+\gamma m+(\gamma-s+1)m(r-2)=m\left(\frac sr-2\right)+\gamma=:\alpha(m).
\end{align*}
For the $\ell\to\ell+1$ induction step, let $\beta'(m)$ and $\alpha'(m)$ be the right-hand side of the 
upper power bounds for $\Delta V_m^{(\ell)}$ and $\Delta H_m^{(\ell)}$. 

To get a new vertical $m$-clump at step $\ell+1$, 
pick a $b$ and add $m-b$ vertical lines to an existing vertical 
$b$-clump, $0\le b\le m-1$. Let $a_1,\ldots,a_{m-b}$ be the sizes of helping horizontal clumps 
for lines to be saturated $1,\ldots, m-b$, counted from the left. The number of such new $m$-clumps is 
with high probability at most constant times
$$
p^{-\beta(b)}\cdot p^{-\alpha'(a_1)}p^{s-a_1}\cdots p^{-\alpha'({a_{m-b}})}p^{s-a_{m-b}}.
$$
Let $a=\sum_i a_i$. Observe that $a_i\ge 1$ for all $i$ since $\ell\ge 1$, so $a\ge m-b$. We get
\begin{align*}
\Delta V^{(\ell+1)}_m&\lep \max_{a,b}\:
b(\gamma-s)+\gamma-s(m-b) +a + a\left(\frac sr-2\right)
+(m-b)\left(\gamma+(\ell-1)\left(\frac sr-1\right)\right)\\
&=\max_{a,b}\: a\left(\frac sr-1 \right)+(m-b)(\ell-1)\left(\frac sr-1\right)-m\frac sr+\gamma\\
&=\max_{b} \: (m-b)\left(\frac sr-1 \right)+(m-b)(\ell-1)\left(\frac sr-1\right)-m\frac sr+\gamma\\
&=\max_{b} \: (m-b)\ell\left(\frac sr-1\right)-m\frac sr+\gamma\\
&= \ell\left(\frac sr-1\right)-m\frac sr+\gamma,\\
\end{align*}
as claimed. 

Proceeding to formation of horizontal clumps, we similarly get, with $0\le b\le m-1$ and 
$1\le a_i\le r-2$, $i=1,\ldots, m-b$, $a=\sum_i a_i$, and noting that we must use 
$\beta'({a_i})$ with $\ell$ replaced by $\ell+1$,
\begin{align*}
\Delta H^{(\ell+1)}_m&\lep 
\max_{a_i,b}\:\alpha(b)+\sum_{i}\beta'(a_i)+\sum_i a_i-r(m-b)\\
&=\max_{a,b}\: a\left(1-\frac sr \right)
+(m-b)\left(\gamma+\ell\left(\frac sr-1\right)-r-\frac sr+2\right)+m\left(\frac sr-2\right)+\gamma\\
&=\max_{b}\: (m-b)(r-2)\left(1-\frac sr \right)
+(m-b)\left(\gamma+\ell\left(\frac sr-1\right)-r-\frac sr+2\right)+m\left(\frac sr-2\right)+\gamma\\
&=\max_{b}\: (m-b)\ell\left(\frac sr-1\right)+m\left(\frac sr-2\right)+\gamma\\
&=\ell\left(\frac sr-1\right)+m\left(\frac sr-2\right)+\gamma,\\
\end{align*}
again verifying the inductive claim. 
\end{proof}

\begin{proof}[Proof of Lemma~\ref{lemma-line-asym-lower}] 
It follows from Lemma~\ref{lemma-asym-lower-ind} that, after a bounded number $\ell$ of steps, 
$\Delta V_1^{(\ell)}\lep 0$ 
and $\Delta H_1^{(\ell)}\lep 0$. Therefore the configuration after $\ell$ steps is with high probability inert. As
$V_1^{(\ell)}\lep\gamma-s/r<\gamma$, for small enough $\delta>0$, the final configuration occupies $o(n^2)$ sites, and Lemma~\ref{lemma-lb-easy}
applies.
\end{proof}



\subsection{The general lower bound for the symmetric case.}\label{sec-line-sym-lower}
In this section, we assume $s=r\ge 2$, so that $\gamma=r-1$. 

\begin{lemma} \label{lemma-line-sym-lower}
Fix an $\epsilon\in(0,1/2)$. 
Assume that $ p^{- (r-3/2-\epsilon)} \ll n\ll p^{- (r-1-\epsilon)}$. Then 
the dynamics on $B_n$ with periodic boundary has $|\xi_\infty|=o(n^2)$ with probability 
converging to $1$.
\end{lemma}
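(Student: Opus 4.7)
The plan is to adapt the proof of Lemma~\ref{lemma-line-asym-lower} to the symmetric case $s=r$, $\gamma=r-1$. The asymmetric argument's per-step decay came from the factor $s/r-1<0$ appearing in the $\Delta$-bound recursion; in the symmetric case this coefficient vanishes, so the direct analog of the asymmetric induction is stagnant in $\ell$. The hypothesis $n\ll p^{-(r-1-\epsilon)}$ reintroduces a strictly positive slack of size $\mu:=\epsilon$ per step, and plays the role that $1-s/r$ played in the asymmetric case.

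I would set up the same comparison process (at each step, first saturate all vertical, then all horizontal saturated lines) and the same counts $V^{(\ell)}_m$, $H^{(\ell)}_m$, $\Delta V^{(\ell)}_m$, $\Delta H^{(\ell)}_m$. The inductive claim I would prove, for all $\ell\ge 1$, is
\begin{align*}
V^{(\ell)}_m,\ H^{(\ell)}_m &\lep (r-1-m)-(m+1)\mu,\\
\Delta V^{(\ell)}_m,\ \Delta H^{(\ell)}_m &\lep (r-1-m)-(m+\ell)\mu.
\end{align*}
The base case $\ell=1$ is a direct expectation/FKG computation: with $n=p^{-(r-1-\mu)}$, $V^{(1)}_m\le C n^{m+1}p^{rm}$ has exponent $(m+1)(r-1-\mu)-rm=(r-1-m)-(m+1)\mu$, and the bound $V^{(1)}_{r-1}\lep -r\mu$ forces all helper sizes to lie in $[0,r-2]$. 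The inductive step mimics the maximization in Lemma~\ref{lemma-asym-lower-ind}, but now the coefficient of the total helper size $a$ is $-\mu<0$ (instead of the vanishing $s/r-1$); the maximum is therefore attained at $a=m-b$ (all $a_i=1$) and $b=m-1$, producing the extra $\mu$ of slack per step. The constraint $a_i\ge 1$ is the same ``at least one new helper'' constraint used in the asymmetric proof, which justifies using $\Delta H^{(\ell)}$ rather than $H^{(\ell)}$ in the recursion for $\Delta V^{(\ell+1)}$: it is precisely this constraint that accumulates the $\mu$-improvement from one step to the next. The cumulative counts $V^{(\ell)}_m,H^{(\ell)}_m$ remain dominated by their step-$1$ values because $\sum_{\ell'\ge 2}\Delta V^{(\ell')}_m$ is of smaller order.

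Choosing $\ell^{*}=\lceil r/\mu\rceil$, the inductive bounds give $\Delta V^{(\ell^{*})}_1,\ \Delta H^{(\ell^{*})}_1\lep$ a strictly negative quantity, hence both tend to $0$ in probability; as the counts are integer-valued, with high probability they vanish at step $\ell^{*}$, and the comparison process is stationary thereafter (the union bound over the $O(1/\epsilon)$ values of $\ell\le \ell^{*}$, which is a constant independent of $p$, preserves high probability). At stabilization $V^{(\infty)}_1+H^{(\infty)}_1\lep (r-2)-2\mu$, so the hypothesis $n\gg p^{-(r-3/2-\epsilon)}$ together with $\epsilon<1/2$ yields
$$
|\xi_\infty|\ \le\ n\bigl(V^{(\infty)}_1+H^{(\infty)}_1\bigr)+|\xi_0|\ \le\ n\cdot p^{-(r-2-2\epsilon)+o(1)}+p\,n^2\ =\ o(n^2),
$$
and Lemma~\ref{lemma-lb-easy} concludes the proof.

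The main obstacle I anticipate is the careful tracking of the $\mu$-slack through the induction. Because the zeroth-order $\ell$-dependent decay collapses at $s=r$, only the first-order $\mu$-correction distinguishes sub- from super-critical behavior, so every maximization in the inductive step must be carried out to order $\mu$ with full precision. A secondary technical point is verifying that the ``at least one new helper'' requirement propagates correctly from step to step through the use of $\Delta H$ (rather than $H$) in the recursion for $\Delta V$, and confirming algebraically that the lower bound $n\gg p^{-(r-3/2-\epsilon)}$, together with $\epsilon<1/2$, is precisely tight enough to turn the terminal bound $V^{(\infty)}_1+H^{(\infty)}_1\lep(r-2)-2\mu$ into $|\xi_\infty|=o(n^2)$ while keeping the $p\,n^2$ contribution also negligible.
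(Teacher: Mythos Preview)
Your proposal is correct and follows essentially the same approach as the paper. Your inductive bounds $(r-1-m)-(m+1)\mu$ and $(r-1-m)-(m+\ell)\mu$ are exactly the paper's $\alpha(m)=m(\gamma'-r)+\gamma'$ and $\alpha'(m)=\alpha(m)-(\ell-1)\epsilon$ with $\gamma'=r-1-\epsilon$, and your maximization (coefficient of $a$ equals $-\mu$, optimum at all $a_i=1$, $b=m-1$) is identical to the paper's computation; the only cosmetic difference is that the paper updates horizontal and vertical lines simultaneously rather than in two half-steps, which by symmetry is immaterial.
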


\begin{proof} 
Let $\gamma'=r-1-\epsilon$. 
We now at each step occupy all
$r$-saturated horizontal and vertical lines simultaneously. 
Then we have, as in the asymmetric case,  
$$
H_m^{(1)}, V_m^{(1)}\lep  m(\gamma'-r)+\gamma'=:\alpha(m)
$$
so that both numbers vanish with high probability for $m\ge r-1$. Now we claim that 
$$
\Delta H_m^{(\ell)}, \Delta V_m^{(\ell)}\lep  m(\gamma'-r)+\gamma'-(\ell-1)\epsilon=:\alpha'(m)
$$
for all $\ell\ge 1$. 

To prove the claim by induction, we again consider ways to get 
a new vertical $m$-clump at step $\ell+1$. We 
pick a $b$ and add $m-b$ vertical lines to an existing vertical 
$b$-clump, $0\le b\le m-1$, and let $a_1,\ldots,a_{m-b}$ be the helping horizontal clumps 
for line $1,\ldots, m-b$, as before. The number of such new $m$-clumps is 
with high probability at most constant times
$$
p^{-\alpha(b)}\cdot p^{-\alpha'({a_1})}p^{r-a_1}\cdots p^{-\alpha'(a_{m-b})}p^{r-a_{m-b}}.
$$
Let $a=\sum_i a_i$. Observe that $1\le a_i\le r-2$ for all $i$. We get
\begin{align*}
\Delta V^{(\ell+1)}_m&\lep \max_{a,b}\:
b(\gamma'-r)+\gamma'-r(m-b) +a + a\left(\gamma'-r\right)
+(m-b)\left(\gamma'-(\ell-1)\epsilon\right)\\
&=\max_{a,b}\: -a\epsilon-(m-b)(\ell-1)\epsilon+ m(\gamma'-r)+\gamma'\\
&=\max_{b}\: -(m-b)\epsilon-(m-b)(\ell-1)\epsilon+ m(\gamma'-r)+\gamma'\\
&=\max_{b}\: -(m-b)\ell\epsilon+ m(\gamma'-r)+\gamma'\\
&= -\ell\epsilon+ m(\gamma'-r)+\gamma',
\end{align*}
as claimed. The proof is concluded as that of Lemma~\ref{lemma-line-asym-lower}.
\end{proof}

\begin{proof}
[Proof of Theorem~\ref{thm-line}]
The conclusion follows from Lemmas~\ref{lemma-line-asym-upper} and~\ref{lemma-line-asym-lower}
for the case $s<r$ and from Lemmas~\ref{lemma-line-sym-upper}, and~\ref{lemma-line-sym-lower}
for the case $s=r$.
\end{proof}

\subsection{The perturbed line growth.}\label{sec-line-perturbed}

\begin{prop} \label{prop-line-perturbed} Assume $2\le s\le r$, and 
assume that we remove the $(r-1,s-1)$ square from the line-growth 
Young diagram of Theorem~\ref{thm-line} 
to get the zero-set with minimal counts $(0,s)$, $(r,0)$ and 
$(r-1,s-1)$.  Then $\gamma_c=(r-1)s/r$ remains the same as in Theorem~\ref{thm-line}. 
\end{prop}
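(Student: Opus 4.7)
The plan has two parts corresponding to the two inequalities defining $\gamma_c$.

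\textbf{Upper bound.} The zero-set $\cZ'$ in the statement is a strict subset of the line-growth zero-set $\cZ=[0,r-1]\times[0,s-1]$ from Theorem~\ref{thm-line}, since we have merely removed the corner $(r-1,s-1)$. By the firing rule~(\ref{xi-rule}), a smaller zero-set produces a pointwise faster dynamics, so starting from any common $\xi_0$, $\xi_t^{\cZ'}\supseteq\xi_t^\cZ$ for every $t$, and hence $T_{\cZ'}\le T_\cZ$ almost surely. Theorem~\ref{thm-line} then immediately gives $T_{\cZ'}\lesssim p^{-(\gamma+\epsilon)}$ for every $\epsilon>0$.

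\textbf{Lower bound.} The real content is showing $T_{\cZ'}\gtrsim p^{-(\gamma-\epsilon)}$. I would enhance the comparison dynamics of Section~\ref{sec-line-general}. As there, work on $B_n$ with periodic boundary and, at every step, saturate every vertical (resp.\ horizontal) line carrying $s$ (resp.\ $r$) occupied points in a single neighborhood. In addition, at every step occupy every site $x$ satisfying the new mixed firing condition $|\cN_x^h\cap\xi|\ge r-1$ and $|\cN_x^v\cap\xi|\ge s-1$. Keep the clump counts $V_m^{(\ell)}, H_m^{(\ell)}, \Delta V_m^{(\ell)}, \Delta H_m^{(\ell)}$ from Lemma~\ref{lemma-asym-lower-ind} (or Lemma~\ref{lemma-line-sym-lower} if $s=r$), and track separately a count $M^{(\ell)}$ of newly occupied mixed-rule sites in step $\ell$. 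Then rerun the inductive computations, expanding each optimization over helper configurations to allow three kinds of inputs at a saturating line: (i) initial black sites, (ii) sites on crossing clumps, and (iii) previously produced mixed-rule sites. The bound on $M^{(\ell+1)}$ is obtained by parameterizing the sizes of vertical clumps meeting $\cN_x^h$ and horizontal clumps meeting $\cN_x^v$, together with black-site fillers needed to top up to $r-1$ and $s-1$, and by invoking FKG as in the original proofs.

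\textbf{Main obstacle.} The crux is verifying that the new mixed-rule term never beats the existing maximizers. The useful heuristic is that a mixed firing requires $r+s-2\ge r$ occupied neighbors (using $s\ge 2$), so its probability costs at least $p^r$ even from black sites alone, and each such site is isolated and so can substitute for at most one black site in any subsequent line-saturation tally. Consequently, inserting the new option into the linear combinations appearing in the proofs of Lemma~\ref{lemma-asym-lower-ind} and Lemma~\ref{lemma-line-sym-lower} should not raise the optimum, and the same asymptotic bounds on $V_m^{(\ell)}, H_m^{(\ell)}, \Delta V_m^{(\ell)}, \Delta H_m^{(\ell)}$ persist. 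Once the induction stabilizes after a bounded number of steps, Lemma~\ref{lemma-lb-easy} delivers the matching lower bound and hence $\gamma_c=(r-1)s/r$.
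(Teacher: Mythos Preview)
Your upper bound is fine and matches the paper. Your lower-bound strategy, however, is considerably more laborious than what is actually needed, and the paper takes a much shorter route.

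You propose to rerun the entire induction of Lemmas~\ref{lemma-asym-lower-ind} and~\ref{lemma-line-sym-lower} with an extra variable $M^{(\ell)}$ tracking mixed-rule occupations, and to re-optimize at every step allowing this new ingredient. The paper instead leaves the comparison dynamics \emph{completely unchanged}: it runs the line-growth comparison to its inert final state $\xi^*$, and then checks, as a one-shot post-hoc computation, that $\xi^*$ is \emph{also} inert for the perturbed rule. Since $\xi_0\subseteq\xi^*$ and $\xi^*$ is inert for the perturbed rule, monotonicity gives $\xi_\infty^{\cZ'}\subseteq\xi^*$, and the density bound from the line-growth argument carries over directly. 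The post-hoc check is exactly your heuristic made precise: a site firing under the mixed minimal count needs $r+s-2$ occupied neighbors in total, and one shows that the number $N$ of sites in $\xi^*$ with at least $k$ occupied neighbors satisfies $N\lep r+s-2-k$ (asymmetric case) or $N\lep 2\gamma'-k$ (symmetric case), so $N=0$ with high probability when $k=r+s-2$. No induction needs to be reopened.

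Your plan is not wrong in principle, but two points are worth flagging. First, your claim that ``each such site is isolated'' is unjustified as stated; mixed-rule sites created near clumps need not be isolated, and you would need an argument to control clustering. Second, if you actually carried out the bound on $M^{(\ell)}$, you would be led to the same optimization over helping horizontal and vertical clumps that the paper performs for $N$, and you would discover $M^{(\ell)}\lep 0$ --- i.e., there are no mixed-rule sites at all with high probability. At that point your modified induction collapses back to the original one. The paper simply recognizes this from the outset and skips the detour.
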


\begin{proof}  We will address the asymmetric case $s<r$ first, proving that 
the lower bound in Lemma~\ref{lemma-line-asym-lower} still 
holds.
We join the proof of Lemma~\ref{lemma-asym-lower-ind}, with the corresponding definitions of $\alpha$ and $\beta$, and 
claim that the final configuration is still with 
high probability inert, which clearly suffices to verify this claim. 

Fix $k\ge 1$ and let $N$ be the number of 
sites in the final configuration in the proof of Lemma~\ref{lemma-asym-lower-ind} that have
at least $k$ occupied neighbors. The neighborhood of
such a site must intersect some $a$ occupied horizontal lines and some $b$ occupied vertical lines
and contain additional $k-a-b$ black sites. Therefore, $N$ is with high probability 
much smaller than
$$
p^{-\alpha(a)} p^{-\beta(b)} p^{k-a-b},
$$
and then
\begin{align*}
N&\lep \max_{0\le a, 0\le b\le r-2}a\left(\frac sr-1\right)
+b\left(1-\frac sr\right)+2\gamma-k
\\&=
(r-2)\left(1-\frac sr\right)+2\gamma-k
\\&=r+s-2-k,
\end{align*} 
and so $N=0$ with high probability as soon as $k\ge r-s-2$, which 
covers exactly our case.

To address the symmetric case, we prove that the lower bound in Lemma~\ref{lemma-line-sym-lower} still 
holds, again joining its proof, with the corresponding definition of $\alpha$. 
We observe that the procedure in the proof produces the final configuration in which there 
are no $(r-1)$-clumps. We claim that this configuration is still with high probability 
inert. To prove this claim, let $k\ge 1$ and let $N$ be the number of 
sites in this configuration with $k$ occupied neighbors. The neighborhood of
such a site must intersect some $a$ occupied horizontal lines and some $b$ vertical lines
and contain additional $k-a-b$ black sites. Therefore, $N$ is with high probability 
much smaller than
$$
p^{-\alpha(a)} p^{-\alpha(b)} p^{k-a-b},
$$
and then
\begin{align*}
N\lep\max_{0\le a,b \le r-2}\left[-\epsilon(a+b)+2\gamma'-{k}\right]=2\gamma'-k, 
\end{align*} 
and so $N=0$ with high probability as soon as $k\ge 2\gamma'$, which 
holds when $k=2(r-1)$.
\end{proof}

\section{Finite L-shaped Young diagrams} \label{sec-L-finite}

It follows from the results of the Theorems~\ref{thm-bp} and~\ref{thm-line} that any zero-set that
lies between the threshold-$r$ bootstrap percolation triangle and the $r\times r$ box has 
the the upper and lower powers that are both $r-\mathcal O(\sqrt r)$, for large $r$. 
In this section, we consider
some examples of zero-sets that do not satisfy this restriction.
Our main focus will be symmetric L-shaped Young diagrams, 
but we will also briefly address the asymmetric ones. We begin with a lemma that illustrates our methods for the simplest $L$-shaped Young diagrams, and provides a lower bound used in Lemma~\ref{lemma-L4}.

\begin{lemma} \label{lemma-L1} Assume $\cZ$ has minimal counts $(0,r)$, $(r,0)$ and 
$(1,1)$ for $r\ge 2$. Then $\gamma_c=r/2$.
\end{lemma}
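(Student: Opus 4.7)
My plan is to prove $\gamma_c=r/2$ by matching upper and lower power bounds at the scale $n^2 p^r\asymp 1$, which is heuristically the density at which a $\rho\times\rho$ sub-box of $B_n$ is likely to contain $r$ black sites; such a cluster is what nucleates full occupation via the $(1,1)$ rule followed by threshold-$r$ firing.

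For the upper bound I would take $n\gg p^{-r/2}$ and partition $B_n$ into $\rho\times\rho$ lots. The expected number of lots containing $r$ or more black sites is of order $n^2 p^r\to\infty$, so with high probability some lot is heavy; in such a lot the $r$ (generically distinct-coordinate) black sites produce an $r\times r$ grid after a single application of the $(1,1)$ rule. Because this grid has span at most $\rho-1$, any site just outside it on a grid row sees all $r$ occupied grid points, triggering the threshold-$r$ horizontal rule, and similarly for the grid columns. Thus all $r$ grid rows and $r$ grid columns become fully occupied, after which $B_n$ fills in $O(n)$ steps by the routine observation that one occupied line together with a black site in an adjacent row fills the next parallel line. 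Lemma~\ref{lemma-ub-easy} then yields $T\lesssim n\sim p^{-r/2}$.

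For the lower bound I would work on $B_n$ with periodic boundary at $n\ll p^{-r/2}$ and build an inert cover $R$ of $\xi_0$ as follows. Group the black sites into \emph{clusters} by iteratively merging any two sites that lie within a common $(2\rho+1)\times(2\rho+1)$ box, and set $R:=\bigcup_{C} X_C\times Y_C$, where $X_C,Y_C$ are the sets of $x$- and $y$-coordinates of sites in $C$. By construction $R\supseteq\xi_0$ and $R$ is closed under the $(1,1)$ rule. A first-moment computation shows that the expected number of $r$-chains of black sites with each consecutive pair within the interaction box is $O(n^2 p^r)=o(1)$, so with high probability every cluster has size at most $r-1$; a similar union bound rules out coincidences of $x$- or $y$-coordinates among distinct clusters within range $\rho$. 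Under these events no row or column of $R$ contains $r$ elements within any window of length $2\rho+1$, so the threshold-$r$ rules do not fire on $R$, and monotonicity then gives $\xi_\infty\subseteq R$. Since $|R|\le(r-1)|\xi_0|=O(n^2 p)$, we have $|\xi_\infty|/n^2\to 0$ in probability, and Lemma~\ref{lemma-lb-easy} gives $T\gtrsim n/(2\rho)$; letting $n$ approach $p^{-r/2}$ yields $\gamma_\ell\ge r/2$.

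The main technical obstacle will be verifying the inertness of $R$: rows or columns from distinct cluster grids could in principle align to produce $r$ collinear occupied sites within a window of length $2\rho+1$, so I must carefully enumerate such coincidences and show via union bounds on $\xi_0$ that all of them are avoided with high probability at the chosen scale of $n$.
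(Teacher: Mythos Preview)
Your upper bound is essentially correct and close in spirit to the paper's, though the ``generically distinct-coordinate'' hedge needs to be made precise; the paper simply looks for $r$ diagonally adjacent black sites, which automatically have distinct coordinates and occur whp once $n^2p^r\gg 1$. Once an $r\times r$ occupied square forms, its rows and columns spread by the threshold-$r$ rules alone (no further black sites needed), so the spanning argument is even simpler than you indicate.

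The lower bound, however, has a genuine gap. Your assertion that $R=\bigcup_C X_C\times Y_C$ is closed under the $(1,1)$ rule is false. Take $\rho=3$, $r=3$, and black sites forming clusters $C_1=\{(0,0),(2,6)\}$ and $C_2=\{(4,13),(9,9)\}$; these are separate under your $(2\rho+1)$-box merging, and neither is a $3$-chain. The grid of $C_1$ contains $(2,6)$ and the grid of $C_2$ contains $(4,9)$; hence $(4,6)\notin R$ sees the horizontal neighbor $(2,6)$ and the vertical neighbor $(4,9)$, so $(1,1)$ fires at $(4,6)$. The obstacle you flag in your final paragraph concerns the threshold-$r$ rules, but the real failure is already at the $(1,1)$ level: grids of distinct clusters interact whenever $\mathrm{dist}(X_{C_1},X_{C_2})\le\rho$ and $\mathrm{dist}(Y_{C_1},Y_{C_2})\le\rho$, and a first-moment count shows such interactions are plentiful at scale $n\sim p^{-r/2}$ for $r\ge 3$. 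Closing $R$ under $(1,1)$ does not obviously help, since the only universal bound $|R|\le|\xi_0|^2\sim n^4p^2$ can exceed $n^2$.

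The paper takes a different route that avoids constructing an explicit inert cover. It introduces a mesoscopic scale $M$ with $1\ll M^r\ll 1/(np^{r/2})$, calls a line \emph{saturated} once it carries $r$ occupied sites in an interval of length $M$, and lets $\tau$ be the first saturation time. For $t\le\tau$ only the $(1,1)$ rule can fire; tracing a newly occupied site $x$ backward in time along the line perpendicular to a saturating line $L$ yields a black site within distance $(r-1)\rho$ of $x$ on that perpendicular (the chain cannot run $r$ steps without itself producing an earlier saturation). Hence the $r$ saturation sites on $L$ force $r$ distinct black sites in an $M\times M$ box, an event of probability $O(M^{2r}n^2p^r)=o(1)$. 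On $\{\tau=\infty\}$ every line carries at most $(r-1)(n/M+1)$ occupied sites, so $|\xi_\infty|=o(n^2)$, and Lemma~\ref{lemma-lb-easy} finishes.
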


\begin{proof}
As $r$ diagonally adjacent black sites occupy an $r\times r$ square in $r$ steps. If $n\gg p^{-r/2}$, $B_n$ contains such an $r\times r$ box with high probability, so $\gamma_c\le r/2$. 

To prove the lower bound, consider, as usual, the dynamics on the box $B_n$ with periodic boundary. 
Assume $n\ll p^{- r/2}$. 
Choose some sequence $M$ such that $1\ll M^r\ll 1/(p^{r/2}n)$. 

Call a line {\it saturated\/} if it contains $r$ occupied 
sites within an interval of length $M$. Let $\tau$ be the first time 
$B_n$ contains a saturated line. If a site $x$ is occupied for the first time 
at time $t\in [1,\tau]$, 
then both $\cN_x^h$ and $\cN_x^v$ must contain an occupied site at time $t-1$. 
Each of these occupied sites is either black or becomes occupied at some previous time $u\in[1,t-2]$.
Continuing backwards in time, we find that on each line through
$x$ there is a black site within distance $(r-1)\rho$, as we cannot continue for 
more than $r-1$ steps, or else a line through $x$  would be saturated before time $\tau$. If $L$ is a line 
that becomes saturated at time $\tau$, and $x_1,\ldots, x_r$ are 
the occupied sites on $L$ at time $\tau$ within distance $M$, then, for each $i$, 
either $x_i$ is black or there is a black site within distance $(r-1)\rho$ 
on the line through $x_i$,
perpendicular to $L$. Therefore, if $B_n$ ever contains a saturated 
line, there is an
$M\times M$ box within $B_n$ that contains $r$ black sites, and this 
happens with probability at most $M^{2r}n^2p^r\to 0$.  If the box never contains a saturated line,
$$
|\xi_\infty|\le n\cdot (n/M+1)(r-1)\ll|B_n|, 
$$
and Lemma~\ref{lemma-lb-easy} implies that $\gamma_c\ge r/2$.
\end{proof}

\begin{lemma}\label{lemma-L2}
Assume $\cZ$ has minimal counts $(0,r)$, $(r,0)$ and 
$(2,2)$ for $r\ge 6$. Then $\gamma_c=2r/3$.
\end{lemma}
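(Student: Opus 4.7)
The plan is to show both bounds on $\gamma_c = 2r/3$ by identifying that the dynamics spans $B_n$ if and only if at some time it contains two parallel fully-occupied lines at vertical distance at most $\rho$. Once such $L_1, L_2$ are present, every row within vertical distance $\rho$ of both has two vertical occupied neighbors from $L_1 \cup L_2$; the $(2,2)$-corner then fills the row via a threshold-$2$ line bootstrap driven by any pair of initial black points at horizontal distance $\leq \rho$ on it, and such a pair exists on every such row with high probability when $np^2 \to \infty$ (true for $n \gg p^{-2r/3}$ and $r \geq 6$). After the resulting strip of $\rho+1$ consecutive occupied rows is formed, the $(0, r)$-corner extends it (using $\rho \geq r$), so $B_n$ is spanned in $\cO(n)$ steps.

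For the upper bound with $n \gg p^{-2r/3}$, say a row is \emph{saturated} if it contains $r$ black points within some interval of length $\leq 2\rho + 1$; this makes it fully occupied via line growth in $\cO(n)$ steps. Since rows are independent, each being saturated with probability of order $np^r$, the expected number of pairs of saturated rows at vertical distance exactly $1$ is of order $n^3 p^{2r} = (np^{2r/3})^3 \to \infty$, so with high probability such a pair exists. The mechanism of the first paragraph then spans $B_n$ in $\cO(n)$ further steps, and Lemma~\ref{lemma-ub-easy} delivers $T \lesssim n$.

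For the lower bound, consider $B_n$ with periodic boundary, $n \ll p^{-2r/3}$, and use the same saturated-line definition. Let $\mathcal{B}$ be the event that two parallel saturated lines at vertical distance $\leq \rho$ ever appear; I will show $\prob{\mathcal{B}} \to 0$ by an amortization analogous in spirit to Lemma~\ref{lemma-L1}. Any occupied site on a saturated line is either initial black or arose from a $(2,2)$-occupation backed, through a finite recursion on the perpendicular axis, by an initial black site at bounded distance; this backward recursion cannot continue more than a bounded number of steps along any single line (else that line would itself become saturated earlier). Consequently, forming two nearby parallel saturated lines requires at least $2r$ distinct initial black points lying in the union of two windows of size $\cO(\rho) \times \cO(\rho)$ around the two saturating intervals, and summing over the $\cO(n^3)$ positions of such a configuration yields $\prob{\mathcal{B}} \leq \cO(n^3 p^{2r}) = o(1)$.

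On $\mathcal{B}^c$ I bound $|\xi_\infty|$. Each saturated line that does appear is isolated from any parallel neighbor within vertical distance $\rho$ and contributes at most $n$ sites (the fully occupied line). The expected number of saturated lines in $B_n$ is $\cO(n^2 p^r)$, for a total of $\cO(n^3 p^r) = o(n^2)$ sites (since $np^r = o(p^{r/3}) \to 0$). All other occupied sites come from $(2,2)$-occupations whose backward trees have bounded size and reach (no line becoming saturated beyond the isolated ones), so a tree-counting argument analogous to that of Lemma~\ref{lemma-L1} gives at most $\cO(n^2 p) = o(n^2)$ such sites. Thus $|\xi_\infty| = o(n^2)$ with high probability, and Lemma~\ref{lemma-lb-easy} gives $T \gtrsim n$. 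The main technical obstacle is justifying the cost-$2r$ amortization in the previous paragraph, namely verifying that no $(2,2)$-substitution scheme---including those using fully occupied perpendicular lines---reduces the initial-black requirement below $2r$.
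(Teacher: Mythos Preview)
Your upper bound is essentially the paper's: two adjacent rows each containing an interval of $r$ black sites (probability $\asymp n^3p^{2r}$), then the $(2,2)$-corner and $np^2\gg 1$ let you grow a thick strip and span $B_n$.

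The lower bound, however, has a genuine gap at exactly the point you flag as ``the main technical obstacle.'' Your claim that two nearby parallel saturated lines force at least $2r$ black points inside two $\cO(\rho)\times\cO(\rho)$ windows is false, and the resulting bound $\prob{\mathcal B}=\cO(n^3p^{2r})$ is too strong. The mechanism that breaks it is the one you mention but do not control: a previously saturated \emph{perpendicular} line $L_3$ can supply one of the $r$ occupied (saturation) sites on $L_1$, and the black sites that saturated $L_3$ may sit at distance of order $n$ from $L_1$, not $\cO(\rho)$. Iterating, $L_3$ itself may borrow one saturation site from yet another line. The paper's analysis tracks these cases and obtains
\[
\prob{\tau_2<\infty}\le CM^{4r}\bigl[n^2p^{2r-2}+n^3p^{2r}+n^4p^{3r-2}+n^5p^{4r-4}\bigr],
\]
where the last term corresponds to each of $L_1,L_2$ borrowing one site from a perpendicular line (so only $r-1$ nearby black sites are needed per line, spread over \emph{four} windows, not two). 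This term vanishes for $n\ll p^{-2r/3}$ only when $10r/3\le 4r-4$, i.e.\ $r\ge 6$; this is where the hypothesis $r\ge 6$ enters. Your bound $n^3p^{2r}\to 0$ holds for every $r$, which is already a sign that something has been lost. The first term $n^2p^{2r-2}$ (only $2r-2$ black sites in a single box, arising when the saturation windows of $L_1$ and $L_2$ overlap) also contradicts your ``at least $2r$'' claim.

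A secondary issue: your count of non-saturated occupied sites via ``backward trees of bounded size'' is not justified, since those trees can use sites on the (isolated) saturated lines. The paper sidesteps this by the direct observation that any non-saturated line has at most $r-1$ occupied sites per interval of length $M$, giving $|\xi_\infty|\le (n/M)n + n(n/M+1)(r-1)=o(n^2)$ on $\{\tau_2=\infty\}$; this replaces your tree argument and requires saturating intervals of a slowly growing length $M$, not $\cO(\rho)$.
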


\begin{proof} To prove the upper bound assume first that $n\gg p^{- 2r/3}$ 
and consider the dynamics on $B_n$. 
This box will be spanned by time $Cn$ if the following event happens: 
there are two neighboring horizontal 
lines in $B_n$ that each contain an interval of $r$  black sites, and next to this pair of lines there 
is a strip of $r-2$ additional horizontal lines each of which contains $2$ neighboring 
black sites. This happens with high probability  as $n(np^r)^2\gg 1$ and 
$np^2\gg p^{-( 2r/3-2)}\gg 1$. 
It follows that $\gamma_c\le 2r/3$.

To prove the lower bound, assume $n\ll p^{- 2r/3}$. We will consider 
the slow version of the dynamics on $B_n$ with periodic boundary, in which 
we occupy at every time step a single site that can be occupied, chosen arbitrarily, provided it exists. 

We will often refer to sites and lines as being {\it near\/} each other or 
{\it nearby\/}. Within the context of this 
proof, this means that that the distance between objects in question is at most 
$Cr\rho$, for some suitable large constant $C$.  We will choose $M$ so that 
$1\ll M\ll 1/(p^{2r/3}n)^\epsilon$, for any $\epsilon>0$. For a line $L$ and 
a set of sites $S$, we call $S$ {\it $M$-near\/} $L$ if: all sites of $S$ are near 
$L$; and the projection of $S$ to $L$ fits into an interval of length $M$. 

Again, a {\it saturated\/} line contains $r$ occupied sites within an interval of length $M$, which we call the {\it saturation sites\/} of this line.
(Note that they do not need to be near each other as $M$ is much larger than $Cr\rho$.)
Upon saturation, we declare the line completely occupied. If a line is saturated 
initially it may have more than one choice of $r$ saturation sites in which case we just 
make an arbitrary selection of them. If $x$ is a saturation site on a line 
$L$, any black site near $x$ on a line perpendicular to $L$ through $x$ is  
{\it associated\/} to $x$.

Our strategy is to prove that saturated parallel lines likely remain at distance of order $M$.
To this end, let time $\tau_2$ be the first time when there are two parallel 
saturated lines within distance $3M$. Assume that $\tau_2<\infty$. 
Denote the resulting lines by $L_1$ and $L_2$, and assume they are 
horizontal and that $L_1$ was saturated first, at time $\tau_1<\tau_2$. The main problem that 
we face in the rest of the argument is that there may be isolated vertical or horizontal lines
that get saturated before time $\tau_2$ and help in saturation of $L_1$ and $L_2$
(see Figure~\ref{fig-L2}). 
We claim that one of the following must hold: 
\begin{enumerate}
\item [(1)] there are at least $2r-2$ black sites in some $5M\times 5M$ box inside $B_n$; or 
\item [(2)]
the following occur disjointly: 
\begin{itemize}
\item either there are $r$ black sites $M$-near $L_1$, or there are $r-1$ black 
 sites $M$-near $L_1$
{\it and\/} a line near them perpendicular to $L_1$, which disjointly has  
$r-1$ $M$-near black sites; and
\item the same is true for $L_1$ replaced by $L_2$. 
\end{itemize}
\end{enumerate}

If a site $x$ on $L_1$ gets occupied at time $t\in [1,\tau_1]$, and it does 
not lie on a saturated vertical line at time $t$,  
then it must have two previously occupied 
sites on the vertical part of its neighborhood; if at 
least one of them is non-black, we can find two previously occupied sites 
in the vertical part of its neighborhood, and so on. Therefore, if such an $x$ is a
saturation site for $L_1$, it is either black or there are 
two black sites associated to $x$. Similarly, every saturation site $y$ on $L_2$
that is occupied at time $t\le \tau_2$ and does not
lie on a saturated vertical line at time $t$
is either black or has one black site off $L_1$ associated to it.

Assume first that a saturation site 
on $L_1$ and a saturation site on $L_2$ 
lie on nearby vertical lines. We claim that in this case (1) holds. 
Note that then all saturation sites 
on $L_1$ and $L_2$ 
are within horizontal distance $3M$ and therefore intersect at most 
one vertical line that is saturated by time $\tau_2$; this follows by minimality of $\tau_2$. 
This vertical line contains 
at most two saturation sites for $L_1$ and $L_2$, one on each of these two horizontal 
lines. Now consider all vertical lines through 
saturation sites on $L_1$ and $L_2$ that are {\it not\/} saturated at time $\tau_2$.  
Assume that $a\in [0, r]$ of these lines 
contain two saturation sites, one on $L_1$ and one on $L_2$.  Note that 
each of these $a$ lines must contain two black sites near
$L_1$. If $a=r$, then (1) clearly holds. If $a\le r-1$, there
are a least $2(r-1)-2a$ remaining lines, each 
of which contains at least one black site near $L_1$ or $L_2$,  and consequently these black sites are within vertical distance $3M$ of each other. 
This produces $2(r-1)$ black sites that satisfy (1).  

Now assume that no vertical line through a saturation site
on $L_1$ is near a vertical line through a saturation site
on $L_2$.  
Then each of the two lines $L_1$ and $L_2$ may have zero or one saturation site included
in a previously saturated vertical line. 
If neither have such a vertical line, then clearly (2) occurs.

If a vertical line $L_3$ covers a 
saturation site of $L_1$ and is saturated before time $\tau_1$, then there are
$r-1$ saturation 
sites for $L_1$ not on $L_3$; they are either all black or else there 
are at least $r$ associated black sites for $L_1$ not on $L_3$. In the 
former case, we have two additional possibilities. The first possibility is that a saturation site 
for $L_3$ is near a saturation site for $L_1$, in which case we use the fact that 
at least $r-1$ saturation 
sites, and therefore at least $r-1$ associated black sites, for $L_3$ are not on $L_1$
and (1) occurs. 
The second possibility is that no 
saturation sites for $L_1$ and $L_3$ are nearby in which case $L_3$ has at least $r-1$ associated 
black sites, disjoint from the $r-1$ black sites on $L_1$.  

Now we look at $L_2$, whose saturation sites are now horizontally far away from those for $L_1$. 
If there is no vertical saturated line at time $\tau_2$ that covers one of the 
saturation sites for $L_2$, we have $r$  black sites $M$-near $L_2$, disjoint 
from those for $L_1$ and $L_3$ (if it exists) in the previous paragraph. 
Otherwise, let $L_4$ be such a line. Then
$L_2$ has $r-1$ associated black sites not on $L_1\cup L_4$. If a saturation site
for $L_4$ is within distance $3M$ of $L_2$, then it is possible that one saturation site for $L_4$ 
is on $L_1$ and we either have: $r$ associated black sites for 
$L_4$ (which are all within distance $4M$ of $L_2$); or $r-1$ black saturation sites on $L_4$ ---
in either case (1) occurs. The last possibility (see Figure~\ref{fig-L2}) is that all saturation sites 
for $L_4$ are at least distance $3M$ from $L_2$, in which case it is possible that another previously
saturated line (not $L_1$) covers one of them, which means that we have at least $r-1$ associated 
black sites, disjoint from all others found so far. It follows that (2) holds and the 
claim is established. 

From the claim, we get that 
\begin{equation}
\begin{aligned}\label{eq-L2-1}
\prob{\tau_2<\infty}  &\le CM^{4r}\left[n^2p^{2r-2}+n(np^r)^2+n  (np^{r-1})^2 np^r + n (np^{r-1})^4\right]
\\&=  CM^{4r}\left[n^2p^{2r-2}+n^3p^{2r}+n^4p^{3r-2}+n^5p^{4r-4}\right]\\
&\to 0,
\end{aligned}
\end{equation}
provided that $2r/3\le r-1$, $8r/3\le 3r-2$, and $10r/3\le 4r-4$, which all hold for $r\ge 6$.

If $\tau_2=\infty$, then we have at most $n/M$ saturated lines in $\xi_\infty$, so that 
$$
|\xi_\infty|\le (n/M)\cdot n+n\cdot (n/M+1)(r-1)\ll |B_n|,
$$
thus Lemma~\ref{lemma-lb-easy} implies that $\gamma_c\ge 2r/3$.
\end{proof}

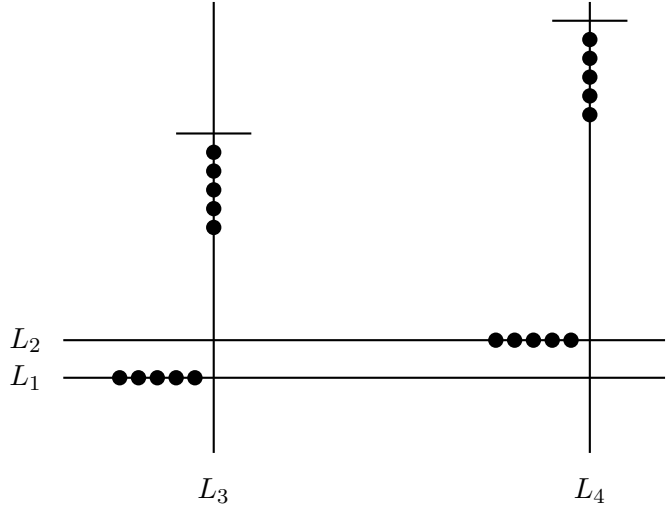
\begin{figure}
\begin{center}
\begin{tikzpicture}
 \draw [color=black,thick] (0,3)--(8,3);
 \draw [color=black,thick] (0,3.5)--(8,3.5);
 \node at (-0.5,3) {$L_1$};
 \node at (-0.5,3.5) {$L_2$};
 \draw [color=black,thick] (2,2)--(2,8);
 \draw [color=black,thick] (7,2)--(7,8);
 \node at (2,1.5) {$L_3$};
 \node at (7,1.5) {$L_4$};
 
 \foreach \y in {0,1,2,3,4}
     \fill [black] (0.75+0.25*\y,3) circle (2.9pt);
 \foreach \y in {0,1,2,3,4}
     \fill [black] (5.75+0.25*\y,3.5) circle (2.9pt);
 \foreach \y in {0,1,2,3,4}
     \fill [black] (2,5+0.25*\y) circle (2.9pt);
  \foreach \y in {0,1,2,3,4}
     \fill [black] (7,6.5+0.25*\y) circle (2.9pt);
     \draw [color=black,thick] (1.5,6.25)--(2.5,6.25);
 \draw [color=black,thick] (6.5,7.75)--(7.5,7.75);
 \end{tikzpicture}
\end{center}
\caption{An illustration of one case from the proof of Lemma~\ref{lemma-L2}, with $r=6$ of the configuration of black sites leading to the last term on the first line 
of (\ref{eq-L2-1}). One of the saturation sites on $L_1$ is generated by the previously saturated line 
$L_3$, which, in turn, may have one of its saturation sites generated by a previously saturated line. 
Analogously for $L_2$.}
\label{fig-L2}
\end{figure} 

The restriction $r\ge 6$ is likely not necessary. Indeed, for $r=3$ we have a separate proof
that $\gamma_c=2$ (Proposition~\ref{prop-line-perturbed}), while for $r=4,5$ the argument for the lower bound 
can possibly be extended (starting with the 
two previously saturated lines that contribute to saturation sites on $L_3$ and $L_4$ in Figure~\ref{fig-L2}), which 
we however did not pursue.

\begin{lemma}\label{lemma-L3}
Assume $\cZ$ is given by the minimal counts $(0,r)$, $(s,s)$, and $(r,0)$, with
$1\le s<r$. 
Then a lower and an upper power are, respectively,
$$\gamma_\ell=
 \frac{\hm(r-\hm+1)}{1+\hm}, \text{ where }\hm=\min(\lfloor(-1+\sqrt{4r+9})/2\rfloor,\lfloor s/2\rfloor),
$$
and
$$
\gamma_u= \frac{rs}{s+1}.
$$ 
%
\end{lemma}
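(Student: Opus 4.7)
The plan is to establish the upper bound by extending the nucleation construction of Lemma~\ref{lemma-L2}, and to prove the lower bound by adapting the clump-based inductive argument from Section~\ref{sec-bp-lower} to accommodate the corner rule $(s,s)$.

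For the upper bound $\gamma_u = rs/(s+1)$, assume $n \gg p^{-rs/(s+1)}$. The expected number of vertically adjacent strips of $s$ horizontal rows inside $B_n$, each carrying an $r$-block of nearby black sites, is of order $n(np^r)^s = n^{s+1} p^{rs}$, which tends to infinity; such a strip therefore exists with high probability and becomes fully occupied in $O(n)$ steps via the pure horizontal rule $(r,0) \notin \cZ$. Next, adjoin additional rows one at a time: once $s$ adjacent rows are occupied, any site in a neighboring row has vertical count $s$, so the $(s,s) \notin \cZ$ corner rule fires as soon as the new row contains $s$ nearby black sites. Since $n \gg p^{-s}$ (which follows from $rs/(s+1) \ge s$ when $r>s$), each extension succeeds with probability tending to one, so after $r-s$ extensions we have $r$ fully occupied adjacent rows, at which point any further neighboring site has vertical count $r$, and $B_n$ spans in $O(n)$ more steps. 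Lemma~\ref{lemma-ub-easy} then delivers the desired upper bound on $T$.

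For the lower bound $\gamma_\ell$, I would work on the box $B_n$ with periodic boundary and consider a comparison dynamics that fills a full horizontal or vertical line as soon as a threshold concentration of occupied sites appears in a small window, and additionally triggers the corner rule whenever a site is adjacent to parallel clumps of sufficient size in both directions. Track $L_m^{(\ell)}$, the number of $m$-clumps of parallel lines after step $\ell$, in analogy with Lemma~\ref{lemma-bp-lb-key}. Inductively establish upper power estimates $L_m^{(\ell)} \lep \beta(m)$ for a function $\beta$ derived from the same recursion used for bootstrap, modified so that the corner rule shortcut is credited only when the available clumps on both axes reach size $s$. Optimizing the resulting expression over integer $m$ yields the critical value $\hm$ in the stated admissible range. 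The cap $\hm \le \lfloor (\sqrt{4r+9}-1)/2\rfloor$ comes from the usual bootstrap-style optimality criterion analogous to Lemma~\ref{lemma-bp-lb-dec}, while the additional cap $\hm \le \lfloor s/2 \rfloor$ arises because parallel clumps whose sizes pair up to reach $s$ on both axes would otherwise allow the corner rule to collapse the recursion. After a bounded number of steps the comparison dynamics stabilizes, $|\xi_\infty| = o(n^2)$ with high probability, and Lemma~\ref{lemma-lb-easy} gives $T \gtrsim p^{-\gamma_\ell}$.

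The principal obstacle is the interplay between the two firing mechanisms, the pure threshold $r$ on a single line and the corner rule $(s,s)$ that couples the two axes. The two competing caps defining $\hm$ reflect exactly this interplay: the bootstrap-style term controls how fast the clump hierarchy can grow via the $r$-threshold rule on a single line, while the $\lfloor s/2 \rfloor$ cap prevents the corner rule from providing an unlimited shortcut once clumps become comparable to $s$ on both axes. Calibrating the clump recursion so that both constraints emerge cleanly from the induction step, without either over- or under-counting the firing events enabled by the corner rule, is the delicate part of the proof.
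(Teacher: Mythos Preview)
Your upper bound argument is correct and matches the paper's: find $s$ adjacent horizontal lines each with an interval of $r$ black sites (likely since $n(np^r)^s\gg 1$), then extend by $r-s$ further lines each needing only $s$ nearby black sites via the corner rule (likely since $np^s\gg 1$), and finally span.

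Your lower bound, however, takes a different route from the paper and is not worked out in enough detail to be checked. You propose to adapt the clump-based induction of Section~\ref{sec-bp-lower}, tracking $L_m^{(\ell)}$ and deriving a recursion modified for the corner rule. The paper does something much more direct, in the style of Lemmas~\ref{lemma-L1} and~\ref{lemma-L2} rather than Lemma~\ref{lemma-bp-lb-key}. It picks an integer $m\le s/2$, runs the slow dynamics on $B_n$ with periodic boundary, and lets $\tau$ be the first time that $m$ parallel saturated lines appear within distance $3rM$ of each other. The point of the constraint $m\le s/2$ is concrete: before time $\tau$ there are fewer than $m$ saturated lines in each direction near any point, so a saturation site that is occupied via the $(s,s)$ rule must still have at least $s-(m-1)\ge m$ black sites nearby on its perpendicular line. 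After grouping the $m$ lines by horizontal proximity of their saturation sites and allowing at most $m-1$ perpendicular saturated lines to help, one gets the bound (up to powers of $M$)
\[
\prob{\tau<\infty}\le C\,n^{1+m}p^{m(r-m+1)},
\]
and the formula $\gamma_\ell=\hm(r-\hm+1)/(1+\hm)$ drops out by optimizing $m(r-m+1)/(1+m)$ over $m\in[1,\lfloor s/2\rfloor]$; the first cap on $\hm$ is simply where this one-variable function peaks.

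So the specific exponent does not come from a bootstrap-style $\beta$-recursion, and your identification of the two caps with features of such a recursion is off. The $\lfloor s/2\rfloor$ cap is not about ``clumps pairing up to reach $s$ on both axes''; it is the largest $m$ for which, prior to having $m$ close parallel saturated lines, the corner rule cannot fire without leaving a black-site witness on the perpendicular line. Your clump-induction plan might in principle be pushed through, but you have not specified $\beta(m)$ or the modified recursion, and the paper's direct argument is both shorter and the natural continuation of the preceding two lemmas in the same section.
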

 
\begin{proof}
For the upper power, consider the dynamics on $B_n$ with $n\gg p^{-\gamma_u}\ge p^{-s}$. 
Every fixed line 
is then likely to contain an interval of $s$ black sites. Moreover, a horizontal strip of width $s$ with each of 
its horizontal lines containing an interval of $r$ black sites is also likely 
to exist within $B_n$, as 
$n(np^r)^s=n^{s+1}p^{rs}\gg 1$. With high probability, then, there is such a strip with $r-s$ neighboring horizontal 
lines, such that each contains an interval of $s$ black sites. This configuration causes complete occupation of $B_n$ 
by time $3n$.

For the lower power, assume $n\ll p^{-\gamma_\ell}$.
adopt the same slow version of the dynamics as in the previous proof, with 
$M$ such that $1\ll M\ll 1/(np^{-\gamma_\ell})^\epsilon$ for every $\epsilon>0$, and the same definitions of saturated lines and nearby points. Pick an integer $m\le s/2$.  

Let $\tau$ now be the first time the dynamics produces $m$ saturated parallel lines within distance 
$3rM$. We claim that $\prob{\tau<\infty}\to 0$, for a judicious choice of $m$. Assume that $\tau<\infty$ and 
that the $m$ lines produced at time $\tau$ are horizontal. 
At time $\tau$, each occupied site on one of these lines either: is on a previously 
saturated vertical line;
is a black site; or has $s-m+1$ black sites nearby on the vertical line through it. 

Divide the $m$ horizontal
lines into $k$ groups of $a_1,\ldots, a_k$ lines so that, for two lines $L_1$ and $L_2$ in 
different groups, no vertical line through an occupied site on $L_1$ is near a
vertical line through an occupied site on $L_2$. Here $a_1+\ldots+a_k=m$ and $1\le a_i\le m$.
We now proceed to finding an upper bound for the probability for a fixed $m$ and a fixed division.
 
Once we fix the vertical locations of the $m$ lines, the 
first group, say, yields the probability bound a power of $M$ times $np^{a_1(r-m+1)}$, as:
every occupied site on these lines that does not lie on a vertical saturated line 
is either black or has  $s-a_1\ge a_1$ nearby black sites on its vertical line; and 
there are at most $m-1$ saturated vertical lines that can help, by minimality of $\tau$. 
This results in the bound a power of $M$ times 
$$n\cdot n^kp^{(a_1+a_2+\cdots+a_k)(r-m+1)} =n^{1+k}p^{m(r-m+1)}\le n^{1+m}p^{m(r-m+1)}.$$
The proof of the lower bound now reduces to the routine verification that the function 
$$
\frac{m(r-m+1)}{1+m}=r+3-\left(1+m+\frac{r+2}{1+m}\right), 
$$
achieves its maximum for $m\in [1,\lfloor s/2\rfloor]$ at $m=\hm$. 
\end{proof}

\begin{lemma}  \label{lemma-L4}
Assume $\cZ$ is given by the minimal counts $(0,r)$, $(s,1)$, and $(r,0)$, with
$1\le s<r$. 
For $s\le r/2$ we have pure critical 
power $\gamma_c=r/2$. For $s>r/2$, 
we have lower and upper powers $s-\frac{s}{r}$ and  $s+1-\frac{2s+1}{r+1}$. 
\end{lemma}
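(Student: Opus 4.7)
The plan is to treat the upper bound uniformly across both regimes and to handle the two lower-bound cases separately.

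For the upper bound I would adapt a ``packed strip'' construction in the spirit of Lemma~\ref{lemma-line-sym-upper} and Proposition~\ref{prop-223}. In $B_n$, search for a vertically consecutive strip of $r$ horizontal rows such that the bottom row contains an interval of $r$ consecutive black sites and each of the other $r-1$ rows contains an interval of $s$ consecutive black sites (at arbitrary horizontal positions on that row). Once this configuration is present, the bottom row saturates via $(r,0)$; each subsequent row saturates via $(s,1)$ with vertical help from the filled row below; and once $r$ consecutive rows are filled, the $(0,r)$ rule takes over and spans $B_n$ in $O(n)$ steps. For $s\le r/2$ and $n\gg p^{-r/2}$ one has $np^s\to\infty$, so each row is overwhelmingly likely to contain an $s$-run and the expected count of such strips is $\sim n^2 p^r\to\infty$. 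For $s>r/2$ the $s$-runs are rare, and the expected count becomes $\sim n^{r+1}p^{r+(r-1)s}$, which is large precisely when $n\gg p^{-(r+(r-1)s)/(r+1)}=p^{-(s+1-(2s+1)/(r+1))}$. Lemmas~\ref{lemma-prelim-1} and~\ref{lemma-ub-easy} then deliver both stated upper bounds.

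For the lower bound when $s\le r/2$ I would appeal to Lemma~\ref{lemma-L1} via monotonicity in the zero-set. The zero-set of Lemma~\ref{lemma-L4} contains that of Lemma~\ref{lemma-L1}: the two agree on row $v=0$, and for $v\ge 1$ the L1 zero-set consists of the single column $u=0$, which is contained in the present one of width $s\ge 1$. Hence, started from the same $\xi_0$, the occupied set for the present dynamics is always contained in the L1 one, so the first-passage time $T$ here is bounded below by the corresponding L1 first-passage time, which is $\gtrsim p^{-r/2}$ by Lemma~\ref{lemma-L1}. Together with the upper bound this gives $\gamma_c=r/2$ and its purity.

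For the lower bound when $s>r/2$ I would adapt the clump-counting induction developed for asymmetric line growth in Lemmas~\ref{lemma-asym-lower-ind} and~\ref{lemma-line-asym-lower}. On $B_n$ with periodic boundary and $n\ll p^{-(s-s/r)}$, declare a line \emph{saturated} when it contains $r$ occupied sites inside a window of length $O(\rho)$; before any line is saturated only the $(s,1)$ rule can activate new sites. Define $V^{(\ell)}_m,H^{(\ell)}_m$ and their increments exactly as in the line growth proof and prove by induction on $\ell$ the same-shaped upper power bounds, using FKG to decouple and optimizing over how an $m$-clump may form from a smaller clump plus perpendicular clumps assisted by black helpers. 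After a bounded number of rounds the increments vanish, the configuration is inert with $|\xi_\infty|=o(n^2)$, and Lemma~\ref{lemma-lb-easy} closes the argument. The hard part is to verify that the $(s,1)$ rule---genuinely incomparable with the line growth rule $v\ge s$, as each activates at pairs the other does not---still yields the exponent $s-s/r$: the binding constraint in both dynamics is the cost of assembling $r$ occupied sites inside a single line window, and the extra activations at $(u,v)=(s,1)$ do not accelerate this enough to shift the scaling. Carrying this optimization through with the additional $(s,1)$ branching is the main technical obstacle.
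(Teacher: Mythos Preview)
Your upper bound argument (the packed strip) and your lower bound for $s\le r/2$ (monotonicity comparison with Lemma~\ref{lemma-L1}) are exactly what the paper does.

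For the lower bound when $s>r/2$, however, you miss a one-line observation that makes the ``main technical obstacle'' disappear. You compare the present zero-set to the line growth zero-set $[0,r-1]\times[0,s-1]$ and correctly note that these are incomparable. But compare instead to its \emph{reflection}, the rectangle $[0,s-1]\times[0,r-1]$. The L4 zero-set has row $v=0$ of width $r$ and rows $v=1,\ldots,r-1$ of width $s$; the reflected line growth rectangle has every row $v=0,\ldots,r-1$ of width $s$. Hence $[0,s-1]\times[0,r-1]\subseteq\cZ_{L4}$, so the L4 dynamics is dominated by (is slower than) the reflected line growth dynamics. By reflection symmetry of $T$ under swapping the two axes, the reflected line growth has the same lower power $(r-1)s/r=s-s/r$ established in Lemma~\ref{lemma-line-asym-lower}, and the desired lower bound for L4 follows immediately by monotonicity. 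This is precisely the paper's argument: ``lower bounds follow from Lemmas~\ref{lemma-L1} and~\ref{lemma-line-asym-lower}.''

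So your proposal is incomplete rather than wrong: the clump-counting redo you outline may well be pushed through, but you do not carry it out, and it is entirely unnecessary once you pick the correct comparison zero-set.
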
 
 
\begin{proof}
We only need to provide the  
upper bounds, as lower bounds follow from Lemmas~\ref{lemma-L1} and~\ref{lemma-line-asym-lower}. 
As usual, we consider the dynamics on $B_n$. 
Observe that a horizontal strip of width $r$, which contains 
a horizontal line with an interval of $r$ black sites and $r-1$ lines, 
all with an interval of $s$ black sites, will occupy $B_n$ by time $(r+1)n$.

When $s\le r/2$, and $n\gg p^{-r/2}$, 
such a strip exists with high probability, as $n(np^r)\gg 1$ and $np^s\gg 1$. 
When $s>r/2$ and $n\gg p^{-( s+1-{(2s+1)}/{(r+1)})}$, the same holds as
$$n(np^r)(np^s)^{r-1}=n^{r+1}p^{r+s(r-1)}\gg 1.$$
\end{proof}

\begin{proof}[Proof of Theorem~\ref{thm-L-finite}]
The four parts are established by the four lemmas in this section.
\end{proof}

\section{Infinite L-shaped Young diagrams}\label{sec-L-infinite}

In this section, we consider the infinite Young diagrams with two minimal counts
$(s_1,s_2)$ and $(0,r)$,  
with $r\ge 2$, $1\le s_2\le r-1$, and $s_1\ge 1$. We let 
$$
\gamma=\frac{rs_1+s_2}{1+s_1}, 
$$
so that our plan is to prove $\gamma_c=\gamma$. We will make use of the dynamics on a rectangular
box $B$ of dimensions $A\times Ap^{-s_2}$, for a suitably chosen $A$ that increases as a power 
of $p$. 

\begin{lemma}\label{lemma-L-inf-ub} The upper bound  
$T\lesssim p^{-\gamma}$ holds. Hence $\gamma_c\le \gamma$. 
\end{lemma}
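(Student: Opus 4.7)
The plan is to work with the rectangular box $B$ of dimensions $A \times n$ announced before the lemma, with the choice $A = Cp^{-s_1(r-s_2)/(s_1+1)}$ for a large constant $C$, so that $n = Ap^{-s_2} \asymp p^{-\gamma}$ and the aspect ratio $A/n = p^{s_2} \to 0$. I would place $B$ so that the origin lies in its interior and run the dynamics on $B$ with $0$-boundary; by monotonicity this gives a lower bound on the actual occupied set, so proving that the origin is occupied within time $O(p^{-\gamma})$ under the restricted dynamics with probability at least $1-\epsilon$ (for any prescribed $\epsilon>0$) yields $T \lesssim p^{-\gamma}$.

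Define a good event $G = G_1 \cap G_2$. The first part $G_1$ posits $s_1$ adjacent vertical lines in $B$, each containing an \emph{$r$-cluster} --- $r$ initial black sites within a vertical window of length $\rho$; each such bundle line then fully fills by the ``$\ge r$ vertical occupied neighbors'' rule alone. The expected number of bundle configurations in $B$ is of order $A(np^r)^{s_1} = c\,C^{s_1+1}$, so by a Poisson-type lower bound one has $\P(G_1) \ge 1-\epsilon/2$ for $C$ large. The second part $G_2$ requires adequate density and spread of \emph{$s_2$-clusters} (sets of $s_2$ initial black sites within a window of $\rho$) on the other vertical lines in $B$: concretely, that a shortest path of activations in the associated random graph (vertices are $s_2$-clusters; edges join clusters on horizontally close lines, weighted by their vertical separation) from the bundle to the line of the origin has total cost $O(n/\rho)$.

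Under $G$, the bundle's $s_1$ lines become fully occupied by time $O(n)$ via vertical extension of the $r$-clusters (speed $1$ per step). Once the bundle is active, any vertical line $L$ within horizontal distance $\rho$ inherits $s_1$ horizontal occupied neighbors from the bundle at every height; hence the nearest $s_2$-cluster of $L$ to the $r$-cluster's initial height activates as soon as the bundle reaches it (time $O(1/p^{s_2})$), after which $L$ extends at rate $\ge 1$ per step in both vertical directions. Lines further from the bundle activate via chains through already-activated neighbors, each chain step costing time $O(1/p^{s_2})$; the chain from the bundle to the origin's line has length $O(A/\rho)$ and therefore total horizontal-propagation time $O(A/(\rho p^{s_2})) = O(n/\rho)$. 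Adding the at most $O(n)$ time required for the origin's line to fill vertically from its activation height, the origin is occupied by time $O(n) = O(p^{-\gamma})$.

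The main obstacle is the rigorous control of $\P(G_2) \to 1$, which I would cast as a first-passage percolation estimate on the random $s_2$-cluster graph described above. The shortest-path time from the bundle to the origin's line is (approximately) a sum of $\sim A/\rho$ near-independent gap variables, each of mean $1/p^{s_2}$; since $A/\rho \to \infty$, a second-moment or Chebyshev-type argument concentrates this sum around its $O(n/\rho)$ mean with probability tending to $1$. With this concentration in hand, the plan above goes through and delivers $T \lesssim p^{-\gamma}$, and hence $\gamma_c \le \gamma$.
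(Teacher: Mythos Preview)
Your plan is essentially the paper's: locate a bundle of $s_1$ adjacent columns each carrying an $r$-cluster (your $G_1$ is the paper's $G_1$, with the same count $A(Ap^{-s_2}p^r)^{s_1}$), and then push occupation sideways column-by-column using $s_2$-clusters, paying expected vertical cost $\asymp p^{-s_2}$ per column so that the total time is $O(Ap^{-s_2})=O(p^{-\gamma})$.

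The only substantive difference is the bookkeeping in the propagation step. You phrase it as first-passage percolation on the $s_2$-cluster graph and defer the concentration of the path cost to a second-moment argument. The paper instead discretizes each column into cells of height $Cp^{-s_2}$, so that each cell is ``good'' (contains an $s_2$-interval) with probability close to~$1$; the staircase path from any cell back to the bundle then has length $A$ plus a sum of $A$ independent geometric variables, and the deterministic bound ``$k$ cells on the path $\Rightarrow$ time $\le hk$'' replaces your FPP estimate. This discretization buys two things you leave implicit: it makes the concentration immediate (sum of i.i.d.\ geometrics with small mean), and it transparently handles the $s_1\ge 2$ constraint, since once the previous $s_1$ columns are each occupied throughout the relevant cell, the next good cell activates --- you gloss over why a single newly-filled neighbour suffices when $s_1\ge 2$, though a short induction on the triangle inequality $|g_{k}-g_{k-2}|\le |g_{k}-g_{k-1}|+|g_{k-1}-g_{k-2}|$ shows your path-cost bound does control the time even then. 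The paper also uses an alternating above/below-midline construction to keep the staircase from drifting to the boundary of $B$, a minor point your random-walk picture would need to address.
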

\begin{proof} 
Assume that 
$A\gg p^{-(r-s_2)s_1/(1+s_1)}=p^{-(\gamma-s_2)}$. Let 
$G_1$ be the event that $B$ contains 
$s_1$ neighboring vertical 
lines, each of which contains an interval of $r$ black sites. The event $G_1$ occurs with high probability because
$$
A(Ap^{-s_2}p^r)^{s_1}\gg 1.
$$
On $G_1$, we get an occupied vertical strip of width $s_1$ in $Ap^{-s_2}$ time steps. 

\begin{figure}[ht!]
\begin{center}
\begin{tikzpicture}

\fill[lgrey] (0.75,3) rectangle (1,4);
\fill[lgrey] (1,6) rectangle (1.25,7);
\fill[lgrey] (1.25,2) rectangle (1.5,3);
\fill[lgrey] (1.5,4) rectangle (1.75,5);
\fill[lgrey] (1.75,0) rectangle (2,1);
\fill[lgrey] (2,5) rectangle (2.25,6);

\draw (0,0) grid [xstep=.25,ystep=1] (2.25,8);
\draw [color=purple, ultra thick] (2.125,1.5)--(2.125,5.5)--(1.875,5.5)--(1.875,0.5)--(1.625,0.5)--
                        (1.625,4.5)--(1.375,4.5)--(1.375,2.5)--(1.125,2.5)--(1.125,6.5)--
                        (0.875,6.5)--(0.875,3.5);
\draw [thick, ->] (2.7,1.4) -- (2.125,1.5);
\node [right] at (2.7,1.4) {$\zeta_0$};

\draw [decorate,decoration={brace}, thick] (0.75,-0.1)--(0,-0.1);
\node at (0.375,-0.42) {$S$};

\fill[dgrey] (0,0) rectangle (0.75,8);
\draw[thick] (0,0) grid [xstep=.25,ystep=8] (0.75,8);
 
\end{tikzpicture}
\end{center}
\caption{Transmission of occupation to the cell $\zeta_0$. The dark grey 
strip $S$ has a vertical interval of $r$ black sites in each of its $s_1$ 
(which is set to 3 for the purpose of this illustration) columns, 
which all get occupied. Light grey columns have thickness $1$ and are cells that each include
a vertical interval of $s_2$ black sites. All cells on the purple path get 
fully occupied, starting with the one next to $S$.} 
\label{fig-Linf-super}
\end{figure}
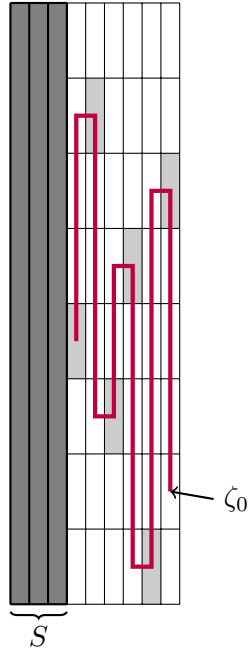

Now divide $B$ into vertical strips of height $h=Cp^{-s_2}$ and width $1$, which we call 
{\it cells\/}. We call such a cell 
{\it good\/} if it contains a vertical subinterval of length $s_2$ full of black sites. 
The probability $P_g$ that a fixed cell is good can be made arbitrarily close to $1$ if $C$ is 
chosen to be sufficiently large and then $p$ small enough. 
Connect every good cell with the two horizontally adjacent cells (good or not) and 
connect \underline{any} two vertically adjacent cells (again, good or not). 
Assume that $G_1$ happens, resulting in the occupied strip $S$
by time $T_1$. Then, any cell to the right (resp., left) of $S$ that is 
connected through a path of connected cells of length $k$ to the left (resp., right) edge of $B$ is 
completely occupied by additional time $hk$ (see Figure~\ref{fig-Linf-super}). 

Let $G_2$ be the event that the top and the bottom halves of 
every column both contain a good cell. As 
$$\prob{G_2^c}\le 
A(1-P_g)^{A/(2C)}\le A\exp(-P_gA/(2C))\to 0,$$ 
$G_2$ happens with high probability.

 Fix a cell $\zeta_0$. If $\zeta_0$ is below (resp.~above) the middle line of $B$, 
find the closest good cell $\zeta_1$ above (resp.~below) the midline in the column of $\zeta_0$; for the 
purposes of this part of the argument, we allow cells outside $B$, but this will not happen on the event $G_2$. 
Then find the first good cell $\zeta_2$ below (resp.~above) the midline in the column to the left of 
$\zeta_1$, then a good cell $\zeta_3$ in the column to the left of 
$\zeta_2$, with the same above-below convention, etc., until reaching a cell in the vertical line that contains the 
leftmost column of $B$. This constructs a path of 
connected cells whose length is bounded by $A$ plus twice the sum of at most $A$ independent Geometric($P_g$) random 
variables, where the additional $A$ accounts for the initial portion of the path from $\zeta_0$ to the midline. The probability that this length exceeds $5A$ for any fixed 
cell $\zeta_0$ is at most $\exp(-cA)$ for some constant $c>0$. Let $G_3$ be the event that 
the length of the path is at most $5A$ for {\it every\/} cell $\xi_0$.
As the number of cells within $B$ is bounded by $A^2$, 
$$
\prob{G_3^c}\le A^2\exp(-cA). 
$$
The event $G_4$ is the mirror image of the event $G_3$: every ``left'' is replaced by ``right'' and vice versa, so $\prob{G_4}=\prob{G_3}$.
If $G_1\cap G_2\cap G_3\cap G_4$ happens, then box $B$ gets fully occupied by time a constant 
times $Ap^{-s_2}$. Finally, by FKG, $\prob{G_1\cap G_2\cap G_3\cap G_4}\ge 
\prob{G_1}\prob{G_2}\prob{G_3}\prob{G_4}\to 1$.
\end{proof}

It remains to prove of the matching lower bound, thus we 
assume for the rest of this section that $A\ll p^{- (\gamma-s_2)}$. Again, the term {\it 
nearby\/} will mean within $\ell^\infty$-distance $Cr\rho$ for a suitable large constant $C$.
We start by two straightforward observations.
First, with high probability, 
the event that
\begin{itemize}
\item[(E1)] there are at most $s_1-1$ nearby vertical 
lines such that, disjointly, each line has $r$ nearby black sites that are nearby each other
\end{itemize}
happens, as now $
A(Ap^{-s_2}p^r)^{s_1}\ll 1.
$ 
Also,  
$$
|B|=\frac{A^2}{p^{s_2}}\ll p^{- \frac{2rs_1-s_1s_2+s_2}{1+s_1}},
$$
and so with high probability the event that
\begin{itemize}
\item[(E2)] there exists no set of nearby black sites of size 
$$
\left\lceil\frac{2rs_1-s_1s_2+s_2}{1+s_1}\right\rceil\le 2r
$$
\end{itemize} 
also happens. From now on, we assume that the two events (E1) and (E2) 
happen. 


Starting with the black sites, we perform the following two-stage occupation process, in which 
we call the sites that get occupied in the first 
step {\it red\/} and those that get occupied in the second step {\it blue\/}. 
Declare a vertical line $L$ to be {\it saturated\/} if it contains $r$ nearby sites, each of 
which is either black or is covered by a horizontal line that contains two 
nearby black sites that are also near $L$. We call such $r$ sites on $L$ 
{\it saturating\/} sites. In the first step we 
occupy, i.e., make red, all non-black points on all saturated vertical lines.

\begin{lemma}\label{lemma-L-inf-lb0}
No two 
saturating sites, for two different nearby vertical lines,
can be on the same horizontal line. Therefore, two different vertical lines 
cannot share black sites that make them saturated. 
\end{lemma}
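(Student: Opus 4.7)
The plan is to argue by contradiction. Suppose $L_1\ne L_2$ are nearby vertical lines, both saturated, and that saturating sites $x_1\in L_1$ and $x_2\in L_2$ lie on a common horizontal line $H$. My aim is to force a configuration of black sites incompatible with event (E1) or (E2).

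First I would unpack the saturation definition for $x_1$ and $x_2$: each is either black or its horizontal line (here $H$) contains two nearby black sites near its respective vertical line. Since $L_1$ and $L_2$ sit within $\ell^\infty$-distance $Cr\rho$, and $x_1,x_2\in H$, the notions ``near $L_1$'' and ``near $L_2$'' single out essentially the same bounded region of $H$, with the worsened constant absorbed into $C$. A short case split on which of $x_1,x_2$ is black would then enumerate the black sites on $H$ forced by the saturating roles of $x_1$ and $x_2$, tracking possible double roles (for instance $x_1$ black serving as one of the two helpers for a non-black $x_2$, or the two helper pairs of $x_1,x_2$ coinciding).

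The main step combines these black sites on $H$ with those forced by the remaining $r-1$ saturating sites of each $L_i$; by definition these sit on $r-1$ distinct horizontal lines disjoint from $H$, each contributing at least one additional nearby black site to the pool. Summing the contributions, I would show that even with the maximum possible sharing on $H$, the resulting collection of nearby black sites either contains $s_1$ nearby vertical lines each with $r$ nearby black sites (contradicting (E1)), or has cardinality exceeding $\lceil(2rs_1-s_1s_2+s_2)/(1+s_1)\rceil$ (contradicting (E2)). The main obstacle is bookkeeping: quantifying the maximum savings obtained from sharing helpers on $H$ and verifying that the residual count still exceeds the threshold in (E1) or (E2).

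Granted the first claim, the ``therefore'' is a short geometric observation. Each $L_i$'s $r$ saturating sites lie on $r$ distinct horizontal lines, and by the first claim the two $r$-collections are disjoint, so the $2r$ lines are pairwise disjoint. Any black site witnessing the saturation of $L_i$ --- whether a black saturating site, or one of the two helpers on the horizontal line through a non-black saturating site --- lies on one of $L_i$'s $r$ lines. Consequently the two collections of supporting black sites are disjoint, which is exactly the statement that $L_1$ and $L_2$ cannot share black sites making them saturated.
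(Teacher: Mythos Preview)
Your proposal is correct and follows essentially the same approach as the paper: assume a shared horizontal line, count the forced nearby black sites, and contradict (E2). The paper streamlines your case analysis by letting $a\ge 1$ be the number of shared horizontal lines and observing directly that each shared line contributes at least $2$ black sites while the remaining $2(r-a)$ saturating sites contribute at least one each, giving $2a+2(r-a)=2r$ nearby black sites and violating (E2); note that (E1) is never invoked.
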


\begin{proof}
Assume that $L_1$ and $L_2$ are two nearby saturated lines, and that
$a\ge 1$ is the number of horizontal lines that contain saturating sites 
of both $L_1$ and $L_2$, which together contain at least $2a$ nearby black sites. Further, 
the lines through the remaining $2(r-a)$ saturating sites 
($r-a$ on each of $L_1$ and $L_2$) together contain at least $2(r-a)$ black sites. This 
results in at least $2a+2(r-a)=2r$ nearby black sites, violating (E2).
(Here we used $a\ge 1$ only to force these black sites to be nearby.)
\end{proof}

In the second stage, we use the dynamics with zero-set determined by the 
single minimal count $(s_1,s_2)$, which is a critical dynamics \cite{BBMS1}. 
(Note that this is exactly the 
classic modified bootstrap percolation \cite{Hol}
when $\rho=1$ and $s_1=s_2=1$.) We start  
from the 
occupied set consisting of red and black sites. 
We use the slow 
version of this process, in which we occupy, that is, make blue, exactly one site that can 
be occupied, 
chosen arbitrarily, per time 
step, until we reach an inert configuration. Call a {\it cluster\/} a maximal connected 
set of black and blue sites, where we use the graph induced by our range $\rho$ 
neighborhood $\cN_x$ for connectivity. 
The {\it hull\/} of a cluster is the smallest rectangle that contains it. Note that hulls
of different clusters may intersect. We call a black site $x$ {\it enhanced\/} if it has 
$s_1$ red or black sites (including itself) within a horizontal interval of length $2\rho+1$ 
including $x$.

\begin{lemma}\label{lemma-L-inf-lb1} At any step of the second stage dynamics, 
every blue site has an enhanced black site in the
row of its hull, and $s_2$ black sites within a vertical interval of 
$2\rho+1$ sites in the column of its hull.
\end{lemma}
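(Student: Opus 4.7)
My plan is to prove this by induction on the step count of the second-stage dynamics, with the base case (no blue sites) vacuous. Suppose the statement holds after $k$ steps, and let $x$ be the site made blue at step $k+1$. By the rule for the dynamics with the single minimal count $(s_1,s_2)$, the site $x$ has at least $s_1$ occupied sites in $\cN_x^h$ and at least $s_2$ occupied sites in $\cN_x^v$, where ``occupied'' means black, red, or blue. Let $H$ denote the hull of the new cluster containing $x$.

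I would first settle the vertical assertion. The key observation is that no occupied vertical neighbor of $x$ can be red: red sites lie on saturated vertical lines, so a red vertical neighbor of $x$ would place $x$ on that same saturated vertical line, making $x$ itself red in the first stage, contradicting $x$ becoming blue. Hence the $\ge s_2$ occupied vertical neighbors of $x$ are all black or blue. If they are all black, they give a stack of $s_2$ black sites in a vertical interval of length $2\rho+1$ in the column of $x$, which is a column of $H$. Otherwise some vertical neighbor $y$ is blue; by the inductive hypothesis $y$'s prior cluster's hull already contains such a stack, and since merging only enlarges column ranges, the stack still sits in the column range of $H$.

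For the horizontal assertion, I would use event (E1) to bound red contributions. Any red site in $\cN_x^h$ lies on a saturated vertical line within a horizontal span of width $2\rho$, and by (E1) together with Lemma~\ref{lemma-L-inf-lb0} there are at most $s_1-1$ such lines producing distinct red sites in $x$'s row. Consequently, among the $\ge s_1$ occupied horizontal neighbors of $x$, at least one is black or blue. If it is blue, the inductive hypothesis, combined with the monotonicity of hull row ranges under cluster merging, provides an enhanced black site in the row range of $H$. Otherwise, let $y$ be a black horizontal neighbor of $x$. The interval $\cN_x^h$ is a horizontal interval of length $2\rho+1$ containing $y$ and at least $s_1$ black-or-red sites (counting $y$ itself), so $y$ is enhanced by definition, and its row is that of $x$, which lies in the row range of $H$.

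The main technical subtlety is the handling of cluster merging: when $x$ bridges several previously disjoint clusters, the new hull contains each of the old hulls, so every certificate supplied by the inductive hypothesis for an old cluster remains a valid witness inside the enlarged $H$. This monotonicity, combined with the exclusion of red sites from the vertical neighborhood of $x$ and the bound (E1) controlling red sites in its horizontal neighborhood, is what drives both inheritance arguments.
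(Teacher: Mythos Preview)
Your proof follows the same inductive approach as the paper's and is essentially correct, but there is one small wrinkle in your horizontal case split. You pick \emph{one} non-red horizontal neighbor and branch on whether it is blue or black; in the black branch you then assert that $\cN_x^h$ contains at least $s_1$ black-or-red sites, which is only guaranteed if \emph{no} horizontal neighbor is blue---otherwise some of the $s_1$ occupied horizontal neighbors could themselves be blue, and the enhancement of $y$ would not follow. The clean fix, which is what the paper does, is to split first on whether \emph{any} horizontal neighbor is blue: if so, apply the inductive hypothesis to that blue neighbor; if not, all $\ge s_1$ occupied horizontal neighbors are black or red, at least one is black by (E1) and Lemma~\ref{lemma-L-inf-lb0}, and that black site is enhanced since $\cN_x^h$ itself serves as the witnessing interval. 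With this adjustment your argument matches the paper's, and your explicit remarks on hull monotonicity under cluster merging are a welcome clarification the paper leaves implicit.
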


\begin{proof} We argue by induction: this property trivially holds initially, and then 
is maintained by addition of each blue point. Indeed, assume $x$ gets painted blue 
at some step. To prove the claim about the row of $x$, note that 
$x$ must have $s_1$ 
already colored horizontal neighbors, not all of which can be red by (E1) and 
Lemma~\ref{lemma-L-inf-lb0}. If at least one is 
blue, then the claim follows by induction; otherwise, one of them is black and thus 
is enhanced. To prove the claim about the column of $x$, note that $x$ must have 
$s_2$ already colored vertical neighbors, but now none can be red. If at least one is blue, the
claim follows by induction, and if all are black the claim follows immediately.
\end{proof}

\begin{lemma} \label{lemma-L-inf-lb2}
The second-stage dynamics never produces a hull with either dimension exceeding $b_0=2r\rho$.
\end{lemma}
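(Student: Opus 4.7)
The plan is to argue by contradiction. Suppose at some step of the second-stage process a cluster $C$ has hull $H$ with a dimension exceeding $b_0 = 2r\rho$; by the essentially symmetric roles of rows and columns after swapping the two clauses of Lemma~\ref{lemma-L-inf-lb1}, I may without loss of generality assume the vertical dimension satisfies $h_v > 2r\rho$. The strategy is to produce at least $N := \lceil(2rs_1 - s_1 s_2 + s_2)/(1 + s_1)\rceil$ pairwise nearby black sites inside $H$, contradicting event (E2).

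First I would exploit the cross-neighborhood connectivity of $C$: every edge of the induced graph on $C$ moves at most $\rho$ in each coordinate, and since the vertical diameter exceeds $2r\rho$ the cluster $C$ must intersect at least $2r+2$ distinct rows. In each such row, Lemma~\ref{lemma-L-inf-lb1}, applied to a blue cluster site in that row (or trivially if the cluster site is already black), provides an enhanced black site within $H$. This yields $2r+2$ distinct black sites in $H$, one per row, and since by the bound from (E2) we have $N \le 2r < 2r+2$, this count already exceeds the number forbidden by (E2). The remaining issue is to ensure the chosen black sites are \emph{pairwise nearby}, i.e., at $\ell^\infty$-distance at most $Cr\rho$, so that (E2) actually applies.

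This nearness check is the principal obstacle, because the hull may a priori have large horizontal extent. I would handle it by a two-stage bound: first, a preliminary argument combining Lemmas~\ref{lemma-L-inf-lb0}--\ref{lemma-L-inf-lb1} with (E2) shows both dimensions of the hull are at most $Cr\rho$ (using that the total number of black sites supporting $C$ is at most $N-1$ by (E2), and each such black site supports only a bounded window of cluster sites via the $s_2$-per-column and enhanced-row clauses of Lemma~\ref{lemma-L-inf-lb1}, together with the constraint from (E1) that red support is drawn from at most $s_1-1$ common saturated vertical lines). Once both dimensions are at most $Cr\rho$, the $2r+2$ enhanced black sites lie inside a single $Cr\rho \times Cr\rho$ window, hence are pairwise nearby and directly contradict (E2). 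The horizontal case is handled analogously using the column clause of Lemma~\ref{lemma-L-inf-lb1} in place of the row clause; in both cases the arithmetic identity $N(1+s_1) \approx 2rs_1 - s_1s_2 + s_2$ encodes exactly the balance between the $2r+2$ rows supplied by vertical connectivity, the $\le s_1-1$ saturated columns allowed by (E1), and the $s_2$ per-column black support supplied by Lemma~\ref{lemma-L-inf-lb1}, which is what makes the inequality $2r+2>N$ tight enough for the contradiction to close.
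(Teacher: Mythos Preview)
Your argument has a genuine gap at the ``two-stage bound'' step, and it is precisely the step where the real content of the proof lies.

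You correctly identify the obstacle: having $\ge 2r$ black sites in the hull only contradicts (E2) if those sites are pairwise nearby, i.e., lie in a box of side $Cr\rho$. Your proposed fix is a ``preliminary argument'' that both hull dimensions are at most $Cr\rho$, using that (E2) bounds the number of black sites supporting $C$. But this is circular: (E2) only forbids $N$ \emph{nearby} black sites, not $N$ black sites scattered in a hull of unknown size. You cannot invoke (E2) until you already know the hull is small, which is exactly what the lemma asserts. The vague appeal to ``each black site supports only a bounded window of cluster sites'' does not rescue this, because without an a priori bound on the hull nothing prevents many far-apart black windows from being chained together into a single huge cluster.

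The paper avoids this circularity with a first-violation argument, which is the idea you are missing. Consider the first step $t$ at which some hull dimension exceeds $2r\rho$. At step $t-1$ every hull has both dimensions $\le 2r\rho$. The single blue site added at step $t$ has a cross neighborhood of range $\rho$, so it can touch at most four previous clusters; the resulting merged hull therefore has each dimension at most $4\cdot 2r\rho + 2\rho + 1 \le 11r\rho$. Now you have both a lower bound ($>2r\rho$) and an upper bound ($\le 11r\rho$) on the violating hull, so the $\lceil b/\rho\rceil \ge 2r$ black sites furnished by Lemma~\ref{lemma-L-inf-lb1} all lie in an $11r\rho\times 11r\rho$ box and are nearby, contradicting (E2). (A minor point: your count of ``$2r+2$ distinct rows'' from vertical extent $>2r\rho$ is slightly off; connectivity with range $\rho$ only yields $\ge 2r+1$ occupied rows, but that is still enough.)
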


\begin{proof}
Assume that $b$ is the largest dimension of a hull at time $t$, the first time 
the claim is violated. Such a hull 
contains at least $\lceil b/\rho\rceil$ black sites, by Lemma~\ref{lemma-L-inf-lb1}. 
Therefore, the claim holds initially, before any blue sites 
are created. Assume now that $t>0$ and let $x$ be the site 
that becomes blue at time $t$. As $x$ is connected 
to at most $4$ clusters at step $t-1$, 
$$2r\rho\le b\le 4\cdot 2r\rho+2\rho+1\le 11r\rho.$$ 
It follows that we have at least $2r$ black 
sites in a $11r\rho\times 11r\rho$ box, contrary to (E2).
\end{proof}


It follows from the above two lemmas that, for every blue site $x$, there are $s_2$ black sites 
in the vertical line through $x$ within an interval of size $2\rho r$ containing 
$x$. Also, there are at least 
$s_1$ red or black sites, at least one of which is black, on the horizontal line 
through $x$ within an interval of the same size. This will enable us to prove the 
key property of the colored configuration. 

\begin{lemma}\label{lemma-L-inf-lb3} The configuration consisting of black, red, and blue sites 
on $B$ is, with high probability, inert. 
\end{lemma}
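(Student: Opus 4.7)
Plan. The argument splits according to the two ways a still-white site $x$ could be added by the original dynamics: either (i) its counts exceed the minimal pair $(s_1,s_2)$, or (ii) its vertical count alone reaches $r$. Case (i) is immediate, since the second-stage (blue) dynamics is precisely the rule with single minimal count $(s_1,s_2)$ run to an inert configuration; so no white $x$ can have $\ge s_1$ colored horizontal and $\ge s_2$ colored vertical neighbors once stage two has terminated.

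For case (ii), let $y_1,\ldots,y_r\in\cN_x^v$ be the $r$ colored vertical neighbors of $x$. First, none of the $y_i$ can be red: if $y_i$ is red, its vertical line—which coincides with $x$'s—was saturated in the first stage, so $x$ itself would have been colored red, contradicting that $x$ is white. Second, if all $y_i$ were black, the line through $x$ would trivially meet the first-stage saturation condition via the ``all black'' branch of its definition, and $x$ would again be red. Hence at least one $y_i$ is blue and none is red, and the task reduces to forcing a contradiction from the joint presence of these blue neighbors of $x$.

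For each blue $y_i$, Lemmas~\ref{lemma-L-inf-lb1} and~\ref{lemma-L-inf-lb2} furnish, within $O(r\rho)$ of $x$, an enhanced black site in some row of the hull of $y_i$'s cluster together with $s_2$ black sites in a vertical interval of some column of that hull. Distinct clusters contribute disjoint sets of black sites. My plan is to tally these black sites, together with the originally black $y_j$'s, and show that the nearby black-site count is forced to reach the threshold $\lceil(2rs_1-s_1s_2+s_2)/(1+s_1)\rceil$, directly contradicting the event (E2). The event (E1) enters to cap, for each enhanced site $z$, the number of its auxiliary $s_1-1$ red/black horizontal neighbors that can actually be red: each such red site sits on a distinct nearby saturated vertical line, and (E1) forbids more than $s_1-1$ of these near $x$. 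When $s_1\ge 2$, this extra information forces a second nearby black site on a horizontal line through the hull's enhanced row, which in fact verifies that the vertical line of $x$ is saturated and so $x$ would already be red—another route to the contradiction.

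The main obstacle will be the combinatorial bookkeeping in case (ii): one must avoid double-counting when several blue $y_i$'s share a cluster, when the $s_2$-column of a hull coincides with the column of $x$ so that hull-supplied black sites overlap the originally black $y_j$'s, and when the enhanced-row horizontal intervals of different clusters overlap. The exponent in (E2) is tuned exactly so that, once these identifications are accounted for via (E1) and the case-by-case overlap analysis, the resulting lower bound on the number of nearby black sites strictly exceeds $(2rs_1-s_1s_2+s_2)/(1+s_1)$ in every subcase.
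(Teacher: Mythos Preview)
Your case split and the handling of case (i) are correct, and your reduction in case (ii) (no $y_i$ is red, at least one is blue) is exactly right. The gap is in your intended endgame for case (ii) when $s_1\ge 2$.

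Your plan is to collect enough black sites in an $O(r\rho)$-neighborhood of $x$ to violate (E2). But near $x$ you can only guarantee roughly $r$ black sites: the $r-i$ black $y_j$'s on the vertical line through $x$, together with one black site (the enhanced one) on the row of each of the $i$ blue $y_j$'s. The $s_1-1$ horizontal companions of each enhanced site may all lie on the $s_1-1$ nearby saturated vertical (red) lines, so they need not be black; and the $s_2$ column-black sites supplied by Lemma~\ref{lemma-L-inf-lb1} for each blue $y_j$ all lie on the \emph{same} vertical line as $x$ and typically coincide with the black $y_j$'s. Crucially, the black sites that witness the \emph{saturation} of those $s_1-1$ red lines can sit anywhere along the full height $Ap^{-s_2}$ of $B$ and are therefore not ``nearby'' $x$ in the (E2) sense. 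So for $s_1\ge 2$ the count stalls at $r$, while the (E2) threshold is $\lceil(2rs_1-s_1s_2+s_2)/(1+s_1)\rceil>r$ (e.g.\ for $s_1=2$, $s_2=1$, $r=3$ the threshold is $4$, and a single red line with far-away saturating sites together with three local black sites realizes a non-inert picture with only $3$ black sites near $x$). For the same reason your alternative route---forcing two black sites on each blue row to saturate $x$'s vertical line---fails: with $j=s_1-1$ red lines the enhanced interval may contain exactly one black site.

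The paper does not try to contradict (E2) at this final step. Instead it argues (as you partly anticipate) that $j$ must equal $s_1-1$, identifies $r$ ``assisting'' black sites near $x$ (one per row of the $y_j$'s), and then uses (E2) only to show these are \emph{disjoint} from the black sites saturating the $s_1-1$ red lines. The conclusion is that the non-inert event forces $s_1-1$ nearby vertical lines to be saturated disjointly together with a disjoint nearby set of $r$ black sites, and the probability of this configuration is bounded directly by a constant times $A(Ap^{-s_2}p^r)^{s_1-1}\cdot Ap^{-s_2}p^r\ll 1$. That probabilistic bound, not a local black-site count, is what closes case (ii).
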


\begin{proof}
Assume that, after the second stage, the configuration of the colored sites is not inert. Then
there is a non-red vertical line that contains $r$ black or blue sites within a vertical 
interval of size $2\rho+1$. Of these, at least one is blue (or the line would become red 
in the first stage) and therefore, by Lemma~\ref{lemma-L-inf-lb1},  
there are at least $s_2$ black sites within
a vertical interval of size $5r\rho$. 
So within such an interval, we can find some $i \in [1,r-s_2]$
blue sites and $r-i$ black sites. Assume also that there are
$j \in [0,s_1-1]$ red lines near this line. Each of the $i$ blue sites
has to have $s_1-j$ black sites on the horizontal line through it, nearby but not on the red lines; 
call these black sites, together with the $r-i$ black sites, {\it assisting\/}. Observe that we 
cannot have $j\le s_1-2$, as then again the line would be saturated at stage 1. Therefore, 
$j=s_1-1$.

Now we claim that none of the assisting black sites can be on the same horizontal line 
as a saturating site of one of the $j$ red lines. (Recall that different nearby
red lines do not have saturation sites on the same horizontal line by Lemma~\ref{lemma-L-inf-lb0}.) 
Indeed, if this were true, we would 
again have $a\ge 1$ horizontal lines that each cover an assisting and a saturating site. 
Each of these contains $2$ black sites, and additionally we have 
at least $2(r-a)$ black sites on the horizontal lines through saturation sites
and through assisting sites, and they are all nearby. This would again produce $2r$ nearby 
black sites, demonstrating that the black sites involved in saturating the lines and 
assisting sites are disjoint. 

It follows that we have $s_1-1$ nearby red lines that get saturated disjointly and, 
also disjointly, a nearby set of $r$ nearby black sites. The probability for 
this is at most a constant times  
$$
A(Ap^{-s_2}p^r)^{s_1-1} Ap^{-s_2}p^r\ll 1.
$$
Consequently, the probability that the configuration of colored sites 
is not inert converges to $0$.
\end{proof}

We have so far dealt with the dynamics on $B$, and obtained an inert configuration, 
which has low density, as we will observe in the proof of Lemma~\ref{lemma-L-inf-lb6}. 
However, this is not sufficient 
in this instance. The reason is that the box $B$ only has width $A$, and the simple ``speed
of light'' argument of Lemma~\ref{lemma-lb-easy} only gives a lower bound on the order 
of $A$, which is far smaller than the claimed size of $T$. We need to provide a different 
argument, a probabilistic one, that demonstrates that the influence from outside $B$ likely
spreads horizontally with speed no larger than on the order of $p^{s_2}$. 

To achieve this, we 
assume we have $A\times (A/p^{s_2})$ box $B$ with origin in its center,
and enlarge the initial configuration by replacing it with the occupied 
set of black, red, and blue sites 
produced by the two-stage procedure above. For this part, we will only need 
that every blue site has $s_2$ black sites within some
distance $R=R(\rho)$. We make all sites outside of the box occupied and call 
them {\it green\/}. Now run the original dynamics with the zero-set $\cZ$ on the resulting 
configuration and also call all newly occupied sites green. We claim that 
there exist an $\epsilon>0$ so that the origin turns green 
by time $\epsilon/p^{s_2}$ with probability approaching $0$. We call a sequence 
$x_0, x_1,..., x_n$ a {\it neighborhood path\/} if $x_{i}\in \cN_{x_{i-1}}$ for $i=1,\ldots, n$.
 
\begin{lemma}\label{lemma-L-inf-lb4} Assume that some site $x$ in $B$ turns green at time $t\ge 1$. 
Then there exists
a neighborhood path $x_0, x_1,..., x_n=x$ of length $t$, so that $x_0$ is
outside $B$ and for every horizontal step from $x_i$ to $x_{i+1}$ there are
$s_2$ black sites within distance $2R$ of $x_{i+1}$.
\end{lemma}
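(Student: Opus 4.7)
The plan is to induct on $t$, working on the high-probability event (furnished by Lemma~\ref{lemma-L-inf-lb3}) that the initial colored configuration is inert in $B$. Inertness forces any $x \in B$ that becomes green at time $t \geq 1$ to admit at least one genuinely green neighbor at time $t-1$, for otherwise the counts contributed by colored neighbors alone would already place $x$ outside $\cZ$, contradicting inertness.

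Two structural facts underpin the argument. First, both minimal counts $(0,r)$ and $(s_1,s_2)$ of $\cZ$ have vertical coordinate at least $s_2$, so every newly occupied $x$ satisfies $|\cN_x^v \cap \xi_{t-1}| \geq s_2$. Second, since $x$ is not colored, the vertical line through $x$ cannot be saturated—otherwise $x$ would be red after the first stage—so every colored vertical neighbor of $x$ is either black or blue.

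For the base case $t=1$, the inert colored configuration forces a green initial neighbor $y$ of $x$, necessarily outside $B$. I would prefer a vertical $y$, in which case the single vertical step imposes no condition. Failing that, $y$ is horizontal, and I must verify the $s_2$-black-sites condition: in this sub-case all $\geq s_2$ vertical occupied neighbors of $x$ are colored and are black or blue by the second structural fact; if any is blue, Lemmas~\ref{lemma-L-inf-lb1} and~\ref{lemma-L-inf-lb2} provide $s_2$ black sites within an $\mathcal O(r\rho)$ window of that blue site, and hence within $2R$ of $x$ once $R$ is chosen sufficiently large, while if all are black they themselves suffice. For the inductive step $t \geq 2$, I pick a green neighbor $y$ of $x$ at time $t-1$, again preferring a vertical one. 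Letting $s \leq t-1$ be the first time $y$ was green, the inductive hypothesis supplies a path of length $s$ from outside $B$ to $y$ with the required horizontal-step property. I pad this path with $t-1-s$ stationary neighborhood steps at $y$ (using $y \in \cN_y$) and append the step $y \to x$, producing a path of length exactly $t$; the horizontal sub-case at the final step is handled by the same black-or-blue argument used in the base.

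The main obstacle is purely the distance bookkeeping: the hull containing a blue vertical neighbor has diameter up to $b_0 = 2r\rho$, and Lemma~\ref{lemma-L-inf-lb1} places the guaranteed $s_2$ black sites in a vertical $(2\rho+1)$-window inside its column, so one must verify that a single constant $R = R(\rho,r)$ can be chosen (independently of $p$) that makes the whole configuration fit within $2R$ of $x$. Once this constant is fixed, together with the convention that stationary steps impose no condition, the induction is routine.
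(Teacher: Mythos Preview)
Your approach is essentially the paper's: induct on $t$, use inertness of the colored configuration to force a green neighbor, prefer a vertical one, and in the horizontal case use that $x$'s vertical line is not saturated so its $\ge s_2$ vertical occupied neighbors are black or blue, with a single blue neighbor already supplying the $s_2$ black sites via the hull bounds. Your version is in fact a bit more careful than the paper's terse write-up: you make explicit the observation that the shape of $\cZ$ forces $|\cN_x^v\cap\xi_{t-1}|\ge s_2$, you spell out the base case, and you introduce stationary padding steps at $y$ to get a path of length exactly $t$ (the paper's induction glosses over the possibility that the chosen green neighbor turned green strictly before $t-1$). Interpreting the stationary step as a vertical move so that it carries no horizontal condition is the natural reading and is harmless for the downstream Lemma~\ref{lemma-L-inf-lb5}.
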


\begin{proof} We argue by induction. Due to inertness of other colors, $x$ must have a green
neighbor at time $t-1$. If that neighbor is vertical, the induction step
is trivial. Otherwise, $x$ must have a horizontal green neighbor, and also
$s_2$ non-red vertical neighbors (because $x$ itself is not red). If all of these 
vertical neighbors are black, the induction step is completed. If one of them is blue, it has 
$s_2$ black sites within distance $R$, which again completes the induction step.
\end{proof} 

The problem with the path in the previous lemma is that 
the same black sites may be used at many steps, so we 
need to construct a path that does not do that. For that, we abandon the 
``neighborhood'' assumption. We say that a path 
$y_0, ..., y_n=x$ makes a {\it lateral move\/}
from $y_i$ to $y_{i+1}$ if $||y_i-y_{i+1}||\le 5R$. Thus, a lateral move is allowed to be in
any direction within the restricted distance.

\begin{lemma} \label{lemma-L-inf-lb5} 
Let $x$ be as in
Lemma~\ref{lemma-L-inf-lb4}. Then there exists a path $y_0,\ldots, y_n=x$ of
length $n\le \rho t$ with the following properties:
\begin{itemize}
\item it is self-avoiding; 
\item it makes vertical neighborhood moves of
unit distance; and
\item if there is a
lateral move from $y_i$ to $y_{i+1}$, then there are $s_2$ black sites 
within distance $2R$ of $y_{i+1}$; and
\item all black sites
at different lateral moves are distinct.
\end{itemize} 
\end{lemma}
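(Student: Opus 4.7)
The plan is to start from the neighborhood path $x_0,\ldots,x_n=x$ supplied by Lemma~\ref{lemma-L-inf-lb4} and refine it in three stages. First, I will decompose each of the at most $t$ original steps into unit pieces: a vertical step of distance $d\le\rho$ becomes $d$ successive vertical unit moves, while a horizontal step is retained as a single ``lateral move'' of length at most $\rho$, which is at most $5R$ for a suitable choice of $R$. The resulting path $z_0,\ldots,z_N$ has $N\le\rho t$ steps, and Lemma~\ref{lemma-L-inf-lb4} guarantees that every lateral move ends at a point with $s_2$ black sites within distance $2R$.

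Second, I will iteratively remove loops: whenever the current path visits the same vertex twice, say $y_i=y_j$ with $i<j$, I excise $y_{i+1},\ldots,y_j$. This preserves the endpoint and the step-type classification (vertical unit step versus lateral move), and can only shorten the path; the output is self-avoiding.

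Third, and this is the main obstacle, I need to exhibit, for each lateral move, a specific set of $s_2$ witness black sites such that the witness sets attached to distinct lateral moves are pairwise disjoint. I will process the lateral moves in the order they occur, greedily claiming $s_2$ previously unclaimed black sites within distance $2R$ of each endpoint. If this ever fails at a lateral move with endpoint $y_{j+1}$, then because at least $s_2$ black sites are available within distance $2R$ of $y_{j+1}$, some earlier lateral move with endpoint $y_{i+1}$ must have already claimed one of them; in particular $\|y_{i+1}-y_{j+1}\|\le 4R$. I then replace the entire subpath $y_{i+1},\ldots,y_{j+1}$ by a single lateral move from $y_{i+1}$ to $y_{j+1}$ of length at most $4R\le 5R$, re-run Stage 2 to restore self-avoidance, and restart Stage 3 on the resulting shorter path. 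Each such merge strictly decreases the number of lateral moves, so the procedure terminates.

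The technical part to verify carefully is that the merging in Stage 3 preserves the invariants: self-avoidance is restored by rerunning Stage 2, the bound $n\le \rho t$ is preserved because merging only shortens, and the step-type structure (vertical unit moves plus lateral moves) is clearly maintained. An equivalent and cleaner framing uses Hall's theorem on the bipartite graph whose edges connect each lateral move to the black sites within distance $2R$ of its endpoint; any deficient subset corresponds to a cluster of endpoints of diameter at most $4R$, whose spanning subpath can be collapsed to a single lateral move of length at most $5R$. Either way, once the endpoints of distinct lateral moves have been made pairwise at distance greater than $4R$, their $2R$-neighborhoods are disjoint, disjoint witness sets exist automatically, and all four properties demanded by the lemma hold simultaneously.
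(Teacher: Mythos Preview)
Your Stage~3 has a genuine gap. After you replace $y_{i+1},\ldots,y_{j+1}$ by the single step $y_{i+1}\to y_{j+1}$, \emph{both} conflicting endpoints $y_{i+1}$ and $y_{j+1}$ survive on the path, still within distance $4R$ of one another, and the lateral move ending at $y_{i+1}$ is untouched. When you restart Stage~3 the move to $y_{i+1}$ can again claim the very black site that caused the failure, and the greedy fails at $y_{j+1}$ exactly as before. Concretely, if $j=i+1$ (two consecutive lateral moves sharing a witness) your ``merge'' is the identity map and the number of lateral moves does not drop, so the termination claim is false. The Hall-theorem variant has the analogous defect: a minimal deficient subset of endpoints need not have diameter at most $4R$---it can be a chain of overlapping $2R$-balls of much larger total diameter---so it cannot be collapsed to a single lateral move of length $\le 5R$.

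The paper avoids this by organising the collapsing around black sites rather than around pairwise conflicts: for each black site $z$ it forms $X_z=\{x_i:\text{the horizontal step to }x_i\text{ uses }z\}$, which has diameter at most $4R$, and excises the portion of the path from the first element of $X_z$ up to (but not including) the last. After one pass over all black sites, each $z$ witnesses at most one surviving lateral endpoint, which is exactly the disjointness required; termination is automatic because each $z$ is processed once. To salvage your iterative scheme you would have to merge away one of the two conflicting \emph{endpoints} (not just the segment between them), but then lateral-move lengths can grow under iteration and the $5R$ bound is lost; the black-site-indexed collapse is the clean way out.
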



\begin{proof} Take the path $x_0,\ldots,x_n$ from the previous lemma 
let the associated set of black sites be $Z$. For every
$z\in Z$, let $X_z$ be the set of sites $x_i$ such that there is a 
horizontal step to $x_i$ which has $z$ as one of the requisite neighboring black sites. 
All elements of $X_z$ are within distance
$4R$ of each other. The $y$-path is constructed as follows. For every $X_z$, let 
$x_\ell$ be the the first site on the path in $X_z$  
and $x_m$ the last.  Now eliminate 
from the path all sites from $x_\ell$ up to, but not including, $x_m$.
The new path has steps bounded by $5R$: the step from $x_{\ell-1}$ to $x_\ell$ is 
bounded by $\rho$ and the distance
from $x_\ell$ to $x_m$ is bounded by $4R$. Moreover, any vertical segment of the path can be replaced
by a sequence of unit moves, all in the same direction, which lengthens the path by at
most a factor $\rho$.
\end{proof}

We now have everything in place for the last step in the proof of Theorem~\ref{thm-L-infinite}.

\begin{lemma}\label{lemma-L-inf-lb6} The lower bound $T\gtrsim p^{-\gamma}$ holds. Hence
$\gamma_c\ge \gamma$.
\end{lemma}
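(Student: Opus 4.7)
The plan is to show $\prob{T\le t}\to 0$ for every sequence $t\ll p^{-\gamma}$. Write $t=p^{-(\gamma-\delta)}$ and fix a small $\delta\in(0,\gamma-s_2)$ (the case of larger $\delta$ follows from that of smaller $\delta$, and $\gamma-s_2=s_1(r-s_2)/(1+s_1)>0$). I will then choose $\delta'\in(0,\delta)$ and set $A=p^{-(\gamma-s_2-\delta')}$, so that $A\to\infty$ and $A\ll p^{-(\gamma-s_2)}$, activating all the preceding lemmas of this section. By monotonicity of the update rule, filling $\bZ^2\setminus B$ at time $0$ only enlarges the dynamics, and the enlarged occupied set at time $t$ equals the colored set (black$\cup$red$\cup$blue on $B$) together with all sites that have turned green by time $t$. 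Hence $\{T\le t\}\subseteq\{\mathbf 0\in\text{colored}\}\cup\{\mathbf 0\text{ green by time }t\}$, so it suffices to prove that both probabilities vanish.

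For the colored event, decompose by color: $\prob{\mathbf 0\text{ black}}=p\to 0$; $\mathbf 0$ is red only if its vertical column contains a saturated window, which by a union bound contributes $\mathcal O(Ap^{-s_2}\cdot p^r)=\mathcal O(p^{r-\gamma+\delta'})\to 0$ because $\gamma<r$ (which follows easily from $s_2<r$); and by Lemmas~\ref{lemma-L-inf-lb1} and~\ref{lemma-L-inf-lb2}, $\mathbf 0$ blue forces $s_2$ black sites distinct from $\mathbf 0$ inside a bounded vertical interval around the origin, an event of probability $\mathcal O(p^{s_2})$.

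The main work is the green event, and here Lemma~\ref{lemma-L-inf-lb5} is essential: if $\mathbf 0$ is green by time $t$, there exists a self-avoiding path $y_0,\dots,y_n=\mathbf 0$ of length $n\le \rho t$, whose steps are either unit vertical moves or lateral moves of $\ell^\infty$-size $\le 5R$, with each lateral move requiring $s_2$ black sites near its target and these $s_2$-sets pairwise disjoint across moves. The choice $\delta'<\delta$ makes $Ap^{-s_2}/2=\tfrac12 p^{-(\gamma-\delta')}$ much larger than the maximum vertical reach $5Rn\le 5R\rho p^{-(\gamma-\delta)}$, so $y_0$ cannot lie outside $B$ vertically; hence $y_0$ must lie outside $B$ horizontally, forcing the number $k$ of lateral moves to satisfy $k\ge K:=A/(10R)$. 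The main obstacle to counting such paths is that a naive encoding uses $2^n$ vertical signs, which would swamp the $(p^{s_2})^k$ decay; I resolve this by noting that self-avoidance makes each of the $k+1$ vertical runs between consecutive lateral moves monotone, so (given the terminal condition $y_n=\mathbf 0$) the path is fully specified by the $\binom{n}{k}$ positions of the lateral moves, the $2^{k+1}$ direction signs of the vertical runs, and the $\mathcal O(R^2)^k$ lateral displacements.

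Combining, the expected number of such paths is at most $\sum_{n\le \rho t}\sum_{k\ge K}^n \binom{n}{k}(C'p^{s_2})^k$ for a suitable constant $C'$. Using $\binom{n}{k}(C'p^{s_2})^k\le (eC'np^{s_2}/k)^k$ and the key estimate $eC'np^{s_2}/K=\mathcal O(p^{\delta-\delta'})\to 0$ (which exploits $\delta>\delta'$), the tail with $k\ge K$ is geometric and of order $\exp(-c\,p^{-(\gamma-s_2-\delta')}\log p^{-1})$, which is super-polynomially small in $p^{-1}$ and dominates any polynomial prefactor from the sum over $n$. Markov's inequality then gives $\prob{\mathbf 0\text{ green by time }t}\to 0$. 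Together with the colored-set bound this yields $T\gtrsim p^{-\gamma}$, and combined with Lemma~\ref{lemma-L-inf-ub} this gives $\gamma_c=\gamma$, proving Theorem~\ref{thm-L-infinite}.
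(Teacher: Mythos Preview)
Your argument follows the paper's approach closely: dominate the true dynamics by the inert colored configuration on $B$ with green exterior, then use the lateral-move path of Lemma~\ref{lemma-L-inf-lb5} together with the disjointness of the $s_2$-black-site witnesses to show the origin is unlikely to turn green. Your path-counting is a bit more explicit than the paper's (you spell out the $2^{k+1}$ factor for the directions of the vertical runs, which the paper silently absorbs into its constant $C$), and your bound $\mathcal O(p^{s_2})$ for the blue event is sharper than the paper's $o(1)$.

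There is one small slip in the parametrization. By writing $t=p^{-(\gamma-\delta)}$ with a \emph{fixed} $\delta>0$, you literally prove only that $\gamma$ is a lower power, i.e.\ $\prob{T\le p^{-(\gamma-\delta)}}\to 0$ for every fixed $\delta$; this does not cover sequences like $t=p^{-\gamma}/\log(1/p)$, so it falls short of the purity claim $T\gtrsim p^{-\gamma}$ stated in the lemma. The paper handles this by letting $A$ be an arbitrary sequence with $A\ll p^{-(\gamma-s_2)}$ and taking $t=\epsilon A p^{-s_2}$ for a fixed small constant $\epsilon$. Your proof adapts with no new ideas: for an arbitrary $t\ll p^{-\gamma}$, simply choose $A$ with $tp^{s_2}\ll A\ll p^{-(\gamma-s_2)}$ (possible since $tp^{s_2}\ll p^{-(\gamma-s_2)}$); then your key ratio $eC'np^{s_2}/K=\mathcal O(tp^{s_2}/A)\to 0$ and the speed-of-light bound $5R\rho t\ll Ap^{-s_2}$ both still hold, and the rest of your argument goes through unchanged.
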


\begin{proof}
The probability that there exists a path from Lemma~\ref{lemma-L-inf-lb5} of length $s$ that makes 
$j$ lateral moves is at most
$$
\binom sj C^j p^{s_2j}\le \left(\frac {Cesp^{s_2}}{j}\right)^j,
$$
where $C=C(R)$ is a constant, the number of choices for a move at a lateral step (and those
are the only steps at which we have a choice) times the number of possible positions of
a black site at a lateral step.

Let $D$ be a large constant, $D>2Ce\rho$. Assume $s\le \rho t$. The probability that there
exists a path from Lemma~\ref{lemma-L-inf-lb5} of length $s$ that makes at 
least $Dtp^{s_2}$ lateral moves then is
at most
$$
\sum_{j\ge Dtp^{s_2}} \left(\frac {Cesp^{s_2}}{j}\right)^j \le \sum_{j\ge Dtp^{s_2}} \left(\frac {Ce\rho}{D}\right)^j\le 2 (0.5)^{Dtp^{s_2}}.
$$
Now the probability that there exists a path of length at most $\rho t$ that
makes at least $Dtp^{s_2}$ lateral moves is at most
$$
2\rho t (0.5)^{Dtp^{s_2}}.
$$
Now let $t=\epsilon Ap^{-s_2}$, where $\epsilon=1/(20 DR)$. A path from Lemma~\ref{lemma-L-inf-lb5}, 
with $x$ the origin,
would have to make at least $A/(10R) =2Dtp^{s_2}$ lateral moves. Note that
it is impossible to connect to initially
green sites through the top or the bottom of the box, due to the speed of light. Therefore, 
the probability that such a path exists is at most
$$
2\rho t (0.5)^{A/(20R)}\to 0.
$$
It follows that the origin is not green by time $t$ with high probability.

Finally, we verify that the origin is also unlikely to be any other color. 
First, we recall that the probability that the origin is black is $p$. Next, 
the probability that it is red is 
$\cO(Ap^{r-s_2})$ by definition of a red line. Finally, the probability that it is blue is
$o(1)$ by Lemma~\ref{lemma-L-inf-lb2}.
\end{proof}

\begin{proof}[Proof of Theorem~\ref{thm-L-infinite}]
The conclusion follows from Lemmas~\ref{lemma-L-inf-ub} and~\ref{lemma-L-inf-lb6}.
\end{proof}

\section{Open Problems}\label{sec-open}
\begin{enumerate}
\item Does the critical power $\gc$ exist for all zero-sets $\cZ$? 
\item For the symmetric line growth of Theorem~\ref{thm-line} with $r=s\ge 3$, is there  
a matching lower bound to the upper bound in Lemma~\ref{lemma-line-sym-upper}, i.e., is it true that 
$t\ll {p^{-(r-1)}}\log\frac1p$ implies that $\prob{T\le t}\to 0$?
\item For the line growth of Theorem~\ref{thm-line} with $1<s<r$, is the critical power pure?
\item For the L-shaped zero-set of part 3 of Theorem~\ref{thm-L-finite}, is $\gc=r-\cO(1)$ 
if $s=\Theta(r)$, for large $r$?
\item What is the critical power for the zero-set which is the union of the bootstrap percolation 
zero-set with threshold $r$, and an infinite strip of width $s<r$? Thus, in this rule, 
the condition for occupation is that the total number of occupied neighbors is at least $r$, 
and the number of vertical occupied neighbors is at least $s$. 
\item As in \cite{GSS}, consider the dynamics with zero-set $\cZ$ on a $M\times N$ box $B$, 
where $\log M/\log p\to \alpha$ and $\log N/\log p\to \beta$. 
Call $I=I(\alpha,\beta,\cZ)$ the {\it large deviation rate\/} if 
$\log\prob{\xi_\infty=B}/\log p\to I$. When can the large deviation rate be shown to exist and when 
can it be computed?

\end{enumerate}
 
\section*{Acknowledgments} 
JG was partially
supported by the Slovenian Research Agency research program
P1-0285 and  Simons Foundation Award \#709425. DS was partially supported by the NSF TRIPODS grant CCF--1740761. LW was supported by the Ohio 5-OSU Summer Undergraduate Research Experience (SURE) program.

\end{document}